\tikzset{
  ncone/.pic={
	\draw (0,0)--(0,0.2);
  }
}
\tikzset{
  nctwo/.pic={
    \draw (0,0)--(0,0.2);
	\draw (0.1,0)--(0.1,0.2);
  }
}
\tikzset{
  nctwoW/.pic={
    \draw (0,0.2)--(0,0)--(0.1,0)--(0.1,0.2);
  }
}
\tikzset{
  nctwoWW/.pic={
    \draw (0,0.2)--(0,0)--(0.2,0)--(0.2,0.2);
  }
}
\tikzset{
  ncthreeWW/.pic={
    \draw (0,0.2)--(0,0)--(0.3,0)--(0.3,0.2);
	\draw (0.2,0)--(0.2,0.2);
  }
}
\tikzset{
  ncthree/.pic={
    \draw (0,0)--(0,0.2);
	\draw (0.1,0)--(0.1,0.2);
	\draw (0.2,0)--(0.2,0.2);
  }
}
\tikzset{
  ncthreeW/.pic={
    \draw (0,0.2)--(0,0)--(0.2,0)--(0.2,0.2);
	\draw (0.1,0)--(0.1,0.2);
  }
}
\tikzset{
  ncfour/.pic={
    \draw (0,0)--(0,0.2);
	\draw (0.1,0)--(0.1,0.2);
	\draw (0.2,0)--(0.2,0.2);
	\draw (0.3,0)--(0.3,0.2);
  }
}
\tikzset{
  ncfive/.pic={
    \draw (0,0)--(0,0.2);
	\draw (0.1,0)--(0.1,0.2);
	\draw (0.2,0)--(0.2,0.2);
	\draw (0.3,0)--(0.3,0.2);
	\draw (0.4,0)--(0.4,0.2);
  }
}
\tikzset{
  ncfourW/.pic={
    \draw (0,0.2)--(0,0)--(0.3,0)--(0.3,0.2);
	\draw (0.1,0)--(0.1,0.2);
	\draw (0.2,0)--(0.2,0.2);
  }
}
\tikzset{
  ncfiveW/.pic={
    \draw (0,0.2)--(0,0)--(0.4,0)--(0.4,0.2);
	\draw (0.1,0)--(0.1,0.2);
	\draw (0.2,0)--(0.2,0.2);
	\draw (0.3,0)--(0.3,0.2);
  }
}
\tikzset{
  nconeinsidetwoWW/.pic={
    \path (0,0) pic {nctwoWW}; \path (0.1,0.1) pic {ncone};
  }
}
\tikzset{
  nconeinsidethreeWW/.pic={
    \path (0,0) pic {ncthreeWW}; \path (0.1,0.1) pic {ncone};
  }
}
\tikzset{
  nconeinsidethreerightWW/.pic={
    \draw (0,0.2)--(0,0)--(0.3,0)--(0.3,0.2);
    \draw (0.1,0)--(0.1,0.2); \draw (0.2,0.1)--(0.2,0.3);
  }
}
\tikzset{
  nconeinsidethreeleftWW/.pic={
    \draw (0,0.2)--(0,0)--(0.3,0)--(0.3,0.2);
    \draw (0.2,0)--(0.2,0.2); \draw (0.1,0.1)--(0.1,0.3);
  }
}
\tikzset{
  nctwoWWW/.pic={
    \draw (0,0.2)--(0,0)--(0.3,0)--(0.3,0.2);
  }
}
\tikzset{
  nconeoneinsidetwoWW/.pic={
	\path (0,0) pic {ncone};
    \path (0.1,0) pic {nctwoWW}; 
	\path (0.2,0.1) pic {ncone};
  }
}
\DeclareMathOperator{\dgraft}{\widehat{\triangleright}}
\DeclareMathOperator{\dgraftt}{\widehat{\blacktriangleright}}
\newtheorem{theorem}{Theorem}[section]
\newtheorem{corollary}[theorem]{Corollary}
\newtheorem{lemma}[theorem]{Lemma}
\newtheorem{proposition}[theorem]{Proposition}
\newtheorem{definition}[theorem]{Definition}
\newtheorem{example}[theorem]{Example}
\newtheorem{remark}[theorem]{Remark}
\DeclareMathOperator{\graft}{\triangleright}
\title{A Survey on the Munthe-Kaas--Wright Hopf Algebra}
\author{Kurusch Ebrahimi-Fard$^*$, Ludwig Rahm\footnote{Department of Mathematical Sciences, Norwegian University of Science and Technology (NTNU), 7491 Trondheim, Norway. \texttt{kurusch.ebrahimi-fard@ntnu.no}, \texttt{ludwig.rahm@ntnu.no}.}}
\begin{document}

\maketitle


\begin{abstract}
We survey the Munthe-Kaas--Wright Hopf algebra defined on planar rooted trees. This algebra serves a role akin to that of the Butcher--Connes--Kreimer Hopf algebra on non-planar rooted trees within the domain of numerical methods for ordinary differential equations. In the course of our presentation, we revisit Foissy's work on finite-dimensional comodules over the Butcher–Connes–Kreimer Hopf algebra and expand on his findings to include the Munthe-Kaas–Wright Hopf algebra. This involves detailing its endomorphisms and recursively constructing its primitive elements. These results are applied within the context of rough paths, where we describe an isomorphism between planarly branched and geometric rough paths. Our approach hinges on the extension of the Guin--Oudom construction to post-Lie algebras. In the case of the free post-Lie algebra defined on planar rooted trees, it yields the dual of the Munthe-Kaas--Wright Hopf algebra. Surprisingly, we uncover a natural connection between the concept of bialgebras in cointeraction and the Guin--Oudom construction. Prompted by the rough path perspective, we explore this finding through the lens of translations on rough paths. Additionally, we investigate the geometric embedding for planar regularity structures.
\end{abstract}


\tableofcontents


\section{Introduction}
\label{sec:intro}

The scope of classical numerical integration techniques for ordinary differential equations has expanded from its traditional domain in Euclidean spaces to encompass manifold applications. See, for instance, the early work by Crouch and Grossman \cite{Crouch1993aa}. Notably, advancements in Lie group methods \cite{IserlesEtAl2000} have led to the generalization of Butcher’s B-series, giving rise to the concept of Lie--Butcher series \cite{MuntheKaas1995aa}. 

The pioneering works of Connes and Kreimer \cite{CK1998} and of Brouder \cite{Brouder2000aa} revealed that a comprehensive mathematical understanding of the theory of B-series naturally involves the theory of combinatorial Hopf and pre-Lie algebras defined on non-planar rooted trees. Indeed, the seminal works just mentioned highlight the pivotal role played by the Butcher--Connes--Kreimer (BCK) Hopf algebra in providing a natural algebraic and combinatorial framework for describing the composition of B-series \cite{Chartier2010aa,HLW2006,Murua2006aa}, thereby providing powerful ways to study order conditions of numerical methods. Work by Guin and Oudom \cite{OudomGuin2008} further underscores the intricate interplay between properties of rooted trees and the algebraic framework for B-series by showing how the BCK Hopf algebra can be derived from the free pre-Lie algebra. The latter is constructed through a simple and natural grafting operation defined on the space spanned by non-planar rooted trees \cite{ChapLiv2001}. 

Conversely, the theory of Lie--Butcher series necessitates a shift from non-planar rooted trees to planar counterparts. This transition involves replacing the free pre-Lie algebra on non-planar trees with the free post-Lie algebra defined on planar rooted trees \cite{Munthe-Kaas2013aa}. The extension of the Guin--Oudom findings to post-Lie algebras \cite{EbrahimiFardLundervoldMuntheKaas2015} enables the derivation of the Munthe-Kaas--Wright (MKW) Hopf algebra \cite{Munthe-KaasWright2008}. This result sets the stage for our comprehensive examination of the latter. We aim to put the MKW Hopf algebra on the same level as the BCK Hopf algebra, by extending the known results on the latter onto the former.
 
Our presentation centers on the rough path perspective on Lie--Butcher series and the pivotal role played by the MKW Hopf algebra in the context of planarly branched rough paths. In this respect, we delve into Foissy's analysis of finite-dimensional comodules over the BCK Hopf algebra \cite{Foissy2002}. Building upon this work, we broaden its scope\footnote{We also mention earlier work by Foissy \cite{FoissyFrench}, where it is shown that under certain assumptions a coalgebra is isomorphic to the tensor coalgebra. See Remark \ref{rmk:foissyfrench}. We thank one of the referees for bringing this work to our attention.} by including the MKW Hopf algebra. This entails studying its endomorphisms as well as the recursive construction of its primitive elements, which is essential for deeper insights into its structure and properties. Indeed, with Foissy's results at hand, we present an isomorphism between planarly branched and geometric rough paths. With the aforementioned findings in place, and motivated by the recent work of Bruned and Katsetsiadis on the geometric embedding for regularity structures \cite{BrunedKatsetsiadis2023}, we investigate the geometric embedding for planar regularity structures. 

The compositional structure of B-series and Lie--Butcher series is complemented by substitution laws, which are crucial in the context of modified differential equations within backward error analysis \cite{CHV2007}. Recognizing the significance of the concept of cointeracting bialgebras in the context of composition and substitution laws for both B-series and Lie--Butcher series \cite{CEFM2011,Chartier2010aa,Manchon2018,Rahm2022a}, we unveil a natural yet previously overlooked linkage between bialgebras in cointeraction and the Guin--Oudom framework. Motivated by the rough path perspective, we further explore this connection through the lens of translations on rough paths presented in reference \cite{BrunedChevyrevFrizPreiss,Rahm2021RP}. We remark that Foissy recently formalised the concept of bialgebras in cointeracting into the notion of double bialgebras \cite{Foissy2022}.

\medskip

Let us start with the common initial value problem for a smooth vector field $f$
\begin{align}
\label{eq:ivp}
	y'=f(y), \quad y(0)=y_0,
\end{align}
and consider the numerical method
$$
	y_{k+1}=\Phi(h,f)(y_k) .
$$ 
It is well-known that in the case of a Runge--Kutta method the Taylor expansion of $\Phi(h,f)(y_0)$, seen as a function of the step-size parameter $h$, can be expressed as a linear combination of so-called elementary differentials in the vector field $f$. Back in 1857,  Cayley \cite{Cayley1857} 
understood that elementary differentials are in bijection with non-planar rooted trees. This fundamental observation plays an important role in the work of Butcher \cite{Butcher1972} who developed an algebraic theory of integration methods based on non-planar rooted trees
\begin{equation*}
	\textbf{T} := \{\Forest{[]},\Forest{[[]]},\Forest{[[[]]]},\Forest{[[][]]},\Forest{[[[[]]]]},
	\Forest{[[[][]]]},\Forest{[[[]][]]}=\Forest{[[][[]]]},\Forest{[[][][]]} ,\ldots   \}.
\end{equation*}
The degree of a rooted tree $\tau \in T$ is defined in terms of its number of vertices denoted $|\tau|$. The empty tree $e$ can be seen as having degree zero. We can order the set $\textbf{T}  = \bigcup_{n \ge 0} \textbf{T} _n$ in terms of degrees. Looking at a rooted tree $\tau$, it becomes evident that it can be described in terms of a set of smaller rooted trees $\tau_1,\ldots,\tau_n$ and the $B_+$-operator. The latter adds a new root and connects it via new edges to the roots of the $\tau_i$, such that $\tau=B_+(\tau_1,\ldots,\tau_n)$ and $|\tau|=1+|\tau_1| + \cdots +|\tau_n|$. For instance, $B_+(e)=\Forest{[]}$ and $|\Forest{[]}|=1 + 0$.  

More generally, we say that the numerical method $\Phi(h,f)$ is a Butcher or simply B-series method if its Taylor expansion is a linear combination of elementary differentials. In this case, there exists a function $\alpha: \textbf{T} \to \mathbb{R}$ such that
\begin{equation} 
\label{eq::BSeriesMethod}
	\Phi(h,f)(y_k)= \sum_{\tau \in \textbf{T} } h^{|\tau|}\alpha(\tau)\mathcal{F}_f[\tau](y_k),
\end{equation}
where $\mathcal{F}_f$ is a bijection between rooted trees and elementary differentials defined inductively \cite{HLW2006} by $\mathcal{F}_f[B_+(e)]:=f$ and  
\begin{equation} 
\label{eq:elementarydiff} 
	\mathcal{F}_f[B_+(\tau_1,\ldots,\tau_n)]:= f^{(n)}(F_f[\tau_1],\ldots, F_f[\tau_n]).
\end{equation}
Here the $n$-th derivative of $f$ is denoted $f^{(n)}$. For example 
\begin{equation*} 
	\mathcal{F}_f[\Forest{[[]]}]=f^{(1)}(f)
	\qquad
	\mathcal{F}_f[\Forest{[[][]]}]=f^{(2)}(f,f)
	\qquad
	\mathcal{F}_f[\Forest{[[[]]]}]=f^{(1)}(f^{(1)}(f)).
\end{equation*}

There is a bijection between B-series methods and maps from $\textbf{T}$ to $\mathbb{R}$. We will denote the B-series method given by the map $\alpha$ as $B_f(\alpha)$. Runge--Kutta methods are dense in the space of B-series methods \cite{butcher2016numerical}, but not every B-series method is a Runge--Kutta method. Indeed, there are known examples of B-series methods that are not Runge--Kutta. We note that the natural question of which numerical integration methods can be described by B-series was answered recently in the important work \cite{MclachlanModinMuntheKaasVerdier2016}. 

Vector fields on a Euclidean space, together with the directional derivative, form a so-called pre-Lie algebra \cite{Burde2006,CP2021,Manchon2011}. Denoting the derivative of the vector field $Y$ in the direction of the vector field $X$ by $\triangledown_X Y$, this means that they satisfy the algebraic relation
\[
	\triangledown_X(\triangledown_Y Z) - \triangledown_{ \triangledown_X Y}Z 
	= \triangledown_Y(\triangledown_X Z) - \triangledown_{\triangledown_Y  X}Z,
\]
where we interpret $\triangledown$ as being a product in the algebra on vector fields. The free pre-Lie algebra is given by the vector space $\mathcal{T}=\text{span}(\textbf{T})$ spanned by non-planar rooted tree, together with the grafting product $\curvearrowright$. The product $\tau_1 \curvearrowright \tau_2$ is defined as the sum over all ways to add an edge from the root of $\tau_1$ to some vertex of $\tau_2$, for example:
\[
	\Forest{[[]]}\curvearrowright \Forest{[[][]]}=\Forest{[[[]][][]]}+\Forest{[[[[]]][]]}+\Forest{[[][[[]]]]}.
\]
The free pre-Lie algebra has a universality property \cite{ChapLiv2001}. In the above context, it says that there exists a unique pre-Lie algebra morphism from rooted trees to the vector fields, generated by sending the single-vertex tree to a vector field $f$. This unique morphism is indeed the map $\mathcal{F}_f$ from Equation \eqref{eq::BSeriesMethod} respectively Equation \eqref{eq:elementarydiff}.

In \cite{OudomGuin2008}, Guin and Oudom constructed the so-called Guin--Oudom functor which maps a pre-Lie algebra to a Hopf algebra. Following this functor one arrives at the important BCK Hopf algebra $\mathcal{H}_{\scriptscriptstyle{\textrm{BCK}}}$ over the non-planar forests $\mathcal{F}=\text{span}(\textbf{F})$, given by commutative words in non-planar trees \cite{CK1998}. The coproduct $\Delta_{\scriptscriptstyle{\textrm{BCK}}}$ of $\mathcal{H}_{\scriptscriptstyle{\textrm{BCK}}}$ describes the (dual of the) operation of composing two B-series methods. Indeed, first taking one step with the method $B_f(\alpha)$ and then taking one step with another method, $B_f(\beta)$, can be described as taking one step with the method $B_f(\gamma)$, where the map $\gamma$ on a forest $\omega \in \mathcal{F}$ is computed by
\begin{align*}
	\langle \gamma,\omega \rangle = \langle \alpha \otimes \beta, \Delta_{\scriptscriptstyle{\textrm{BCK}}}(\omega) \rangle.
\end{align*}
It is important to note that the Guin--Oudom functor has become a crucial tool in constructing combinatorial Hopf algebras used to describe rough paths and regularity structures \cite{BruKatPostLie,BrunedManchon2022}.

Recalling that B-series can describe vector fields, one can also consider expressions of the form $B_{B_f(\alpha)}(\beta)$. This is called substitution of B-series and can be described with a similar method, i.e., $B_{B_f(\alpha)}(\beta)=B_f(\upsilon)$, where $\upsilon$ is obtained by a second Hopf algebra $\mathcal{H}_{\scriptscriptstyle{\textrm{CEFM}}}$ that is coacting on $\mathcal{H}_{\scriptscriptstyle{\textrm{BCK}}}$ via a coaction $\rho_{\scriptscriptstyle{\textrm{CEFM}}}: \mathcal{H}_{\scriptscriptstyle{\textrm{BCK}}} \to \mathcal{H}_{\scriptscriptstyle{\textrm{CEFM}}} \otimes \mathcal{H}_{\scriptscriptstyle{\textrm{BCK}}}$ \cite{CEFM2011}. Crucially, this coaction satisfies an important property called \textit{cointeraction}. We emphasise that substitution of B-series has been linked to translations of rough paths and renormalisation of regularity structures \cite{BrunedChevyrevFrizPreiss}. 

\smallskip
 
Consider again the IVP \eqref{eq:ivp}, where now $y: \mathbb{R} \to \mathcal{M}$ is valued in a homogeneous space $\mathcal{M}$ and $f$ is a vector field on $\mathcal{M}$. Runge--Kutta methods for this equation were developed by Munthe-Kaas in \cite{MuntheKaas1995aa,MuntheKaas1998}. Here one makes the same observation as in Euclidean space, i.e., the Taylor expansion of Munthe-Kaas--Runge--Kutta methods are linear combinations of elementary differentials. The so-called Lie--Butcher series (LB-series) methods are then defined as any method whose Taylor expansion is a linear combination of elementary differentials. It was shown in \cite{Munthe-KaasWright2008} that elementary differentials can be expanded as a series in (the free Lie algebra generated by) \textit{planar} rooted trees
\begin{align*}
	\textbf{PT} = \{\Forest{[]},\Forest{[[]]},\Forest{[[[]]]},\Forest{[[][]]},\Forest{[[[[]]]]},
	\Forest{[[[][]]]},\Forest{[[[]][]]} \neq \Forest{[[][[]]]},\Forest{[[][][]]} ,\dots   \},
\end{align*}
meaning rooted trees endowed with an embedding into the plane. The order of the branches attached to a vertex now matters, which reflects the non-commutativity of covariant derivatives on a manifold. In order to describe this representation, consider the space $\mathcal{X}(\mathcal{M})$ of vector fields on a manifold $\mathcal{M}$, together with a covariant derivative where we denote $X \graft Y$ for the derivative of the vector field $Y$ in the direction of $X$. Recall that the torsion $T$ is defined by
\[
	T(X,Y)=X \graft Y - Y \graft X - \llbracket X,Y \rrbracket,
\]
where $\llbracket \cdot,\cdot \rrbracket$ is the Jacobi bracket. If the connection is flat and has constant torsion, meaning that we are in a homogeneous space, then $(\mathcal{X}(\mathcal{M}),-T(\cdot,\cdot),\graft)$ is a so-called \textit{post-Lie algebra}. The free post-Lie algebra is given by the free Lie algebra generated over planar trees, and the representation of vector fields by planar trees in LB-series is the unique post-Lie algebra morphism given by the universality property of the free post-Lie algebra. Note that pre-Lie algebra is a special case of post-Lie algebra. In the geometric setting outlined above it corresponds to the fact that Euclidean spaces are a special case of homogeneous spaces. 

The Guin--Oudom functor was extended to post-Lie algebras in \cite{EbrahimiFardLundervoldMuntheKaas2015}. Applying this functor in the context of the free post-Lie algebra, one arrives at the MKW Hopf algebra $\mathcal{H}_{\scriptscriptstyle{\textrm{MKW}}}$ -- used to describe composition of LB-series \cite{Munthe-KaasWright2008}. This may be interpreted as a planar generalization of the BCK Hopf algebra. Indeed, $\mathcal{H}_{\scriptscriptstyle{\textrm{BCK}}}$ is a Hopf subalgebra of $\mathcal{H}_{\scriptscriptstyle{\textrm{MKW}}}$. Indeed, it was shown in \cite{Munthe-KaasWright2008} that there exists an injective, non-surjective, Hopf algebra morphism $\Omega: \mathcal{H}_{\scriptscriptstyle{\textrm{BCK}}} \to \mathcal{H}_{\scriptscriptstyle{\textrm{MKW}}}$ defined by sending a non-planar forest to a weighted sum of all its possible planar embeddings. The fact that $\mathcal{H}_{\scriptscriptstyle{\textrm{BCK}}}$ embeds as a subspace in $\mathcal{H}_{\scriptscriptstyle{\textrm{MKW}}}$ makes sense, corresponding to the fact that every Euclidean space is also homogeneous. Continuing the analogy between the Hopf algebras, it is also the case that the MKW coproduct $\Delta_{\scriptscriptstyle{\textrm{MKW}}}$ describes composition of LB-series in exactly the same way that $\Delta_{\scriptscriptstyle{\textrm{BCK}}} $ describes composition of B-series. Substitution of LB-series, on the other hand, is more complicated and was first described in \cite{Lundervold2013aa}. It was explored further by one of us in \cite{Rahm2022a}, where its proper Hopf algebraic formulation was given fitting the framework of cointeracting bialgebras.

In the important work \cite{Foissy2002}, Foissy characterised all finite-dimensional comodules as well as all endomorphisms of the BCK Hopf algebra $\mathcal{H}_{\scriptscriptstyle{\textrm{BCK}}}$. He furthermore constructed a recursively defined projection map onto the primitive elements of $\mathcal{H}_{\scriptscriptstyle{\text{BCK}}}$. To prove these results he considered the so-called natural growth operation 
$$
	\top: \mathcal{H}_{\scriptscriptstyle{\text{BCK}}} \otimes \mathcal{H}_{\scriptscriptstyle{\mathrm{BCK}}} 
	\to \mathcal{H}_{\scriptscriptstyle{\text{BCK}}}
$$ 
and its relation with the reduced coproduct $\hat{\Delta}_{\scriptscriptstyle{\text{BCK}}}$ on $\mathcal{H}_{\scriptscriptstyle{\text{BCK}}}$:
\begin{equation}
\label{eq::NaturalGrowthBCK}
	\hat{\Delta}_{\scriptscriptstyle{\text{BCK}}}(x \top y) 
	= x \otimes y + x^{(1)} \otimes (x^{(2)} \top y ),
\end{equation}
for any $x \in \mathcal{H}_{\scriptscriptstyle{\text{BCK}}}$ and any primitive element $y$, i.e., $\hat{\Delta}_{\scriptscriptstyle{\text{BCK}}}(y)=0$. On the righthand side of \eqref{eq::NaturalGrowthBCK}, we used Sweedler's notation for the reduced coproduct, i.e., $\hat{\Delta}_{\scriptscriptstyle{\text{BCK}}}(x)=x^{(1)} \otimes x^{(2)}$. In the recent works \cite{VarzanehRiedelSchmedingTapia2022, bellingeri2024branched}, Foissy's description of primitive elements in $\mathcal{H}_{\scriptscriptstyle{\text{BCK}}}$ in terms of the natural growth operator was revisited in the context of Gubinelli's controlled rough paths.

\smallskip

In the paper at hand, we define such a natural growth operator on the MKW Hopf algebra $\mathcal{H}_{\scriptscriptstyle{\text{MKW}}}$ of planar forests, and use Foissy's results to describe its primitive elements. With the goal of making our presentation self-contained, we recall Foissy's work by extending it to any combinatorial Hopf algebra which permits the definition of a natural growth operator\footnote{See, however, Footnote 1 above.}. Along the way, links are observed between the concept of bialgebras in cointeraction \cite{CEFM2011,Foissy2022}, the Guin--Oudom construction \cite{OudomGuin2008} and the notion of translation on rough paths \cite{BrunedChevyrevFrizPreiss,Rahm2021RP}. We then apply these findings to describe an isomorphism between planarly branched and geometric rough paths, similar to the work \cite{BoedihardjoChevyrev2019}. Following the recent work by the second author on Hairer's regularity structures \cite{Hairer2014regularity} from the viewpoint of planarly branched rough paths \cite{CurryEbrahimiFardManchonMuntheKaas2018}, we explore Bruned's and Katsetsiadis' geometric embedding \cite{BrunedKatsetsiadis2023} in the context of planar regularity structures \cite{Rahm2022}. 

\medskip

The paper is organised as follows. We will first recall the Guin--Oudom construction in Section \ref{sec:GOconstruction}, which underlies both the BCK Hopf algebra and the MKW Hopf algebra. We then show in Theorem \ref{thm::GraftCointeraction} how this construction induces a cointeraction between two isomorphic Hopf algebras. In Section \ref{sec:MKW}, we recall the MKW Hopf algebra obtained by applying the Guin--Oudom construction to the free post-Lie algebra. In Section \ref{sect:Foissy}, we recall the main results of Foissy's paper \cite{Foissy2002} on the BCK Hopf algebra, extended to any Hopf algebra with a natural growth operator satisfying the analog of equation \eqref{eq::NaturalGrowthBCK} with respect to its reduced coproduct. Unless explicitly stated otherwise, all results in this section are due to Foissy \cite{Foissy2002} (see also Remark \ref{rmk:foissyfrench}). We then show that the MKW Hopf algebra has a natural growth operator, and apply Foissy's results on this special case. Using Proposition \ref{prop::Projection}, we describe a projection onto the primitive elements in the MKW Hopf algebra. In Proposition \ref{Prop:HopfAlgebraMorphism} we show that the MKW Hopf algebra is isomorphic to a shuffle Hopf algebra. In Section \ref{sec:roughpaths}, we recall the concepts of rough paths in combinatorial Hopf algebras and their translations. We then comment on the cointeraction obtained from the Guin--Oudom construction in Section \ref{sec:GOconstruction}, and its relations to the notion of translations of rough paths. We show in Proposition \ref{prop::DisjointCointeractions} that the two different cointeractions are disjoint. In Section \ref{sec:planarlyBRP}, we show an isomorphism between planarly branched rough paths and geometric rough paths. We then argue in Corollary \ref{Cor::TrivialSignatureTreeLike} that a planarly branched rough path has trivial signature if and only if it is tree-like. In Section \ref{sec:geomembedding}, we explore the geometric embedding for regularity structures \cite{BrunedKatsetsiadis2023} in the context of planar regularity structures. The geometric embedding in \cite{BrunedKatsetsiadis2023} comes in the form of a Hopf algebra isomorphism between the Hopf algebra for recentering and the quotient of a tensor Hopf algebra. We comment on why one needs to take a quotient, and construct the corresponding isomorphism in the planar setting.

\medskip
\medskip

\noindent {\bf{Acknowledgements}}: We thank D.~Manchon for helpful comments and remarks. We would also like to thank the referees for helpful comments which led to an improved presentation. We also thanks Darij Grinberg for helpful comment. Both authors are supported by the Research Council of Norway through project 302831 “Computational Dynamics and Stochastics on Manifolds” (CODYSMA). The first author would like to thank the Center for Advanced Study in Oslo for warm hospitality and support. The second author has received support from the project Pure Mathematics in Norway – Ren matematikk i Norge funded by the Trond Mohn Foundation.


\section{Post-Lie algebras and the Guin--Oudom construction}
\label{sec:GOconstruction}

In reference \cite{OudomGuin2008}, Guin and Oudom describe what may be called the Guin--Oudom construction. It provides a method to obtain a Hopf algebra starting from a pre-Lie algebra. See \cite{Burde2006,Manchon2008} for details regarding pre-Lie algebras. The Guin--Oudom construction can be generalised to post-Lie algebras \cite{EbrahimiFardLundervoldMuntheKaas2015}. See also \cite{Foissy2018post}.

Following \cite{Munthe-KaasWright2008}, a post-Lie algebra $(V,\graft,[\cdot,\cdot])$ consists of a vector space $V$ endowed with a Lie bracket $[\cdot,\cdot]$ as well as a magmatic (post-Lie) product $\graft: V \otimes V \to V$ such that the following relations are satisfied for all $x,y,z \in V$:
\begin{align}
	[x,y]\graft z
	&= x \graft (y \graft z) - (x \graft y)\graft z - y \graft (x \graft z) + (y \graft x)\graft z,\label{postLie1}\\ 
	x \graft [y,z]
	&= [x \graft y,z] + [y,x \graft z]. \label{postLie2}
\end{align}
One can show that a post-Lie algebra always comes equipped with a second Lie bracket defined by:
\begin{align*}
	\llbracket x,y \rrbracket := x \graft y - y \graft x + [x,y].
\end{align*}
It is clear from \eqref{postLie1} and \eqref{postLie2} that a post-Lie algebra is a generalisation of a (left) pre-Lie  algebra in the sense that the latter can be seen as a post-Lie algebra with a trivial Lie bracket, $[x,y]=0$ for all $x,y \in V$.

Let $\mathfrak{g}=(V,[\cdot,\cdot])$ and $\tilde{\mathfrak{g}}=(V,\llbracket \cdot,\cdot \rrbracket)$ denote the two Lie algebras present in the context of a post-Lie algebra, with corresponding universal enveloping algebras $\mathcal{U}(\mathfrak{g})$ respectively $\mathcal{U}(\tilde{\mathfrak{g}})$. In \cite{EbrahimiFardLundervoldMuntheKaas2015}, the Guin--Oudom construction was generalised to post-Lie algebras, by first extending the post-Lie product $\graft$ to $\mathcal{U}(\mathfrak{g})$:
\begin{align}
	1 \graft A 
	&= A 							\label{eq::GuinOudom1} \\
	xA \graft y 
	&= x \graft (A \graft y) - (x \graft A) \graft y \label{eq:.GuinOudom2} \\
	A \graft BC 
	&= (A_{(1)} \graft B)(A_{(2)} \graft C),	\label{eq::GuinOudom3}
\end{align}
where $x,y \in \mathfrak{g}$ and  $A,B,C \in \mathcal{U}(\mathfrak{g})$. Recall that $\mathcal{U}(\mathfrak{g})$ carries a natural Hopf algebra structure, which we denote by $\mathcal{H}_T=(\mathcal{U}(\mathfrak{g}),\cdot,\Delta_{\shuffle})$. Note that we are using Sweedler's notation for the unshuffle coproduct, $\Delta_{\shuffle}(A)=A_{(1)} \otimes A_{(2)}$, defined on $\mathcal{U}(\mathfrak{g})$. The structure $(\mathcal{U}(\mathfrak{g}),\graft,\cdot)$ is a so-called D-algebra \cite{Munthe-KaasWright2008}. Following Guin--Oudom, we now define a non-commutative product on $\mathcal{U}(\mathfrak{g})$:
\begin{align}
\label{assoprod}
	A \ast B 
	&= A_{(1)}\cdot (A_{(2)} \graft B), 
\end{align}
which can be shown to be associative and unital. In fact, the product \eqref{assoprod} enters the Guin--Oudom construction through the intriguing identity \cite{EbrahimiFardLundervoldMuntheKaas2015,OudomGuin2008}.
\begin{align}
\label{shift}	
	A \graft (B \graft C) = (A \ast B) \graft C. 
\end{align}

\begin{theorem}[\cite{EbrahimiFardLundervoldMuntheKaas2015}] \label{Theorem::GuinOudomIsomorphism}
$\mathcal{H}=(\mathcal{U}(\mathfrak{g}),\ast,\Delta_{\shuffle})$ is a Hopf algebra. Furthermore, $\mathcal{H}$ is isomorphic to the natural Hopf algebra defined on $\mathcal{U}(\tilde{\mathfrak{g}})$.
\end{theorem}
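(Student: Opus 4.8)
The plan is to prove the two assertions in turn. For the first, that $\mathcal{H}=(\mathcal{U}(\mathfrak{g}),\ast,\Delta_{\shuffle})$ is a Hopf algebra, I would take as given (as recorded in the text, and as it follows from the D-algebra axioms together with \eqref{shift}) that $\ast$ is associative and unital with unit $1$, and that $\Delta_{\shuffle}$ is a coassociative, cocommutative coproduct with counit $\varepsilon$ on $\mathcal{U}(\mathfrak{g})$. The only substantive point left for the bialgebra structure is that $\Delta_{\shuffle}$ is an algebra morphism from $(\mathcal{U}(\mathfrak{g}),\ast)$ to its tensor square, i.e. $\Delta_{\shuffle}(A\ast B)=\sum(A_{(1)}\ast B_{(1)})\otimes(A_{(2)}\ast B_{(2)})$; compatibility of $\varepsilon$ with $\ast$ is immediate. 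Once the bialgebra axioms hold, connectedness of the grading by word length (equivalently, the natural filtration of $\mathcal{U}(\mathfrak{g})$) forces the convolution inverse of the identity to exist, so the antipode comes for free.

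For the compatibility identity I would first upgrade \eqref{eq::GuinOudom3} to the statement that the extended grafting $\graft\colon\mathcal{U}(\mathfrak{g})\otimes\mathcal{U}(\mathfrak{g})\to\mathcal{U}(\mathfrak{g})$ is a morphism of coalgebras, namely
\[
\Delta_{\shuffle}(A\graft B)=(A_{(1)}\graft B_{(1)})\otimes(A_{(2)}\graft B_{(2)}).
\]
This is proved by induction on the word length of $A$, using \eqref{eq::GuinOudom1}, \eqref{eq:.GuinOudom2}, \eqref{eq::GuinOudom3} and the fact that $\Delta_{\shuffle}$ is already multiplicative for the concatenation product $\cdot$. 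Granting it, the compatibility of $\Delta_{\shuffle}$ with $\ast$ is a direct computation: expand $A\ast B=A_{(1)}\cdot(A_{(2)}\graft B)$ via \eqref{assoprod}, apply $\Delta_{\shuffle}$, use multiplicativity for $\cdot$ and the coalgebra-morphism property of $\graft$, and finally use coassociativity and cocommutativity (which make the four-fold coproduct symmetric in all four Sweedler legs) to reorder the legs into $\sum(A_{(1)}\ast B_{(1)})\otimes(A_{(2)}\ast B_{(2)})$.

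For the second assertion I would pass through the primitive elements. Since the coproduct $\Delta_{\shuffle}$ is unchanged, the primitives of $\mathcal{H}$ coincide with those of $\mathcal{H}_T$, namely $V$. It remains to identify the Lie bracket that $\ast$ induces on $V$ by commutation. For $x,y\in V$ one has $\Delta_{\shuffle}(x)=x\otimes 1+1\otimes x$, so by \eqref{eq::GuinOudom1} and \eqref{assoprod},
\[
x\ast y=x\cdot(1\graft y)+1\cdot(x\graft y)=x\cdot y+x\graft y,
\]
whence
\[
x\ast y-y\ast x=(x\cdot y-y\cdot x)+(x\graft y-y\graft x)=[x,y]+x\graft y-y\graft x=\llbracket x,y\rrbracket,
\]
since the commutator in $\mathcal{U}(\mathfrak{g})$ of two elements of $V$ is the Lie bracket of $\mathfrak{g}$. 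Thus $\mathrm{Prim}(\mathcal{H})=\tilde{\mathfrak{g}}$ as Lie algebras. Because $\mathcal{H}$ is connected, graded and cocommutative, the Cartier--Milnor--Moore theorem (in characteristic zero) then yields a canonical Hopf algebra isomorphism $\mathcal{H}\cong\mathcal{U}(\mathrm{Prim}(\mathcal{H}))=\mathcal{U}(\tilde{\mathfrak{g}})$; alternatively, the inclusion $\tilde{\mathfrak{g}}\hookrightarrow\mathcal{H}$ is a Lie morphism and extends, by the universal property of the enveloping algebra, to an algebra map $\mathcal{U}(\tilde{\mathfrak{g}})\to\mathcal{H}$ whose bijectivity is checked by comparing graded dimensions through PBW.

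The main obstacle I anticipate is the coalgebra-morphism property of the extended grafting, as it is the one step that genuinely uses the recursion \eqref{eq::GuinOudom1}--\eqref{eq::GuinOudom3} rather than formal bookkeeping, and keeping track of the Sweedler legs through the induction is where care is required. A secondary caveat is that the clean appeal to Cartier--Milnor--Moore needs the base field to be of characteristic zero and $\mathcal{U}(\mathfrak{g})$ to be connected graded (or suitably filtered), so in the general post-Lie setting one should either assume a grading of $V$ by positive degrees or replace that appeal with the explicit PBW-based bijectivity argument.
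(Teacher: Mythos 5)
The paper does not prove this theorem; it is imported verbatim from \cite{EbrahimiFardLundervoldMuntheKaas2015}, so there is no in-paper proof to compare against. Judged on its own, your argument is correct and is essentially the standard Oudom--Guin-style proof from that reference: the only genuinely new bialgebra axiom is the compatibility of $\Delta_{\shuffle}$ with $\ast$, which reduces to the coalgebra-morphism property of the extended grafting together with cocommutativity to permute the Sweedler legs, and the antipode then comes from connectedness of the filtration. Two small remarks. First, the induction for $\Delta_{\shuffle}(A\graft B)=(A_{(1)}\graft B_{(1)})\otimes(A_{(2)}\graft B_{(2)})$ is most naturally run on the number of factors of $B$ (the base case $B\in V$ uses that $\mathcal{U}(\mathfrak{g})\graft V\subseteq V$, which is the step that needs an induction on the length of $A$ via \eqref{eq:.GuinOudom2}); phrasing it as a single induction on the length of $A$ obscures this, though both inductions are needed and the argument goes through. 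Second, for the isomorphism with $\mathcal{U}(\tilde{\mathfrak{g}})$ your computation $x\ast y-y\ast x=\llbracket x,y\rrbracket$ on primitives is the right key observation; the cited reference then builds the isomorphism explicitly from the universal property of $\mathcal{U}(\tilde{\mathfrak{g}})$ and a PBW/filtration comparison (your stated alternative), whereas your primary route through Cartier--Milnor--Moore is a legitimate shortcut that buys brevity at the cost of the characteristic-zero and conilpotency hypotheses you correctly flag. No gaps beyond these presentational points.
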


Let $\overline{\mathcal{H}}=(\overline{\mathcal{U}(\mathfrak{g})},\ast,\Delta_{\shuffle})$ denote the completion of $\mathcal{H}$ with respect to the grading, with $\ast$ and $\Delta_{\shuffle}$ extended continuously. Similarly let $\overline{\mathcal{H}}_T$ denote the continuously extended tensor Hopf algebra. Note that both of these extensions are isomorphic as vector spaces, both are algebras, and neither is a coalgebra. That is because $\Delta_{\shuffle} : \overline{\mathcal{U}(\mathfrak{g})} \to \mathcal{U}(\mathfrak{g}) \overline{\otimes} \mathcal{U}(\mathfrak{g})$ does not take values in $\overline{\mathcal{U}(\mathfrak{g})} \otimes \overline{\mathcal{U}(\mathfrak{g})}$. Recall that the group-like elements 
$$
	\mathcal{G}:=\{A \in \overline{\mathcal{U}(\mathfrak{g})}\ |\ \Delta_{\shuffle}(A)=A \otimes A  \}
$$ 
form two groups: $\mathcal{G}_{\ast}=(\mathcal{G},\ast)$ and $\mathcal{G}_{\odot}=(\mathcal{G},\cdot)$. Now consider equation \eqref{eq::GuinOudom3} in the post-Lie version of the Guin--Oudom construction and let $A$ be a group-like element, then:
\begin{align}
\label{eq::GraftIsDistributive}
	A \graft BC=(A\graft B)(A \graft C).
\end{align}
Hence, from \eqref{shift} we deduce that $\graft: \mathcal{G} \otimes \overline{\mathcal{H}}_T \to \overline{\mathcal{H}}_T$ is an action of the group $\mathcal{G}_{\ast}$ on the group $\mathcal{G}_{\odot}$ that distributes over the product in $\mathcal{G}_{\odot}$. This is the dual notion to the important concept of cointeracting bialgebras \cite{CEFM2011,Foissy2022}.

\begin{definition} [\cite{Foissy2022}]
\label{def::cointeraction}
We say that two bialgebras, $(\mathcal{A},\odot_{\scriptscriptstyle{\mathcal{A}}}, \Delta_{\scriptscriptstyle{\mathcal{A}}}\epsilon_{\scriptscriptstyle{\mathcal{A}}},\eta_{\scriptscriptstyle{\mathcal{A}}})$ and  $(\mathcal{B},\odot_{\scriptscriptstyle{\mathcal{B}}},\Delta_{\scriptscriptstyle{\mathcal{B}}},\epsilon_{\scriptscriptstyle{\mathcal{B}}},\eta_{\scriptscriptstyle{\mathcal{B}}})$, are in cointeraction if $\mathcal{B}$ is coacting on $\mathcal{A}$ via a map $\rho: \mathcal{A} \to \mathcal{B} \otimes \mathcal{A}$ that satisfies:
\begin{align*}
	\rho(1_\mathcal{A})
	&=1_\mathcal{B} \otimes 1_\mathcal{A}, \\
	\rho(x \odot_{\scriptscriptstyle{\mathcal{A}}} y)
	&=\rho(x) (\odot_{\scriptscriptstyle{\mathcal{B}}} \otimes \odot_{\scriptscriptstyle{\mathcal{A}}}) \rho(y), \\
	( \mathrm{id} \otimes \epsilon_{\scriptscriptstyle{A}})\rho
	&=1_\mathcal{B} \epsilon_{\scriptscriptstyle{\mathcal{A}}}, \\
	( \mathrm{id} \otimes \Delta_{\scriptscriptstyle{A}})\rho
	&=m_{\scriptscriptstyle{\mathcal{B}}}^{1,3}(\rho \otimes \rho)\Delta_{\scriptscriptstyle{\mathcal{A}}},
\end{align*}
where 
\begin{align*}
	m_{\scriptscriptstyle{\mathcal{B}}}^{1,3}(a \otimes b \otimes c \otimes d)
		:=a \odot_{\scriptscriptstyle{\mathcal{B}}}c \otimes b \otimes d.
\end{align*}
\end{definition}

\begin{remark}
In numerical analysis, the group $\mathcal{G}_{\odot}$ represents numerical methods, whereas the group $\mathcal{G}_{\ast}$ represents flows. The product $\graft$ represents parallel transport \cite{CEFMK2019}. Equality \eqref{eq::GraftIsDistributive} then expresses an equivariance of numerical methods in $\mathcal{G}_{\odot}$: one can either first compose two numerical methods and then parallel transport along a flow, or first parallel transport both methods along the same flow and then compose them.
\end{remark}

Let $\mathcal{U}(\mathfrak{g})^{\ast}$ denote the graded dual of $\mathcal{U}(\mathfrak{g})$, which we can identify with $\mathcal{U}(\mathfrak{g})$ as a vector space by using the dual basis. Let $\mathcal{H}^{\ast}=(\mathcal{U}(\mathfrak{g})^{\ast},\shuffle,\Delta_{\ast})$ and $\mathcal{H}_T^{\ast}=(\mathcal{U}(\mathfrak{g})^{\ast},\shuffle,\Delta_{\odot})$ denote the corresponding graded dual Hopf algebras. Note that the space $\overline{\mathcal{U}(\mathfrak{g})}$ can be identified with linear maps on $\mathcal{U}(\mathfrak{g})^{\ast}$; Group-like elements $\mathcal{G}$ can be identified with Hopf algebra characters. We now define the dual coaction to the product $\graft$, denoted 
$$
	\rho_{\graft}: \mathcal{H}_T^{\ast} \to \mathcal{H}^{\ast} \otimes \mathcal{H}_T^{\ast},
$$ 
in terms of the pairing
\begin{align*}
	\langle A \graft B, x \rangle 
	= \langle A \otimes B, \rho_{\graft}(x) \rangle.
\end{align*}
We will show that $\mathcal{H}^{\ast}$ is in cointeraction with $\mathcal{H}_T^{\ast}$ via the coaction $\rho_{\graft}$.

\begin{lemma}
Let $A,B \in \mathcal{G}$ be group-like elements, then $A \graft B$ is group-like.
\end{lemma}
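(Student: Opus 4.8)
The plan is to avoid computing $\Delta_{\shuffle}(A \graft B)$ directly, and instead to express $A \graft B$ purely through the two associative products $\ast$ and $\cdot$ together with inversion, so that group-likeness follows immediately from the already-established group structures $\mathcal{G}_\ast = (\mathcal{G},\ast)$ and $\mathcal{G}_\odot = (\mathcal{G},\cdot)$.

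The starting point is the definition \eqref{assoprod} of the convolution product, $A \ast B = A_{(1)} \cdot (A_{(2)} \graft B)$. First I would specialise this to group-like $A$: since $\Delta_{\shuffle}(A) = A \otimes A$, Sweedler notation collapses to $A_{(1)} \otimes A_{(2)} = A \otimes A$, and the identity becomes $A \ast B = A \cdot (A \graft B)$. This single simplification is what isolates the term $A \graft B$ and is really the crux of the argument.

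Next I would solve for $A \graft B$. Because $A \in \mathcal{G}$ and $\mathcal{G}_\odot = (\mathcal{G},\cdot)$ is a group, $A$ has a $\cdot$-inverse $A^{-1} \in \mathcal{G}$; left-multiplying the previous identity by $A^{-1}$ and using associativity of $\cdot$ yields the closed form $A \graft B = A^{-1} \cdot (A \ast B)$. It then remains only to read off that the right-hand side lies in $\mathcal{G}$: the factor $A \ast B$ is group-like because $\mathcal{G}_\ast = (\mathcal{G},\ast)$ is a group and $A,B \in \mathcal{G}$; the factor $A^{-1}$ is group-like by the previous step; and their $\cdot$-product is group-like because $\mathcal{G}_\odot$ is closed under $\cdot$ (equivalently, $\Delta_{\shuffle}$ is an algebra morphism for $\cdot$, since $\mathcal{H}_T$ is a Hopf algebra). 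Hence $A \graft B \in \mathcal{G}$.

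I do not anticipate a real obstacle on this route; the only point requiring a little care is that everything takes place in the completion $\overline{\mathcal{U}(\mathfrak{g})}$, where one must confirm that $\ast$, $\cdot$ and the inversion of group-like elements are well defined and that $A \ast B$ and $A^{-1}$ genuinely remain in $\mathcal{G}$ — but this is precisely what the two stated group structures guarantee. An alternative, more computational route would be to first establish the general coproduct compatibility $\Delta_{\shuffle}(A \graft B) = (A_{(1)} \graft B_{(1)}) \otimes (A_{(2)} \graft B_{(2)})$ and then set $A,B$ group-like; this also works, but it requires an induction over the generators in $\mathfrak{g}$ and the auxiliary identity $x \graft 1 = \epsilon(x)\,1$, so I would prefer the short argument via $A \graft B = A^{-1} \cdot (A \ast B)$.
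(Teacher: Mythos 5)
Your argument is correct, but it takes a genuinely different route from the paper's. The paper simply quotes from \cite{EbrahimiFardLundervoldMuntheKaas2015} the general compatibility $\Delta_{\shuffle}(A \graft B) = (A_{(1)} \graft B_{(1)}) \otimes (A_{(2)} \graft B_{(2)})$ and specialises it to group-like $A,B$; you instead bypass that identity entirely by specialising \eqref{assoprod} to group-like $A$, obtaining $A \ast B = A \cdot (A \graft B)$ and hence $A \graft B = A^{-1}\cdot(A\ast B)$, and then reading off group-likeness from the two group structures $\mathcal{G}_{\ast}$ and $\mathcal{G}_{\odot}$. Your route is shorter on the page and only uses facts the paper has already put in place (that both $(\mathcal{U}(\mathfrak{g}),\ast,\Delta_{\shuffle})$ and $(\mathcal{U}(\mathfrak{g}),\cdot,\Delta_{\shuffle})$ are Hopf algebras, Theorem \ref{Theorem::GuinOudomIsomorphism}, so that $\mathcal{G}$ is closed under both products and under inversion in the completion). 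The trade-off is that the Hopf-algebra property of the $\ast$-product in the cited reference is itself established via the very coproduct identity the paper invokes, so the two proofs ultimately rest on the same foundation; the paper's version has the advantage of exhibiting the stronger statement $\Delta_{\shuffle}(A \graft B) = (A_{(1)} \graft B_{(1)}) \otimes (A_{(2)} \graft B_{(2)})$, which is reused implicitly elsewhere (e.g.\ in the proof of Theorem \ref{thm::GraftCointeraction}), whereas yours yields only the group-like case. Your one point of caution --- that everything happens in $\overline{\mathcal{U}(\mathfrak{g})}$ --- is handled correctly: $\ast$, $\cdot$ and $\graft$ are extended continuously and group-like elements are invertible there, so the manipulation $A^{-1}\cdot\bigl(A\cdot(A\graft B)\bigr) = A\graft B$ is legitimate.
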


\begin{proof}
It was shown in \cite{EbrahimiFardLundervoldMuntheKaas2015} that:
\begin{align*}
	\Delta_{\shuffle}(A \graft B)= (A_{(1)} \graft B_{(1)}) \otimes (A_{(2)} \graft B_{(2)}).
\end{align*}
Hence if $A,B$ are group-like elements, then:
\begin{align*}
	\Delta_{\shuffle}(A \graft B)= (A \graft B) \otimes (A \graft B).
\end{align*}
\end{proof}

\begin{lemma}
If $\langle \chi,x\rangle = \langle \chi,y \rangle$ for every character $\chi$, then we have equality $x=y$.
\end{lemma}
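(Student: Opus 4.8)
The plan is to set $z := x - y$ and to show that the hypothesis forces $z = 0$; equivalently, that the characters being paired against---which are precisely the group-like elements $\mathcal{G} \subset \overline{\mathcal{U}(\mathfrak{g})}$---separate the points of the graded dual $\mathcal{U}(\mathfrak{g})^{\ast}$. The first step is to produce a sufficiently rich supply of such characters. Since every element of $\mathfrak{g}$ is primitive for $\Delta_{\shuffle}$, for any $\phi \in \mathfrak{g}$ the exponential $\exp(\phi) = \sum_{n \geq 0} \tfrac{1}{n!}\phi^{n}$, with the power taken in the associative product of $\mathcal{U}(\mathfrak{g})$, is group-like and hence a character. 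As $\mathfrak{g}$ lives in strictly positive degrees, this series converges in the completion $\overline{\mathcal{U}(\mathfrak{g})}$, and its pairing with the fixed, finite-degree element $z$ involves only finitely many terms.

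The second step is a polarization argument. Writing $\phi = s_{1}\phi_{1} + \cdots + s_{m}\phi_{m}$ with $\phi_{i} \in \mathfrak{g}$ and scalars $s_{i}$, the hypothesis gives $\langle \exp(\phi), z \rangle = 0$. Expanding the left-hand side exhibits it as a polynomial in $(s_{1}, \ldots, s_{m})$ vanishing for all scalar values, so over the ground field of characteristic zero each of its coefficients vanishes. Reading off the coefficient of $s_{1}\cdots s_{n}$ yields $\langle \sum_{\sigma \in S_{n}} \phi_{\sigma(1)} \cdots \phi_{\sigma(n)}, z \rangle = 0$; that is, $z$ annihilates every symmetrized product of elements of $\mathfrak{g}$.

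Finally I would invoke the Poincaré--Birkhoff--Witt theorem: in characteristic zero the symmetrization map is a linear isomorphism onto $\mathcal{U}(\mathfrak{g})$, so the symmetrized products span the whole enveloping algebra. Consequently $z$ annihilates all of $\mathcal{U}(\mathfrak{g})$, and since the pairing between $\mathcal{U}(\mathfrak{g})$ and its graded dual is non-degenerate we conclude $z = 0$, i.e.\ $x = y$. The crux of the argument is the passage from ``$z$ kills all powers of each single primitive'' to ``$z$ kills all of $\mathcal{U}(\mathfrak{g})$'': this is exactly where characteristic zero (for the polarization) and PBW (for the spanning) are indispensable, since a priori one only controls the symmetric part of the enveloping algebra.
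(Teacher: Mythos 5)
Your argument is correct. Note that the paper states this lemma without any proof at all, treating it as a standard separation-of-points fact, so there is no in-text argument to compare against; what you have written is a complete justification of exactly the kind the authors are implicitly relying on. The three ingredients you isolate are the right ones: $\exp(\phi)$ for $\phi\in\mathfrak{g}$ is group-like for $\Delta_{\shuffle}$ and converges in $\overline{\mathcal{U}(\mathfrak{g})}$ because $\mathfrak{g}$ sits in positive degrees (so the pairing with a fixed element of the graded dual is a finite sum); polarization over the characteristic-zero (hence infinite) ground field extracts the symmetrized products $\sum_{\sigma}\phi_{\sigma(1)}\cdots\phi_{\sigma(n)}$ from the vanishing polynomial in $s_1,\dots,s_m$ (the harmless factor $\tfrac{1}{n!}$ aside); and the PBW symmetrization isomorphism shows these span $\mathcal{U}(\mathfrak{g})$, so non-degeneracy of the graded pairing forces $z=0$. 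One cosmetic remark: you only use the sub-family of characters $\exp(\mathfrak{g})\subset\mathcal{G}$ rather than all group-like elements of $\overline{\mathcal{U}(\mathfrak{g})}$, which only strengthens the conclusion and is exactly what is needed for the way the lemma is invoked in the proof of Theorem \ref{thm::GraftCointeraction} (where one also uses the immediate tensor-square consequence for pairings against $A\otimes B$).
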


\begin{theorem} \label{thm::GraftCointeraction}
The coaction $\rho_{\graft}$ gives a cointeraction of $\mathcal{H}^{\ast}$ onto $\mathcal{H}_T^{\ast}$.
\end{theorem}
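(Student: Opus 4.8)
The plan is to verify every identity packaged into ``$\mathcal{H}^\ast$ coacts on $\mathcal{H}_T^\ast$'' (the two comodule axioms) together with the four compatibilities of Definition \ref{def::cointeraction}, by dualizing the group-action properties of $\graft$ established above. The decisive observation is that $\mathcal{H}^\ast$ and $\mathcal{H}_T^\ast$ share the single underlying algebra $(\mathcal{U}(\mathfrak{g})^\ast,\shuffle)$, and that its characters are exactly the group-like elements $\mathcal{G}\subseteq\overline{\mathcal{U}(\mathfrak{g})}$ (those $A$ with $\Delta_{\shuffle}(A)=A\otimes A$), as noted before the theorem. By the second Lemma above, and by its tensor-power form — if $\langle A\otimes B,z\rangle=\langle A\otimes B,w\rangle$ for all group-like $A,B$ then $z=w$, which follows from the single-factor statement applied in each slot after choosing a linearly independent expansion — it suffices to test each desired identity after pairing with tuples of group-like elements. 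The defining relation $\langle A\graft B,x\rangle=\langle A\otimes B,\rho_{\graft}(x)\rangle$ then turns every axiom into a statement purely about $\graft$ on $\mathcal{G}$. Accordingly, I would first state this tensor-power separation explicitly, fix the dualization dictionary ($\graft\leftrightarrow\rho_{\graft}$, the product $\cdot\leftrightarrow\Delta_{\odot}$, the product $\ast\leftrightarrow\Delta_{\ast}$, and $\Delta_{\shuffle}\leftrightarrow\shuffle$), and then run the checks.

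Concretely I would proceed identity by identity, matching each to one established fact about $\graft$. The comodule counit $(\epsilon_{\scriptscriptstyle\mathcal{B}}\otimes\mathrm{id})\rho_{\graft}=\mathrm{id}$ dualizes $1\graft C=C$ from \eqref{eq::GuinOudom1}: pairing the left side against $C\in\mathcal{G}$ and using that $\epsilon_{\scriptscriptstyle\mathcal{B}}$ is dual to the unit $1\in\mathcal{U}(\mathfrak{g})$ gives $\langle 1\graft C,x\rangle=\langle C,x\rangle$. The comodule coassociativity $(\Delta_{\ast}\otimes\mathrm{id})\rho_{\graft}=(\mathrm{id}\otimes\rho_{\graft})\rho_{\graft}$ is the dual of the key identity \eqref{shift}: pairing the left side against $A\otimes B\otimes C$ and using that $\Delta_{\ast}$ is dual to $\ast$ yields $\langle(A\ast B)\graft C,x\rangle$, while the right side yields $\langle A\graft(B\graft C),x\rangle$, and \eqref{shift} equates them. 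The multiplicativity axiom $\rho_{\graft}(x\shuffle y)=\rho_{\graft}(x)(\shuffle\otimes\shuffle)\rho_{\graft}(y)$ follows from the first Lemma: since $A\graft B$ is group-like, $\Delta_{\shuffle}(A\graft B)=(A\graft B)\otimes(A\graft B)$, so both sides pair with $A\otimes B$ to $\langle A\graft B,x\rangle\langle A\graft B,y\rangle$. The coproduct compatibility $(\mathrm{id}\otimes\Delta_{\odot})\rho_{\graft}=m_{\scriptscriptstyle{\mathcal{B}}}^{1,3}(\rho_{\graft}\otimes\rho_{\graft})\Delta_{\odot}$ dualizes the distributivity \eqref{eq::GraftIsDistributive}: one pairs against $A\otimes B\otimes C$, uses that $\Delta_{\odot}$ is dual to $\cdot$ to obtain $\langle A\graft(BC),x\rangle=\langle(A\graft B)(A\graft C),x\rangle$, and uses that $A$ is a $\shuffle$-character to split the shuffle produced by $m_{\scriptscriptstyle{\mathcal{B}}}^{1,3}$. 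Finally the two normalizations $\rho_{\graft}(1)=1\otimes1$ and $(\mathrm{id}\otimes\epsilon_{\scriptscriptstyle\mathcal{A}})\rho_{\graft}=1_{\scriptscriptstyle\mathcal{B}}\epsilon_{\scriptscriptstyle\mathcal{A}}$ reduce to $\epsilon(A\graft B)=\epsilon(A)\epsilon(B)$ and $A\graft 1=\epsilon(A)1$, both immediate from the fact that $\graft$ respects the grading together with \eqref{eq::GuinOudom1}.

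I expect the main difficulty to be organizational rather than conceptual: one must track scrupulously which coproduct ($\Delta_{\ast}$ versus $\Delta_{\odot}$) and which unit/counit is active at each step, since $\mathcal{H}^\ast$ and $\mathcal{H}_T^\ast$ coincide as algebras and differ only in their coalgebra structure. The one genuinely delicate point is the passage to the completion: the characters $\mathcal{G}$ live in $\overline{\mathcal{U}(\mathfrak{g})}$ and $\Delta_{\shuffle}$ does not take values in the completed tensor square, so I would take care to justify that pairing against group-like elements — and against their tensor products — still separates points of the \emph{graded} objects $\mathcal{H}^\ast$, $\mathcal{H}^\ast\otimes\mathcal{H}_T^\ast$, and $\mathcal{H}^\ast\otimes\mathcal{H}^\ast\otimes\mathcal{H}_T^\ast$, which is exactly where the second Lemma (in its tensor-power form) carries the weight of the argument.
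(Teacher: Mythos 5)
Your proposal is correct and follows essentially the same route as the paper: each cointeraction axiom is verified by pairing against (tensor products of) group-like elements and invoking the corresponding property of $\graft$ --- group-likeness of $A \graft B$, the distributivity \eqref{eq::GraftIsDistributive}, and $A \graft 1 = \epsilon(A)1$. The only difference is bookkeeping: the comodule coassociativity you derive from \eqref{shift} is not part of the paper's proof of this theorem but is established immediately afterwards as identity \eqref{eq::CoSubstitution}, and the tensor-power separation lemma you rightly flag is left implicit in the paper.
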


\begin{proof}
Let $x,y \in \mathcal{H}_T^{\ast}$ be arbitrary elements and let $A,B,C \in \overline{\mathcal{U}(\mathfrak{g})}$ be characters. We shall check all of the required properties:
\begin{enumerate}
\item $\rho_{\graft}(1)= 1 \otimes 1$ follows from the fact that for every character $A$ we have $\langle A,1\rangle=1$.

\item $\rho_{\graft}(x \shuffle y)=\rho_{\graft}(x) (\shuffle \otimes \shuffle) \rho_{\graft}(y)$ is seen by the computation:
\begin{align*}
	\langle A \otimes B, \rho_{\graft}(x \shuffle y) \rangle 
	&= \langle (A \graft B)\otimes (A \graft B), x \otimes y \rangle \\
	&= \langle A \otimes B \otimes A \otimes B, \rho_{\graft}(x) \otimes \rho_{\graft}(y) \rangle,
\end{align*}
where we now use the group-like properties $\Delta_{\shuffle}(A)=A \otimes A$, $\Delta_{\shuffle}(B)=B \otimes B$ to see that:
\begin{align*}
	\langle A \otimes B, \rho_{\graft}(x \shuffle y) \rangle 
	=\langle A \otimes B,\rho_{\graft}(x)(\shuffle \otimes \shuffle)\rho_{\graft}(y)\rangle.
\end{align*}

\item The property $( \mathrm{id} \otimes \epsilon)\rho_{\graft}=1\epsilon$ follows from $x \graft 1$ being nonzero if and only if $x$ has a component of degree $0$.

\item To see the last property, note that:
\begin{align*}
	\langle A \otimes B \otimes C , ( \mathrm{id} \otimes \Delta_{\odot})\rho_{\graft}(x)\rangle 
	&= \langle A \graft BC, x \rangle \\
	&=\langle (A \graft B)(A \graft C),x \rangle \\
	&=\langle A \graft B \otimes A \graft C,\Delta_{\odot}(x)\rangle\\
	&=\langle A \otimes B \otimes A \otimes C, (\rho_{\graft} \otimes \rho_{\graft})\Delta_{\odot}(x) \rangle.
\end{align*}
Next, we use again $\Delta_{\shuffle}(A)=A \otimes A$ to see that
\begin{align*}
	\langle A \otimes B \otimes C , ( \mathrm{id} \otimes \Delta_{\odot})\rho_{\graft}(x)\rangle 
	= \langle A \otimes B \otimes C, m_{\shuffle}^{1,3}(\rho_{\graft} \otimes \rho_{\graft})\Delta_{\odot}(x)\rangle.
\end{align*}
\end{enumerate}
\end{proof}

As a consequence, we note the following interesting identity.

\begin{proposition}
The coaction $\rho_{\graft}$ satisfies the identities:
\begin{align}
	(\mathrm{id} \otimes \rho_{\graft})\rho_{\graft}
	&=m_{\shuffle}^{1,2}( \mathrm{id} \otimes \rho_{\graft} \otimes \mathrm{id})
	((\Delta_{\odot} \otimes \mathrm{id})\rho_{\graft}  ) \label{eq::CoTranslation}\\
	&=(\Delta_{\ast} \otimes  \mathrm{id} )\rho_{\graft}. \label{eq::CoSubstitution}
\end{align}
\end{proposition}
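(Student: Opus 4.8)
The plan is to prove both identities by dualisation, testing each side against a triple of characters $A\otimes B\otimes C$ with $A,B,C\in\mathcal{G}$ group-like. By the second Lemma above it suffices to verify the equalities after pairing with all such triples, so the whole statement reduces to identities for the grafting product $\graft$ on group-like elements. I would first evaluate the common left-hand side: applying the defining relation $\langle A\graft B,x\rangle=\langle A\otimes B,\rho_{\graft}(x)\rangle$ twice, and using that $B\graft C$ is again group-like by the first Lemma (so that it is a legitimate character to pair against), one obtains $\langle A\otimes B\otimes C,(\mathrm{id}\otimes\rho_{\graft})\rho_{\graft}(x)\rangle=\langle A\graft(B\graft C),x\rangle$. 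The identity \eqref{eq::CoSubstitution} then falls out immediately: the shift identity \eqref{shift} gives $A\graft(B\graft C)=(A\ast B)\graft C$, and dualising the product $\ast$ into its coproduct $\Delta_{\ast}$ (the coproduct of $\mathcal{H}^{\ast}$) turns $\langle(A\ast B)\graft C,x\rangle$ into $\langle A\otimes B\otimes C,(\Delta_{\ast}\otimes\mathrm{id})\rho_{\graft}(x)\rangle$, as required.

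For \eqref{eq::CoTranslation} I would expand the right-hand side from the outside in, keeping in mind that both coproducts $\Delta_{\ast}$ and $\Delta_{\odot}$ live on the common underlying space $\mathcal{U}(\mathfrak{g})^{\ast}$ and so may act in any position. Pairing $A$ against the factor produced by $m_{\shuffle}^{1,2}$ uses that $\shuffle$ is dual to $\Delta_{\shuffle}$ together with $\Delta_{\shuffle}(A)=A\otimes A$, which splits the pairing as $\langle A,\cdot\rangle\langle A,\cdot\rangle$; collapsing the inner $\rho_{\graft}$ via its defining pairing reassembles the character $A\graft B$, and dualising $\Delta_{\odot}$ back into the concatenation product $\cdot$ yields $\langle(A\cdot(A\graft B))\graft C,x\rangle$. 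It then remains only to identify $A\ast B=A\cdot(A\graft B)$ for group-like $A$, which is exactly the definition \eqref{assoprod} of $\ast$ specialised to $\Delta_{\shuffle}(A)=A\otimes A$. This matches the left-hand side $\langle A\graft(B\graft C),x\rangle=\langle(A\ast B)\graft C,x\rangle$ computed above and closes the chain.

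The main obstacle I anticipate is purely the bookkeeping in this last computation: one must track a fourfold tensor, keep straight which coproduct acts in each slot, and carry out the dualisation of $m_{\shuffle}^{1,2}$ correctly using group-likeness of $A$. Once these are handled carefully, both equalities collapse onto the single algebraic fact that $A\ast B=A\cdot(A\graft B)$ holds on group-like elements, combined with the shift identity \eqref{shift}; no further input beyond the two Lemmas and the Guin--Oudom relations is needed.
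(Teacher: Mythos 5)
Your proposal is correct and follows essentially the same route as the paper: both reduce the identities to pairings against triples of group-like characters, evaluate the left-hand side as $\langle A\graft(B\graft C),x\rangle$, and then invoke the shift identity \eqref{shift} together with the specialisation $A\ast B=A\cdot(A\graft B)$ of \eqref{assoprod} to group-like $A$, dualising the products back into the corresponding coproducts. The only (immaterial) difference is that you expand the right-hand side of \eqref{eq::CoTranslation} inward to meet the left-hand side, whereas the paper unwinds the left-hand side outward; your explicit appeal to the first Lemma to justify that $B\graft C$ is again a character is a welcome piece of care that the paper leaves implicit.
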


\begin{proof}
Let $x \in \mathcal{H}_T^{\ast}$ be an arbitrary element and let $A,B,C \in \overline{\mathcal{U}(\mathfrak{g})}$ be characters, we compute:
\begin{align*}
	\langle A \otimes B \otimes C, (\text{id} \otimes \rho_{\graft})\rho_{\graft}(x) \rangle 
	&= \langle A \graft (B \graft C),x \rangle \\
	&= \langle (A(A \graft B))\graft C, x \rangle \\
	&= \langle A(A \graft B) \otimes C , \rho_{\graft}(x) \rangle \\
	&= \langle A \otimes A \graft B \otimes C, (\Delta_{\odot} \otimes \text{id})\rho_{\graft}(x) \rangle\\
	&= \langle A \otimes A \otimes B \otimes C, (\text{id} \otimes 
	\rho_{\graft} \otimes \text{id})( (\Delta_{\odot} \otimes \text{id})	\rho_{\graft}(x)  ) \rangle \\
	&= \langle A \otimes B \otimes C, m_{\shuffle}^{1,2}(\text{id} \otimes \rho_{\graft} 
	\otimes \text{id})( (\Delta_{\odot} \otimes \text{id})\rho_{\graft}(x)  ) \rangle,
\end{align*}
where we used identity \eqref{shift}, i.e., $A \graft (B \graft C) = (A(A\graft B))\graft C$. For the second identity, note that $A(A \graft B)=A \ast B$.
\end{proof}

\begin{remark}
In \cite{Rahm2021RP}, identity \eqref{eq::CoTranslation} was shown to characterise the notion of translation on rough paths. Identifying $\mathcal{H}^{\ast}$ and $\mathcal{H}_T^{\ast}$ as vector spaces lets us see $\rho$ as a map $\mathcal{H}_T^{\ast} \to \mathcal{H}_T^{\ast} \otimes \mathcal{H}_T^{\ast}$, then identity \eqref{eq::CoTranslation} gives a cotranslation of $\mathcal{H}_T^{\ast}$ onto itself. From the viewpoint of duality between translation actions and translation coactions, this identity corresponds to the translation identity $T_{\mathbf{v}} \circ T_{\mathbf{u}} = T_{\mathbf{v}+T_{\mathbf{v}}(u)}$. We elaborate more on this in a later section.
\end{remark}


\section{Munthe-Kaas--Wright Hopf algebra of planar forests}
\label{sec:MKW}

The Munthe-Kaas--Wright (MKW) Hopf algebra was introduced in \cite{Munthe-KaasWright2008}. In a nutshell, it is to numerical integration methods on homogeneous spaces, what the Butcher--Connes--Kreimer Hopf algebra is to numerical integration methods on Euclidean spaces. The MKW Hopf algebra can be obtained by extending the Guin--Oudom construction to the free post-Lie algebra. 

Let $\mathcal{C}$ be a set, the free post-Lie algebra generated by $\mathcal{C}$ can be described by planar rooted trees whose vertices are decorated by elements from the set $\mathcal{C}$. Recall that a planar rooted tree is a rooted tree $\tau$ with vertex set $N(\tau)$ and an embedding in the plane. For example, the two following trees are isomorphic as decorated graphs but different as planar trees:
\begin{align*}
\Forest{[a[b][c[d]]]} \quad \text{and} \quad \Forest{[a[c[d]][b]]}.
\end{align*}
Let $\mathcal{PT}^{\mathcal{C}}$ denote the vector space spanned by planar trees decorated by $\mathcal{C}$. Define the left grafting product by letting $\tau_1 \graft \tau_2$ denote the sum over all ways to graft the root of the planar rooted tree $\tau_1$ onto a vertex of the planar rooted tree $\tau_2$ such that $\tau_1$ is leftmost in the planar embedding. For example:
\begin{align*}
	\Forest{[a[b]]}\graft \Forest{[c[d][e[f]]]}
	=\Forest{[c[a[b]][d][e[f]]]}
		+\Forest{[c[d[a[b]]][e[f]]]}
		+\Forest{[c[d][e[a[b]][f]]]}
		+\Forest{[c[d][e[f[a[b]]]]]}.
\end{align*}
Letting $Lie(\mathcal{PT}^{\mathcal{C}})$ denote the free Lie algebra generated by $\mathcal{PT}^{\mathcal{C}}$ and extending the left grafting operator by identities \eqref{postLie1} and \eqref{postLie2} 
\begin{align*}
	[\tau_1,\tau_2]\graft \tau_3
	&= \tau_1 \graft (\tau_2 \graft \tau_3) - (\tau_1 \graft \tau_2)\graft \tau_3 
				- \tau_2 \graft (\tau_1 \graft \tau_3) + (\tau_2 \graft \tau_1)\graft \tau_3,\\ 
	\tau_1 \graft [\tau_2,\tau_3]
	&= [\tau_1 \graft \tau_2,\tau_3] + [\tau_2,\tau_1 \graft \tau_3], 
\end{align*}
for all $\tau_1,\tau_2,\tau_3 \in \mathcal{PT}^{\mathcal{C}}$, gives the free post-Lie algebra. 

We now apply the Guin--Oudom construction at the level of post-Lie algebra \cite{EbrahimiFardLundervoldMuntheKaas2015}. Note that the universal enveloping algebra of $Lie(\mathcal{PT}^{\mathcal{C}})$ can be described by \textit{ordered forests} $\mathcal{OF}^{\mathcal{C}}$, i.e., ordered sequences of planar rooted trees. The extension of the left grafting product to ordered forests is given by letting $\omega_1 \graft \omega_2$ be the sum over all ways to leftmost graft all roots of $\omega_1$ onto vertices in $\omega_2$ such that if two roots in $\omega_1$ are grafted onto the same vertex, then their planar order is preserved. For example:
\begin{align*}
	\Forest{[a[b]]}\Forest{[c]} \graft \Forest{[d[e]]}\Forest{[f]}
	=\Forest{[d[a[b]][c][e]]}\Forest{[f]}+\Forest{[d[a[b]][e[c]]]}\Forest{[f]}
		+\Forest{[d[a[b]][e]]}\Forest{[f[c]]}+\Forest{[d[c][e[a[b]]]]}\Forest{[f]}
		+\Forest{[d[e[a[b]][c]]]}\Forest{[f]}+\Forest{[d[e[a[b]]]]}\Forest{[f[c]]}
		+\Forest{[d[c][e]]}\Forest{[f[a[b]]]}+\Forest{[d[e[c]]]}\Forest{[f[a[b]]]}
		+\Forest{[d[e]]}\Forest{[f[a[b]][c]]}.
\end{align*}

The structure $(\mathcal{OF}^{\mathcal{C}},\graft,\cdot)$ is that of the free D-algebra \cite{Munthe-KaasWright2008}. The associative product defined on $\mathcal{OF}^{\mathcal{C}}$ by
\begin{align}
\label{GLprod}
	A \ast B = A_{(1)} \cdot (A_{(2)}\graft B)
\end{align}
is called the planar Grossman--Larson product, and the Hopf algebra $\mathcal{H}_{\scriptscriptstyle{\text{GL}}}=(\mathcal{OF}^{\mathcal{C}},\ast,\Delta_{\shuffle})$ is called the planar Grossman--Larson Hopf algebra. For example
\begin{align*}
	\tau_1 \ast \tau_2 
	&= \tau_1 \tau_2 + \tau_1 \graft \tau_2\\  
	\tau_1 \ast \tau_2  \ast \tau_3 
	&= (\tau_1 \ast \tau_2) \tau_3 + \tau_1 (\tau_2 \graft \tau_3) + \tau_2 (\tau_1 \graft \tau_3) + (\tau_1 \ast \tau_2) \graft \tau_3\\
	&= \tau_1 \tau_2 \tau_3 + (\tau_1 \graft \tau_2) \tau_3
		+ \tau_1 (\tau_2 \graft \tau_3) + \tau_2 (\tau_1 \graft \tau_3) 
		+ (\tau_1\tau_2) \graft \tau_3 + (\tau_1 \graft \tau_2) \graft \tau_3\\
	&= \tau_1 \tau_2 \tau_3 + (\tau_1 \graft \tau_2) \tau_3
		+ \tau_1 (\tau_2 \graft \tau_3) + \tau_2 (\tau_1 \graft \tau_3) 
		+ \tau_1 \graft (\tau_2 \graft \tau_3).	
\end{align*}
Observe that the antipode of $\mathcal{H}_{\scriptscriptstyle{\text{GL}}}$, which we denote $S_*$, is different from the original antipode $S$. The Grossman--Larson product \eqref{GLprod} can be inverted
\begin{align}
\label{GLprodinv1}
	A \cdot B = A_{(1)} \ast (S_*(A_{(2)})\graft B),
\end{align}
from which we deduce the recursion
\begin{align}
	\tau_1 \cdot A 
	&=  \tau_{1} * A  - \tau_1 \graft A. \label{GLprodinv2}
\end{align}
Using $A \graft 1 = \epsilon(A)1$ and $1 \graft A = A$, for any $A \in \mathcal{H}_{\scriptscriptstyle{\text{GL}}}$, one can show the formula  \cite{Li2022postHopf}
\begin{align}
	S_*(A)	
	&=S_*(A_{(1)}) \graft S(A_{(2)}), 		\nonumber
\end{align}
which implies the recursion 
\begin{align}
	S_*(A)
	&=S(A) + S_*(A^{(1)}) \graft S(A^{(2)}). 	\label{GLantipode}
\end{align}
For example 
\begin{align}
	S_*(\tau) 
	&= S(\tau) = -\tau \nonumber \\
	S_*(\tau_1\tau_2) 
	&= S(\tau_1\tau_2) + \tau_1 \graft \tau_2 + \tau_2 \graft \tau_1 = \tau_2 \ast\tau_1 + \tau_1 \graft \tau_2 .\nonumber
\end{align}
The MKW Hopf algebra $\mathcal{H}_{\scriptscriptstyle{\text{MKW}}}=(\mathcal{OF}^{\mathcal{C}},\shuffle,\Delta_{\scriptscriptstyle{\text{MKW}}})$ is the graded dual of $\mathcal{H}_{\scriptscriptstyle{\text{GL}}}$. The Munthe-Kaas--Wright coproduct, $\Delta_{\scriptscriptstyle{\text{MKW}}}$, can be described by left-admissible cuts \cite{Munthe-KaasWright2008}. Let $\tau$ be a planar rooted tree, and $c$ be a subset of edges in $\tau$. We say that $c$ is left-admissible if:
\begin{enumerate}
\item For each edge $e \in c$, every other edge outgoing from the same vertex as $e$ and to the left of $e$ in the planar embedding, is also in $c$.
\item Every path from the root of $\tau$ to any of its leaves, contains at most one edge in $c$.
\end{enumerate}
For $c$ a left-admissible cut, we denote the trunk $T^c(\tau)$, which is the connected component of $\tau$ that contains the root after the edges of $c$ have been removed. The pruned part, denote by $P^c(\tau)$, is made of the subtrees that result from deleting the edges in $c$. Note that among those subtrees that are cut off from the same vertex we keep the planar order. As a last step, the resulting forests that are cut off from different vertices are shuffled together to form $P^c(\tau)$. We can then describe $\Delta_{\scriptscriptstyle{\text{MKW}}}$ on planar rooted trees by:
\begin{align*}
	\Delta_{\scriptscriptstyle{\text{MKW}}}(\tau)
	=\sum_{c \text{ left-adm.~cut}} P^{c}(\tau) \otimes T^c(\tau).
\end{align*}
For example:
\begin{align*}
	\Delta_{\scriptscriptstyle{\text{MKW}}}(\Forest{[a[b][c[d]]]})
	=1 \otimes \Forest{[a[b][c[d]]]} 
		+ \Forest{[b]} \otimes \Forest{[a[c[d]]]} 
		+ \Forest{[d]} \otimes \Forest{[a[b][c]]} 
		+ \Forest{[b]}\shuffle \Forest{[d]} \otimes \Forest{[a[c]]} 
		+ \Forest{[b]}\Forest{[c[d]]} \otimes \Forest{[a]} 
		+ \Forest{[a[b][c[d]]]} \otimes 1.
\end{align*}
Now let $B_+ : \mathcal{OF}^{\mathcal{C}} \to \mathcal{PT}^{\mathcal{C}}$ denote the map given by grafting all trees in the input forest onto a single root, for example:
\begin{align*}
	B_+(\Forest{[a[b]]}\Forest{[c]})=\Forest{[[a[b]][c]]},
\end{align*}
and let $B_- : \mathcal{PT}^{\mathcal{C}} \to \mathcal{OF}^{\mathcal{C}}$ denote the inverse map. We may add a decoration, $B_+^a$, $a \in {\mathcal{C}}$, to indicate the decoration of the new root. Then $\Delta_{\scriptscriptstyle{\text{MKW}}}$ applied to a forest $w \in \mathcal{OF}^{\mathcal{C}}$ can be described by:
\begin{align*}
	\Delta_{\scriptscriptstyle{\text{MKW}}}(\omega)
	=( \mathrm{id} \otimes B_-)(\Delta_{\scriptscriptstyle{\text{MKW}}}(B_+(\omega)) - B_+(\omega)\otimes 1 ).
\end{align*}
Note that the decoration of the added root, $B_+(\omega)$, plays no role in this identity as the root gets removed by $B_-$. This identity is saying that $\Delta_{\scriptscriptstyle{\text{MKW}}}$ applied to a forest will perform left-admissible cuts on all trees in the forest, where a left-admissible cut can also cut between the different trees in the forest. For example:
\begin{align*}
	\Delta_{\scriptscriptstyle{\text{MKW}}}(\Forest{[a[b]]}\Forest{[c[d]]}\Forest{[f]})
	&=1 \otimes \Forest{[a[b]]}\Forest{[c[d]]}\Forest{[f]} 
		+ \Forest{[b]} \otimes \Forest{[a]}\Forest{[c[d]]}\Forest{[f]} 
		+ \Forest{[d]} \otimes \Forest{[a[b]]}\Forest{[c]}\Forest{[f]} 
		+ \Forest{[a[b]]} \otimes \Forest{[c[d]]}\Forest{[f]}\\
	&\quad + \Forest{[b]}\shuffle \Forest{[d]} \otimes \Forest{[a]}\Forest{[c]}\Forest{[f]}
		+\Forest{[a[b]]}\shuffle \Forest{[d]} \otimes \Forest{[c]}\Forest{[f]}
		+\Forest{[a[b]]}\Forest{[c[d]]} \otimes \Forest{[f]} 
		+\Forest{[a[b]]}\Forest{[c[d]]}\Forest{[f]} \otimes 1.
\end{align*}

\section{Hopf algebras with a natural growth operator} 
\label{sect:Foissy}

Following Foissy \cite{Foissy2002,FoissyFrench}, we introduce the notion of natural growth operator on a connected graded Hopf algebra $\mathcal{H}$. We will then show that the MKW Hopf algebra has a natural growth operator, which allows us to apply Foissy's results. Note that we employ Sweedler's notation for both the coproduct, $\Delta(x) = x_{(1)} \otimes x_{(2)}$, as well as the reduced coproduct on $\mathcal{H}$
$$
	\hat{\Delta}(x) = x^{(1)} \otimes x^{(2)} := \Delta(x) - x \otimes 1 - 1 \otimes x.
$$
For more details on combinatorial bialgebras, including examples and applications, we refer the reader to the recent textbook \cite{CP2021} and the survey article \cite{Manchon2008}.

\begin{definition}
\label{def:naturalgrowth}
Let $\mathcal{H}$ be a connected graded Hopf algebra with coproduct $\Delta: \mathcal{H} \to \mathcal{H} \otimes \mathcal{H}$. If the bilinear operation $\top: \mathcal{H} \otimes \mathcal{H} \to \mathcal{H}$ satisfies the equation 
\begin{equation}
\label{eq::NaturalGrowth}
	\hat{\Delta}(x \top y) = x \otimes y + x^{(1)} \otimes (x^{(2)} \top y ),
\end{equation}
for $x \in \mathcal{H}$ and every primitive element $y \in P(\mathcal{H}):=\{x \in \mathcal{H}\ |\ \hat{\Delta}(x)=0 \} \subset  \mathcal{H}$, then we say that $\top$ is a natural growth operator for the Hopf algebra $\mathcal{H}$.
\end{definition}

Let $p_1,\ldots,p_i$ be primitive elements in the Hopf algebra $(\mathcal{H},\top)$ with a natural growth operator. As the latter is non-associative, we shall use the notation:
\begin{align*}
	p_i \top \cdots \top p_1 := (p_i \top \cdots \top p_2) \top p_1.
\end{align*}

\begin{lemma} [\cite{Foissy2002}]  
\label{lemma::Deconcatenation}
Let $p_1,\ldots,p_i$ be primitive elements in the Hopf algebra $(\mathcal{H},\top)$ with natural growth operator, then:
\begin{align*}
	\hat{\Delta}(p_i \top \cdots \top p_1)
	=\sum_{j=1}^{i-1}(p_i \top \cdots \top p_{j+1}) \otimes (p_j \top \cdots \top p_1).
\end{align*}
\end{lemma}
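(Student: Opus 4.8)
The plan is to prove the formula by induction on $i$, the number of primitive elements, using the defining property \eqref{eq::NaturalGrowth} of the natural growth operator together with the left-nested convention $p_i \top \cdots \top p_1 = (p_i \top \cdots \top p_2) \top p_1$. The base case $i=1$ is immediate: $p_1$ is primitive, so the left-hand side is $\hat{\Delta}(p_1)=0$, while the right-hand sum is empty. (Equivalently, one may verify $i=2$ directly, where \eqref{eq::NaturalGrowth} applied with the primitive element $x=p_2$ gives $\hat{\Delta}(p_2 \top p_1) = p_2 \otimes p_1$, matching the single $j=1$ summand.)

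For the inductive step I would isolate the last grafting. Setting $x := p_i \top \cdots \top p_2 \in \mathcal{H}$ and $y := p_1$, which is primitive, the left-nested convention gives $p_i \top \cdots \top p_1 = x \top y$, so \eqref{eq::NaturalGrowth} applies and yields
\begin{align*}
	\hat{\Delta}(p_i \top \cdots \top p_1) = x \otimes p_1 + x^{(1)} \otimes (x^{(2)} \top p_1).
\end{align*}
The first term $x \otimes p_1 = (p_i \top \cdots \top p_2) \otimes p_1$ is precisely the $j=1$ summand in the target formula, so it remains only to recognise the second term as the sum over $j=2,\ldots,i-1$.

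To this end I would apply the induction hypothesis to the $i-1$ primitive elements $p_2,\ldots,p_i$, which produces
\begin{align*}
	\hat{\Delta}(x) = x^{(1)} \otimes x^{(2)} = \sum_{j=2}^{i-1} (p_i \top \cdots \top p_{j+1}) \otimes (p_j \top \cdots \top p_2).
\end{align*}
Substituting into the second term above and using that, by the very definition of the left-nested notation, $(p_j \top \cdots \top p_2) \top p_1 = p_j \top \cdots \top p_1$, gives exactly $\sum_{j=2}^{i-1} (p_i \top \cdots \top p_{j+1}) \otimes (p_j \top \cdots \top p_1)$. Adjoining the $j=1$ term then completes the sum from $j=1$ to $i-1$, which is the claimed identity.

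The argument is essentially bookkeeping and I do not expect a genuine obstacle; the only points demanding care are the index shift in the induction hypothesis (the primitives there are indexed $2,\ldots,i$, so the cut index runs from $2$ to $i-1$, and the extremal cases $j=i-1$ and $j=1$ collapse correctly to single factors $p_i$ and $p_1$), and the observation that \eqref{eq::NaturalGrowth} is legitimate at every stage because its right-hand argument is always the primitive element $p_1$.
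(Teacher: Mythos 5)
Your induction is correct: the base case, the application of \eqref{eq::NaturalGrowth} with $x=p_i\top\cdots\top p_2$ and the primitive $y=p_1$, and the index shift in the induction hypothesis all check out. This is precisely the argument the paper intends when it states that the lemma ``follows directly from identity \eqref{eq::NaturalGrowth}''; you have simply written out the bookkeeping that the paper leaves implicit.
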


\begin{proof}
It was observed in \cite{Foissy2002} that this intriguing property follows directly from identity \eqref{eq::NaturalGrowth}.
\end{proof}

\begin{proposition}  [\cite{Foissy2002}]  \label{prop::Projection}
\label{Prop::Projection}
A projection  $\pi: \mathcal{H} \to P(\mathcal{H})$ onto the set of primitive elements $P(\mathcal{H})$ can be computed recursively:
\begin{align*}
	\pi(x)=x - x^{(1)} \top \pi(x^{(2)}).
\end{align*}
\end{proposition}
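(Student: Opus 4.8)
The plan is to verify the two defining properties of a projection onto $P(\mathcal{H})$ simultaneously, by induction on the degree of a homogeneous element $x$ in the augmentation ideal $\ker\epsilon$ (setting $\pi(1)=0$). The recursion is well-founded because in a connected graded Hopf algebra both legs of the reduced coproduct $\hat{\Delta}(x)=x^{(1)}\otimes x^{(2)}$ strictly lower the degree, so $\pi(x^{(2)})$ only ever refers to arguments of smaller degree. That $\pi$ \emph{fixes} $P(\mathcal{H})$ is immediate: if $\hat{\Delta}(x)=0$ then the correction term $x^{(1)}\top\pi(x^{(2)})$ vanishes and $\pi(x)=x$. This also furnishes the base of the induction, since every degree-one element is primitive. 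Together with the claim below that $\pi$ lands in $P(\mathcal{H})$, this yields $\pi^2=\pi$ with image exactly $P(\mathcal{H})$, i.e. that $\pi$ is a projection onto the primitives.

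It remains to show $\pi(x)\in P(\mathcal{H})$, that is $\hat{\Delta}(\pi(x))=0$. First I would apply $\hat{\Delta}$ to the defining recursion to get $\hat{\Delta}(\pi(x))=\hat{\Delta}(x)-\hat{\Delta}\big(x^{(1)}\top\pi(x^{(2)})\big)$. By the induction hypothesis $\pi(x^{(2)})$ is primitive in each Sweedler term (its argument has strictly smaller degree), so the natural growth identity \eqref{eq::NaturalGrowth} applies termwise with first argument $x^{(1)}$ and primitive second argument $\pi(x^{(2)})$:
\[
\hat{\Delta}\big(x^{(1)}\top\pi(x^{(2)})\big)
= x^{(1)}\otimes\pi(x^{(2)}) + (x^{(1)})^{(1)}\otimes\big((x^{(1)})^{(2)}\top\pi(x^{(2)})\big).
\]
Next I would re-expand the term $x^{(1)}\otimes\pi(x^{(2)})$ using the recursion defining $\pi(x^{(2)})$, obtaining $x^{(1)}\otimes\pi(x^{(2)})=\hat{\Delta}(x)-x^{(1)}\otimes\big((x^{(2)})^{(1)}\top\pi((x^{(2)})^{(2)})\big)$. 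Substituting both expressions back, the two copies of $\hat{\Delta}(x)$ cancel, leaving precisely the difference of the two ``tail'' terms $x^{(1)}\otimes\big((x^{(2)})^{(1)}\top\pi((x^{(2)})^{(2)})\big)$ and $(x^{(1)})^{(1)}\otimes\big((x^{(1)})^{(2)}\top\pi(x^{(2)})\big)$.

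The decisive step is to recognise these two tails as the two sides of the coassociativity of the reduced coproduct: one is governed by $(\mathrm{id}\otimes\hat{\Delta})\hat{\Delta}$ and the other by $(\hat{\Delta}\otimes\mathrm{id})\hat{\Delta}$, and in the common triple Sweedler notation $x^{(1)}\otimes x^{(2)}\otimes x^{(3)}$ each equals $x^{(1)}\otimes(x^{(2)}\top\pi(x^{(3)}))$. Hence the two tails coincide and cancel, giving $\hat{\Delta}(\pi(x))=0$ and completing the induction. I expect the main obstacle to be the careful Sweedler bookkeeping needed to line up the ``growth tail'' produced by \eqref{eq::NaturalGrowth} with the ``recursion tail'' produced by re-expanding $\pi(x^{(2)})$; once coassociativity of $\hat{\Delta}$ is invoked, the cancellation is purely formal.
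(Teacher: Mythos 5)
Your argument is correct and is essentially the paper's proof: the same induction on degree, the same use of the natural growth identity \eqref{eq::NaturalGrowth} with the (inductively primitive) element $\pi(x^{(2)})$ in the second slot, and the same appeal to coassociativity of $\hat{\Delta}$ to match the two Sweedler tails. The only difference is presentational — the paper shows $\hat{\Delta}(y)=\hat{\Delta}\bigl(y^{(1)}\top\pi(y^{(2)})\bigr)$ and subtracts, while you expand $\hat{\Delta}(\pi(x))$ directly and cancel, additionally spelling out well-foundedness and idempotence, which the paper leaves implicit.
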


\begin{proof}
The proof given in \cite{Foissy2002} in the specific context of the BCK Hopf algebra $\mathcal{H}_{\scriptscriptstyle{\text{BCK}}}$ of non-planar rooted trees applies here. The equality $\hat{\Delta}(\pi(x))=0$ is clear if $x$ has degree one. Suppose that the equality holds for all elements up to degree $k$ and let $y$ be an element of degree $k+1$. Then:
\begin{align*}
	\hat{\Delta}(y)
	&= y^{(1)} \otimes y^{(2)}\\
	&= y^{(1)} \otimes \big(\pi(y^{(2)}) + (y^{(2)})^{(1)} \top \pi( (y^{(2)})^{(2)}) \big).
\end{align*}
Here the induction hypothesis applies because $y^{(2)}$ has degree at most $k$. Using coassociativity, we obtain:
\begin{align*}
	\hat{\Delta}(y)
	=y^{(1)}\otimes \pi(y^{(2)})+(y^{(1)})^{(1)}\otimes \big((y^{(1)})^{(2)}\top \pi(y^{(2)}) \big).
\end{align*}
The natural growth property \eqref{eq::NaturalGrowth} implies then that $\hat{\Delta}(y)=\hat{\Delta}(y^{(1)}\top \pi(y^{(2)}))$. Hence, the difference, $\pi(y)=y-y^{(1)}\top \pi(y^{(2)})$, is primitive. The statement follows by mathematical induction.
\end{proof}

\begin{example}
\label{ex:piBCK}
As mentioned before, in \cite{Foissy2002} the primitive elements in the BCK Hopf algebra $\mathcal{H}_{\scriptscriptstyle{\rm{BCK}}}$ are given in terms of the projection $\pi$. A few examples are in order.
$$
\pi(\Forest{[]}) =\Forest{[]}
\qquad	
\pi(\Forest{[[]]}) =0
\qquad
\pi(\Forest{[]}\Forest{[]}) = \Forest{[]}\Forest{[]} - 2\Forest{[[]]}
\qquad
\pi(\Forest{[[[]]]}) = \pi(\Forest{[[][]]}) = \pi(\Forest{[]}\Forest{[[]]}) = 0
\qquad
\pi(\Forest{[]}\Forest{[]}\Forest{[]})=\Forest{[]}\Forest{[]}\Forest{[]}-3\Forest{[]}\Forest{[[]]}+3\Forest{[[[]]]}.
$$
See Remark \ref{rmk:BCK} below, where we will further elaborate on the precise definition of $\top$, in the context of the MKW Hopf algebra -- which also clarifies its definition in the BCK case. 
\end{example}

\begin{example}
\label{ex:shuffle}
Consider the alphabet $A$ and the shuffle Hopf algebra $\mathcal{H}_{\shuffle}=(T(A),\shuffle,\Delta)$, with deconcatenation as coproduct \cite{Reutenauer1993}. Defining $\top$ to be the concatenation product, gives a natural growth operator:
\begin{align*}
	\hat{\Delta}(w \top x)=\hat{\Delta}(wx) = w \otimes x + w^{(1)} \otimes w^{(2)} \top x,
\end{align*}
whenever $x$ is a primitive element, i.e., a letter in $A$. Indeed, for $a \in A$ we compute 
\allowdisplaybreaks
\begin{align*}
	\hat{\Delta}(w \top a)
	&= \Delta(wa) - \mathbf{1} \otimes wa - wa \otimes \mathbf{1} \\
	&=(w \otimes \mathrm{id}) \Delta(a) + \Delta(w)( \mathrm{id} \otimes a) 
		- w \otimes a - \mathbf{1} \otimes wa - wa \otimes \mathbf{1}\\ 
	&= w \otimes a + wa \otimes \mathbf{1} + \Delta(w) (\mathrm{id} \otimes a)
		- w \otimes a - \mathbf{1} \otimes wa - wa \otimes \mathbf{1}\\
	&=\Delta(w)(\mathrm{id} \otimes a) - \mathbf{1} \otimes wa\\
	&=\big(\sum_{i=0}^n w_{i} \otimes w_{n-i}\big)(\mathrm{id}\otimes a)  - \mathbf{1} \otimes wa\\
	&=\big(\sum_{i=1}^n w_{i} \otimes w_{n-i}\big)(\mathrm{id} \otimes a) \\
	&= w \otimes a + \big(\sum_{i=1}^{n-1} w_{i} \otimes w_{n-i}\big)(\mathrm{id} \otimes a) \\
	&= w \otimes a + w^{(1)} \otimes w^{(2)}a \\
	&= w \otimes a + w^{(1)} \otimes (w^{(2)} \top a).
\end{align*}

Lemma \ref{lemma::Deconcatenation} just describes the usual definition of deconcatenation on the tensor algebra $T(A)$. The projection $\pi$ defined in Proposition \ref{Prop::Projection} is the identity map on letters and sends words of length greater than one to zero. 
\end{example}

We remark that for words $w,v \in T(A)$, deconcatenation and concatenation satisfy 
\begin{align}
	\Delta(wv)=(w \otimes \mathrm{id}) \Delta(v) 
				+ \Delta(w)( \mathrm{id} \otimes v) 
				- w \otimes v. \label{eq::InfinitesimalBialgebra}
\end{align}
This identity is known to define a unital infinitesimal bialgebra \cite{Aguiar2000b,Aguiar2000a}. In this respect we remark that a quadruple $(\mathcal{A},\ast,\odot,\Delta)$ is called a 2-associative bialgebra if $(\mathcal{A},\ast,\Delta)$ is a bialgebra and $(\mathcal{A},\odot,\Delta)$ is a unital infinitesimal bialgebra. By letting the second argument in \eqref{eq::InfinitesimalBialgebra} be primitive, we see that any 2-associative Hopf algebra is a Hopf algebra with a natural growth operator. It was shown in \cite{LodayRonco2004} that 2-associative Hopf algebras are exactly the cofree Hopf algebras, which are in bijection with $B_{\infty}$-algebras. See also \cite{FoissyMalvenutoPatras2014}. Note that being 2-associative is a stricter requirement than having a natural growth operator, since a natural growth operator does not need to be associative. However a natural growth operator can be used to define a 2-associative Hopf algebra.

\begin{lemma} \label{lemma::FiDirect}
For the maps $F_i: P(\mathcal{H})^{\otimes i} \to \mathcal{H}$ defined by
$$
	F_i(p_i \otimes \cdots \otimes p_1) := p_i \top \cdots \top p_1
$$	
the following holds
\begin{align*}
	\hat{\Delta}^{i-1}\circ F_i 
	&= \mathrm{id}_{P(\mathcal{H})^{\otimes i}},\\
	\hat{\Delta}^{i+k} \circ F_i
	&= 0, \qquad\ k \geq 0. 
\end{align*}
Furthermore, each $F_i$ is injective and the sum
\begin{align*}
\mathbb{K}1 + \sum_{i=1}^{\infty}\text{Im}(F_i)
\end{align*}
is direct.
\end{lemma}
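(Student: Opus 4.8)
The plan is to prove the two coproduct identities first, then use them to extract both the injectivity of each $F_i$ and the directness of the sum. Recall that Lemma \ref{lemma::Deconcatenation} computes the single reduced coproduct of $p_i \top \cdots \top p_1$ as a sum of deconcatenation-type terms. The natural strategy is to iterate this: applying $\hat{\Delta}^{i-1}$ should strip the product all the way down to the pure tensor $p_i \otimes \cdots \otimes p_1$, while one further application must kill it.

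First I would establish $\hat{\Delta}^{i-1} \circ F_i = \mathrm{id}$ by induction on $i$. The base case $i=1$ is trivial since $F_1 = \mathrm{id}_{P(\mathcal{H})}$ and $\hat{\Delta}^0 = \mathrm{id}$. For the inductive step I would apply Lemma \ref{lemma::Deconcatenation} once to obtain $\hat{\Delta}(p_i \top \cdots \top p_1) = \sum_{j=1}^{i-1} (p_i \top \cdots \top p_{j+1}) \otimes (p_j \top \cdots \top p_1)$, and then apply $\hat{\Delta}^{i-2}$ to the \emph{first} tensor factor only (using coassociativity of the reduced coproduct to interpret $\hat{\Delta}^{i-1}$ as $(\hat{\Delta}^{i-2} \otimes \mathrm{id}) \circ \hat{\Delta}$). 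By the induction hypothesis applied to the left factor $p_i \top \cdots \top p_{j+1}$, which is a $\top$-product of $i-j$ primitives, $\hat{\Delta}^{i-2}$ annihilates every term except the one where the left factor is a single primitive, i.e. $j = i-1$; that surviving term yields exactly $p_i \otimes (p_{i-1} \top \cdots \top p_1)$, and inducting the remaining factor gives the full tensor. The key point making this clean is that $\hat{\Delta}^{i-2}$ of a $\top$-product of $k$ primitives is $\mathrm{id}$ when $k = i-1$ and is zero when $k < i-1$; the latter follows from the inductive hypothesis combined with the second identity. Thus I would prove both identities simultaneously by a joint induction, since the vanishing statement $\hat{\Delta}^{i+k} \circ F_i = 0$ for $k \geq 0$ is precisely what is needed to discard the unwanted terms above. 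Concretely, once $\hat{\Delta}^{i-1} \circ F_i = \mathrm{id}_{P(\mathcal{H})^{\otimes i}}$ holds, one more reduced coproduct gives $\hat{\Delta}^i \circ F_i = \hat{\Delta} \circ \mathrm{id}_{P(\mathcal{H})^{\otimes i}}$, and since the image of $\hat{\Delta}^{i-1} \circ F_i$ lands in $P(\mathcal{H})^{\otimes i}$ — tensor powers of primitives, each of which is killed by $\hat{\Delta}$ — the composite vanishes, and higher $k$ follow trivially.

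Injectivity of $F_i$ is then immediate: since $\hat{\Delta}^{i-1} \circ F_i = \mathrm{id}_{P(\mathcal{H})^{\otimes i}}$ is injective, $F_i$ must itself be injective, because a composite cannot be injective unless its first map is. For directness of $\mathbb{K}1 + \sum_{i=1}^{\infty} \mathrm{Im}(F_i)$, the plan is to exploit the grading by the number of reduced-coproduct applications needed to annihilate an element. Each $\mathrm{Im}(F_i)$ lives in the kernel of $\hat{\Delta}^i$ but is \emph{not} contained in the kernel of $\hat{\Delta}^{i-1}$ (since $\hat{\Delta}^{i-1}$ restricts to the identity on the tensor factors there), which separates the images by their ``coradical depth''. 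Suppose a finite sum $c_0 1 + \sum_i F_i(z_i) = 0$ with $z_i \in P(\mathcal{H})^{\otimes i}$; let $N$ be the largest index with $z_N \neq 0$. Applying $\hat{\Delta}^{N-1}$ kills $1$ and every $F_i(z_i)$ with $i < N$ (by the vanishing identity, since $N-1 \geq i$), and sends $F_N(z_N)$ to $z_N$ by the first identity, forcing $z_N = 0$, a contradiction; peeling off terms one at a time and finally the scalar $c_0$ (via the counit or degree-zero projection) gives directness.

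The main obstacle I anticipate is the careful bookkeeping in the inductive step of the first identity: one must correctly split $\hat{\Delta}^{i-1}$ via coassociativity and verify that the induction hypothesis on $\top$-products of \emph{fewer} than $i$ primitives applies uniformly to all the left tensor factors appearing in Lemma \ref{lemma::Deconcatenation}, while simultaneously invoking the not-yet-fully-proved vanishing identity to discard the $j < i-1$ terms. This is why the two identities should be proved together in a single induction rather than sequentially — each step of one relies on the lower-index instance of the other — and getting the logical ordering of the joint induction right is the delicate part.
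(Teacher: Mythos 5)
Your proof is correct and follows exactly the route the paper indicates: the paper's own ``proof'' is merely a citation to Foissy asserting that the lemma is a consequence of Lemma \ref{lemma::Deconcatenation}, and your joint induction on the two identities fills in precisely that derivation, with the injectivity and directness claims then read off from the identities as you describe. One bookkeeping slip to fix: if you split $\hat{\Delta}^{i-1}$ as $(\hat{\Delta}^{i-2}\otimes\mathrm{id})\circ\hat{\Delta}$ and apply the iterate to the \emph{left} factor, the term that survives is $j=1$ (left factor a $\top$-product of $i-1$ primitives, right factor $p_1$), not $j=i-1$; the $j=i-1$ term is the one surviving under the other splitting $(\mathrm{id}\otimes\hat{\Delta}^{i-2})\circ\hat{\Delta}$ --- by coassociativity both placements yield $p_i\otimes\cdots\otimes p_1$, so the argument goes through once the two descriptions are not conflated.
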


\begin{proof}
This is shown by Foissy in \cite{Foissy2002} to follow as a consequence of Lemma \ref{lemma::Deconcatenation}.
\end{proof}


\subsection{Characterisation of finite-dimensional comodules}
\label{ssec:comod}

Let $\mathcal{H}=(V,\cdot,\Delta)$ be a Hopf algebra with natural growth operator $\top$. Its graded dual Hopf algebra is denoted by $\mathcal{H}^{\ast}=(V^{\ast},\star, \Delta_{\cdot})$. We first define a family of comodules over $\mathcal{H}$, and show that every finite-dimensional comodule belongs to this family. To define this family, we need the following notation. Let $(i,j) \in \mathbb{N}^2$, with $i \leq j$. Denote $I_{i,j} := \{i,i+1,\ldots, j\}$. A decomposition of $I_{i,j}$ is a partition into connected parts, denoted $I_{i_1,j_1}, \ldots , I_{i_k,j_k}$ with $i=i_1 \leq j_1 \leq i_2 \leq j_2 \leq \cdots \leq i_k \leq j_k =j $. The set of decompositions of $I_{i,j}$ is denoted~$\mathcal{D}_{i,j}$.

\begin{proposition}
Let $n \geq 1$, and $(p_{i,j})_{1 \leq i \leq j \leq n}$ be any family of $\frac{n^2+n}{2}$ primitive elements of $\mathcal{H}$. Let $\mathcal{C}$ be a vector space of dimension $n+1$, with basis $(e_0,\ldots,e_n)$. Define:
\begin{align*}
	\Delta_C(e_0)
	&= 1 \otimes e_0,\\
	\Delta_C(e_i)
	&= \sum_{j=0}^{i-1}\Big[\sum_{I_{i_1,j_1},\ldots,I_{i_k,j_k}\in \mathcal{D}_{j+1,i}} 
	p_{i_k.j_k}\top \cdots \top p_{i_1,j_1}\otimes e_j  \Big]+1\otimes e_i.
\end{align*}
Then $(\mathcal{C},\Delta_C)$ is an $\mathcal{H}$-comodule, which we denote by $\mathcal{C}_{(p_{i,j})}$.
\end{proposition}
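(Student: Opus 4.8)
The plan is to check the two left-comodule axioms for $\Delta_C\colon\mathcal{C}\to\mathcal{H}\otimes\mathcal{C}$ directly: the counit law $(\epsilon\otimes\mathrm{id})\Delta_C=\mathrm{id}_{\mathcal{C}}$ and coassociativity $(\Delta\otimes\mathrm{id})\Delta_C=(\mathrm{id}\otimes\Delta_C)\Delta_C$. To keep the bookkeeping manageable I would first abbreviate the coefficients occurring in the coaction by
$$
P_{a,b}:=\sum_{I_{i_1,j_1},\ldots,I_{i_k,j_k}\in\mathcal{D}_{a,b}}p_{i_k,j_k}\top\cdots\top p_{i_1,j_1},
$$
so that the definition becomes $\Delta_C(e_i)=\sum_{j=0}^{i-1}P_{j+1,i}\otimes e_j+1\otimes e_i$, with $P_{a,a}=p_{a,a}$ coming from the trivial decomposition of $I_{a,a}$.

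The counit law is the routine part. Every summand of $P_{a,b}$ is an iterated $\top$-product of primitive elements, hence equals $F_k(\cdots)$ for some $k\ge1$ in the notation of Lemma \ref{lemma::FiDirect}; since $\top$ is a graded operation and primitive elements have positive degree, $P_{a,b}$ lies in the augmentation ideal, so $\epsilon(P_{a,b})=0$. Consequently $(\epsilon\otimes\mathrm{id})\Delta_C(e_i)=\epsilon(1)e_i=e_i$.

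For coassociativity I would expand both composites and compare, for each $l$, the coefficient of $e_l$. On the left the $e_l$-coefficient of $(\Delta\otimes\mathrm{id})\Delta_C(e_i)$ is $\Delta(P_{l+1,i})$ when $l<i$ (and $1\otimes1$ when $l=i$). On the right, the three ways an $e_l$ can arise in $(\mathrm{id}\otimes\Delta_C)\Delta_C(e_i)$ assemble into
$$
P_{l+1,i}\otimes 1+1\otimes P_{l+1,i}+\sum_{j=l+1}^{i-1}P_{j+1,i}\otimes P_{l+1,j}.
$$
Writing $a=l+1$, $b=i$ and subtracting the two primitive-type terms, the entire proposition therefore reduces to the single coproduct identity
$$
\hat{\Delta}(P_{a,b})=\sum_{j=a}^{b-1}P_{j+1,b}\otimes P_{a,j},\qquad a\le b.
$$

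Establishing this identity is the crux, and the step I expect to be the main obstacle, since it requires matching the algebraic deconcatenation supplied by Lemma \ref{lemma::Deconcatenation} with the combinatorics of interval decompositions. I would apply that lemma to the word attached to each $D\in\mathcal{D}_{a,b}$: it splits the word at every internal block boundary $m$ into the word of the blocks $(I_{i_{m+1},j_{m+1}},\ldots,I_{i_k,j_k})$ tensored with that of $(I_{i_1,j_1},\ldots,I_{i_m,j_m})$. Since consecutive blocks tile $I_{a,b}$, so that $i_{m+1}=j_m+1$, these two pieces are exactly decompositions of $I_{j_m+1,b}$ and of $I_{a,j_m}$. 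Summing over all $D$ and all boundaries and reindexing by the cut value $j=j_m$, one uses that a decomposition of $I_{a,b}$ carrying a marked boundary at $j$ is precisely an independent pair consisting of a decomposition of $I_{a,j}$ and one of $I_{j+1,b}$; this bijection regroups the double sum into $\sum_{j=a}^{b-1}P_{j+1,b}\otimes P_{a,j}$. The degenerate case $a=b$ reads $\hat{\Delta}(p_{a,a})=0$, matching the empty sum, and the two boundary terms $P_{a,b}\otimes1$ and $1\otimes P_{a,b}$ are accounted for precisely by the difference between $\Delta$ and $\hat{\Delta}$. This yields the identity above and hence both comodule axioms.
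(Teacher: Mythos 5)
Your proof is correct and follows essentially the same route as the paper's, which simply defers to Foissy and notes that the argument rests on Lemma \ref{lemma::Deconcatenation}: you reduce both comodule axioms to the identity $\hat{\Delta}(P_{a,b})=\sum_{j=a}^{b-1}P_{j+1,b}\otimes P_{a,j}$ and obtain it from that lemma via the bijection between boundary-marked decompositions of $I_{a,b}$ and pairs of decompositions of $I_{a,j}$ and $I_{j+1,b}$, exactly as in Foissy's original argument. One cosmetic point: Definition \ref{def:naturalgrowth} does not assume $\top$ is graded, so $\epsilon(P_{a,b})=0$ is better justified by an induction on the length of the iterated $\top$-product using Lemma \ref{lemma::Deconcatenation} (or by the fact that $\hat{\Delta}$ takes values in the augmentation ideal tensored with itself), rather than by appealing to gradedness of $\top$.
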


\begin{proof}
Foissy's proof in the context of the Butcher--Connes--Kreimer Hopf algebra \cite{Foissy2002} relies on Lemma \ref{lemma::Deconcatenation}, and hence applies here.
\end{proof}

\begin{theorem} 
\label{thm::EveryComoduleIsPij}
Let $(C,\Delta_C)$ be a finite-dimensional comodule. If the dimension of $C$ is one, then $C$ is trivial, that is to say $\Delta_C(x)=1 \otimes x$ for all $x \in C$. If the dimension of $C$ is $n \geq 2$, then there exists a family $(p_{i,j})_{1 \leq i \leq j \leq n}$ of primitive elements such that $C$ is isomorphic to $C_{(p_{i,j})}$
\end{theorem}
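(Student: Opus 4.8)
The plan is to reduce the statement to finding, inside any finite-dimensional comodule $(C,\Delta_C)$, a basis $e_0,\dots,e_n$ adapted to a complete flag of subcomodules, and then to read off the primitive elements $p_{i,j}$ from the resulting ``upper-triangular'' coaction. First I would introduce the reduced coaction $\tilde{\Delta}_C:=\Delta_C-1\otimes\mathrm{id}$, which takes values in $\mathcal{H}^{+}\otimes C$ where $\mathcal{H}^{+}=\ker\epsilon$, and record that coassociativity of $\Delta_C$ is equivalent to the identity $(\hat{\Delta}\otimes\mathrm{id})\tilde{\Delta}_C=(\mathrm{id}\otimes\tilde{\Delta}_C)\tilde{\Delta}_C$; that is, $\tilde{\Delta}_C$ is coassociative with respect to the reduced coproduct $\hat{\Delta}$. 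Because $C$ is finite-dimensional, its coaction involves only finitely many elements of $\mathcal{H}$, hence elements of bounded degree, and since $\mathcal{H}$ is connected graded the iterated reduced coproduct $\hat{\Delta}^{[m]}$ annihilates everything of degree $\le m$. Consequently the iterates $\tilde{\Delta}_C^{[m]}$ vanish for $m$ large: the reduced coaction is conilpotent. The one-dimensional case then follows at once, since there $\Delta_C(x)=a\otimes x$ with $a$ group-like, and the only group-like element of a connected graded Hopf algebra is $1$, so the comodule is trivial.

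Next I would build the flag from the conilpotency filtration $C_{(r)}:=\ker\tilde{\Delta}_C^{[r+1]}$. A short computation using linear independence of the left tensor legs shows that each $C_{(r)}$ is a subcomodule, that $C=\bigcup_r C_{(r)}$, and that every quotient $C_{(r)}/C_{(r-1)}$ is a trivial comodule. As trivial comodules have every subspace as a subcomodule, I can refine to a complete flag $0\subset C^{1}\subset\cdots\subset C^{n+1}=C$ with one-dimensional trivial quotients, and pick $e_i\in C^{i+1}\setminus C^{i}$. In this basis the coaction becomes
\begin{align*}
\Delta_C(e_i)=\sum_{j=0}^{i}u_{i,j}\otimes e_j, \qquad u_{i,i}=1,
\end{align*}
for suitable $u_{i,j}\in\mathcal{H}$. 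Feeding this into the comodule axiom $(\Delta\otimes\mathrm{id})\Delta_C=(\mathrm{id}\otimes\Delta_C)\Delta_C$ and comparing the coefficients of each $e_l$ yields $\Delta(u_{i,l})=\sum_{l\le j\le i}u_{i,j}\otimes u_{j,l}$, and hence, after removing the primitive part,
\begin{align*}
\hat{\Delta}(u_{i,l})=\sum_{l<j<i}u_{i,j}\otimes u_{j,l}. \tag{$\star$}
\end{align*}

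The heart of the argument is to exhibit primitive elements $p_{a,b}$ (for $1\le a\le b\le n$) with $u_{i,j}=\sum_{\mathcal{D}_{j+1,i}}p_{i_k,j_k}\top\cdots\top p_{i_1,j_1}$. I would define them by inverting this relation: since the single-block decomposition of $\{j+1,\dots,i\}$ contributes exactly $p_{j+1,i}$, set
\begin{align*}
p_{a,b}:=u_{b,a-1}-\sum_{\substack{\text{decompositions of }\{a,\dots,b\}\\ \text{into }\ge 2\text{ blocks}}}p_{i_k,j_k}\top\cdots\top p_{i_1,j_1},
\end{align*}
a valid recursion since the subtracted terms involve only strictly shorter intervals. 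By construction the $u_{i,j}$ then take the prescribed decomposition form, so it remains to prove $\hat{\Delta}(p_{a,b})=0$ by induction on the interval length $b-a$. The base case $p_{a,a}=u_{a,a-1}$ is primitive by $(\star)$ (empty sum). For the inductive step I would compute $\hat{\Delta}(u_{b,a-1})$ in two ways: directly from $(\star)$ it equals $\sum_{a\le m\le b-1}u_{b,m}\otimes u_{m,a-1}$, while writing $u_{b,a-1}=p_{a,b}+(\ge 2\text{-block terms})$ and applying Lemma \ref{lemma::Deconcatenation} to each multi-block product (legitimate, as all shorter $p$'s are primitive) gives $\hat{\Delta}(p_{a,b})$ plus a sum over decompositions and split points. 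The key combinatorial observation is that a $\ge 2$-block decomposition of $\{a,\dots,b\}$ together with a choice of split point is the same datum as a cut point $m$ together with a decomposition of $\{a,\dots,m\}$ and a decomposition of $\{m+1,\dots,b\}$; under this bijection the multi-block sum reproduces exactly $\sum_{a\le m\le b-1}u_{b,m}\otimes u_{m,a-1}$. Comparing the two computations forces $\hat{\Delta}(p_{a,b})=0$, completing the induction.

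With all $p_{a,b}$ shown primitive, the chosen basis realises $\Delta_C$ in precisely the form defining $C_{(p_{i,j})}$, so the identity on basis vectors is the desired comodule isomorphism. I expect the main obstacle to be the primitivity induction of the last paragraph: it is exactly there that conilpotency (needed to pin down the ``longest'' generator $p_{a,b}$ by cancellation), the coassociativity constraint $(\star)$, and the deconcatenation formula of Lemma \ref{lemma::Deconcatenation} must be made to agree through the decomposition/split bijection. Establishing the flag in the second step is comparatively routine once conilpotency of $\tilde{\Delta}_C$ is in hand.
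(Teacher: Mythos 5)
Your proof is correct and follows essentially the same route as the construction the paper defers to (Foissy's): an adapted basis for the conilpotency/socle flag, the resulting triangular coaction with structure relation $(\star)$, a recursive definition of the $p_{i,j}$ by inverting the interval-decomposition formula, and primitivity established by induction via Lemma \ref{lemma::Deconcatenation} and the split-point/decomposition bijection. The only discrepancy is an off-by-one inherited from the paper itself: the theorem speaks of dimension $n$ while $\mathcal{C}_{(p_{i,j})}$ with $(p_{i,j})_{1\leq i\leq j\leq n}$ has dimension $n+1$; your indexing with basis $e_0,\ldots,e_n$ is the one consistent with the defining proposition.
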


\begin{proof}
The proof follows by construction. See \cite{Foissy2002} for details.
\end{proof}

The family $(p_{i,j})_{1 \leq i \leq j \leq n}$ can be written as a strictly upper-triangular matrix $\mathcal{P}$:
\begin{align*}
\mathcal{P}=\begin{bmatrix}
0 	& p_{1,1} 	& \cdots & p_{1,n}\\
\vdots 	& \ddots 	& \ddots & \vdots \\
0 	& \cdots 	& \ddots & p_{n,n} \\
0 	& \cdots 	& \cdots & 0
\end{bmatrix}.
\end{align*}
We say that $(p_{i,j})$ is reduced of type $(c_0,\ldots,c_k)$ if the matrix $\mathcal{P}$ can be written in block form:
\begin{align*}
\mathcal{P}=\left[ \begin{array}{c|c|c|c}
0  	& \mathcal{P}_{1,1} 	& \cdots 	& \mathcal{P}_{1,n}\\ \hline
\vdots 	& \ddots  			& \ddots  	& \vdots \\ \hline
0 	& \cdots 			& \ddots	& \mathcal{P}_{n,n} \\\hline
0 	& \cdots 			& \cdots	& 0
\end{array} \right],
\end{align*}
where the diagonal zero blocks are of size $c_0 \times c_0, \ldots,c_k \times c_k$.

\begin{example}[\cite{Foissy2002}]
The matrix
\begin{align*}
\mathcal{P} = \left[ \begin{array}{c|cc|cc}
0 & a & b & x & y \\ \hline
0 & 0 & 0 & c & e \\
0 & 0 & 0 & d & f \\ \hline
0 & 0 & 0 & 0 & 0 \\
0 & 0 & 0 & 0 & 0 
\end{array}  \right]
\end{align*}
is reduced of type $(1,2,2)$.
\end{example}

For $\mathcal{C}$ an $\mathcal{H}$-comodule, define $\mathcal{C}_0=\{x \in \mathcal{C}\ |\ \Delta_C(x)=1 \otimes x  \}$, then inductively define $\mathcal{C}_{i+1} \subset \mathcal{C}$ by:
\begin{align*}
\mathcal{C}_i 
&\subset \mathcal{C}_{i+1},\\
\frac{\mathcal{C}_{i+1}}{\mathcal{C}_i}
&=\Bigl( \frac{\mathcal{C}}{\mathcal{C}_i} \Bigr)_0.
\end{align*}
If $\mathcal{C}$ is finite-dimensional, then Theorem \ref{thm::EveryComoduleIsPij} implies that $\mathcal{C}_0$ is non-zero. Hence this construction gives a flag of comodules: $\mathcal{C}_0 \subsetneq \cdots \subsetneq \mathcal{C}_k = \mathcal{C}$.

\begin{proposition} [\cite{Foissy2002}]
Let $(p_{i,j})$ be reduced of type $(c_0,\ldots,c_k)$ and let $(e_0,\ldots,e_n)$ be the basis of $\mathcal{C}_{(p_{i,j})}$. Then for all $\ell \in \{1,\ldots, k\}$, $(e_0,\ldots,e_{c_0+\cdots+c_{\ell-1}})$ is a basis of $(\mathcal{C}_{(p_{i,j})})_{\ell}$.
\end{proposition}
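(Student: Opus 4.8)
The plan is to translate the ``reduced of type $(c_0,\ldots,c_k)$'' hypothesis into a block partition of the basis and then track the canonical flag block by block. Set $b_0 := 0$ and $b_m := c_0 + \cdots + c_{m-1}$ for $1 \le m \le k+1$, so that $\{0,1,\ldots,n\}$ is partitioned into the consecutive blocks $B_m := \{b_m,\ldots,b_{m+1}-1\}$ of size $c_m$, with $b_{k+1} = n+1$. Reading off the matrix $\mathcal{P}$, the reduced hypothesis says precisely that $p_{a,b} = 0$ whenever $e_{a-1}$ and $e_b$ lie in a common block. Writing $M_{ij} := \sum_{\mathcal{D}_{j+1,i}} p_{i_k,j_k}\top\cdots\top p_{i_1,j_1}$ for the coefficient of $e_j$ in $\Delta_C(e_i)$ (so that its single-part term is the primitive $p_{j+1,i}$), the first step is the vanishing lemma: \emph{$M_{ij}=0$ whenever $i$ and $j$ lie in the same block.} Indeed, if $i,j\in B_m$ then $\{j,\ldots,i\}\subseteq B_m$, so for every part $I_{i_p,j_p}$ of any decomposition of $I_{j+1,i}$ one has $i_p-1\in\{j,\ldots,i-1\}\subseteq B_m$ and $j_p\in\{j+1,\ldots,i\}\subseteq B_m$; hence the factor $p_{i_p,j_p}$ vanishes and with it the whole $\top$-monomial.

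I would then prove, by induction on $\ell$, that $\mathcal{C}_\ell = \mathrm{span}(e_i : i\in B_0\cup\cdots\cup B_\ell)$, i.e. that the flag absorbs exactly one block at each step. The easy inclusion ``$\supseteq$'' is handled uniformly by passing to the quotient comodule $\mathcal{C}/\mathcal{C}_\ell$: its induced coaction is obtained by discarding every term $M_{ij}\otimes e_j$ whose second-leg index $j$ already lies in $B_0\cup\cdots\cup B_\ell$. For $i\in B_{\ell+1}$ the surviving indices $j<i$ all still lie in $B_{\ell+1}$, so the vanishing lemma gives $\bar\Delta(\bar e_i)=1\otimes\bar e_i$; thus $\{\bar e_i : i\in B_{\ell+1}\}\subseteq(\mathcal{C}/\mathcal{C}_\ell)_0$, which lifts to $\mathrm{span}(e_i : i\in B_0\cup\cdots\cup B_{\ell+1})\subseteq\mathcal{C}_{\ell+1}$. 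The case $\ell=0$ is the same computation applied to $\mathcal{C}$ itself.

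The main obstacle is the reverse inclusion (exactness): one must rule out that a basis vector from a later block enters the flag prematurely. Here I would exploit the directness of the images $\mathrm{Im}(F_i)$ from Lemma \ref{lemma::FiDirect}. Given $\bar v = \sum_{i\ge b_{\ell+1}}\mu_i\bar e_i\in(\mathcal{C}/\mathcal{C}_\ell)_0$, the coefficient of $\bar e_{b_{\ell+2}-1}$ (the last index of $B_{\ell+1}$) must vanish in $\mathcal{H}$; by the vanishing lemma only the indices $i\ge b_{\ell+2}$ contribute, and each $M_{i,b_{\ell+2}-1}$ splits as the primitive $p_{b_{\ell+2},i}\in\mathrm{Im}(F_1)$ plus higher $\top$-monomials lying in $\bigcup_{m\ge2}\mathrm{Im}(F_m)$. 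Directness then forces the primitive component $\sum_{i\ge b_{\ell+2}}\mu_i\,p_{b_{\ell+2},i}$ to vanish on its own. The delicate point is the linear independence of the boundary family $\{p_{b_{\ell+2},i}\}_{i\ge b_{\ell+2}}$: it is this non-degeneracy --- inherent in the reduced form rather than in the mere vanishing of the diagonal blocks --- that yields $\mu_i=0$ for all $i\ge b_{\ell+2}$ and hence $\bar v\in\mathrm{span}(\bar e_i : i\in B_{\ell+1})$. This step, together with an induction propagating the independence of the off-diagonal data block by block (cf. \cite{Foissy2002}), is where the real work lies.

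Combining the two inclusions gives $\mathcal{C}_\ell = \mathrm{span}(e_i : i\in B_0\cup\cdots\cup B_\ell)$ for every $\ell$, so that the canonical flag collects one block at a time, $\dim\mathcal{C}_\ell = c_0+\cdots+c_\ell$, and $\mathcal{C}_k=\mathcal{C}$, consistently with $\mathcal{C}_0\subsetneq\cdots\subsetneq\mathcal{C}_k=\mathcal{C}$. Reading off the basis vectors indexed by the first $\ell+1$ blocks then yields the asserted basis of $(\mathcal{C}_{(p_{i,j})})_\ell$.
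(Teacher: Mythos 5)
Your block decomposition, the vanishing lemma ($M_{ij}=0$ when $i$ and $j$ lie in a common block), and the resulting inclusion $\mathrm{span}(e_0,\ldots,e_{b_{\ell+1}-1})\subseteq (\mathcal{C}_{(p_{i,j})})_{\ell}$ are all correct; that is the easy half of the statement. The genuine gap is in the reverse inclusion, and it is twofold. First, the non-degeneracy you appeal to is not available from the definition of ``reduced'' as given in this paper, which only requires the diagonal blocks of $\mathcal{P}$ to vanish. Under that definition alone the proposition is false: the identically zero family is reduced of every type $(c_0,\ldots,c_k)$ with $c_0+\cdots+c_k=n+1$, yet its flag collapses to $\mathcal{C}_0=\mathcal{C}$. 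The statement therefore relies on an additional injectivity condition on the off-diagonal data (present in Foissy's original formulation, and automatic for the families produced from a flag-adapted basis in the neighbouring propositions); a proof must state this condition precisely and use it, whereas you neither formulate nor verify it and explicitly defer it as ``where the real work lies''.

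Second, even granting a suitable non-degeneracy hypothesis, your reduction to the single equation at $j=b_{\ell+2}-1$ and to linear independence of the one-row family $\{p_{b_{\ell+2},i}\}_{i\ge b_{\ell+2}}$ is the wrong condition: it is neither implied by reducedness nor necessary for the flag to have the claimed shape. For instance, with $n=2$, $p_{1,1}=p_{1,2}=p\neq 0$ and $p_{2,2}=q\neq 0$, the flag is the full flag of type $(1,1,1)$ although the row $\{p_{1,1},p_{1,2}\}=\{p,p\}$ is linearly dependent; the vanishing of $\mu_2$ there comes from the equation at $j=1$, not from the one at $j=0$. The correct argument must exploit the whole system $\sum_{i>j}\mu_i M_{ij}=0$ over all admissible $j$, project each equation onto $\mathrm{Im}(F_1)$ via Lemma \ref{lemma::FiDirect}, and invoke the joint independence of the column blocks of $\mathcal{P}$ sitting above each diagonal block. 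Finally, note that your conclusion yields $(e_0,\ldots,e_{c_0+\cdots+c_{\ell}-1})$ as the basis of $(\mathcal{C}_{(p_{i,j})})_{\ell}$, which does not match the index $c_0+\cdots+c_{\ell-1}$ appearing in the statement; this is most plausibly an off-by-one slip in the statement itself, but it should be flagged rather than passed over.
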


\begin{proposition} [\cite{Foissy2002}]
Let $\mathcal{C}$ be a finite-dimensional comodule with basis $(e_0,\ldots,e_n)$ such that $(e_0,\ldots,e_{\dim(\mathcal{C}_i)-1})$ is a basis for $\mathcal{C}_i$, for $0 \leq i \leq k$. Let $(p_{i,j})$ be the family of primitive elements constructed in the proof of Theorem \ref{thm::EveryComoduleIsPij}. Then $(p_{i,j})$ is reduced of type $(c_0,\ldots,c_k)$, where $c_0=\dim(\mathcal{C}_0), c_i=\dim(\mathcal{C}_i)-\dim(\mathcal{C}_{i-1})$.
\end{proposition}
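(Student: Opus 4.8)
The plan is to work directly with the coaction in the given flag-adapted basis and to prove, by induction on the index gap, that every entry of $\mathcal{P}$ lying in a diagonal block vanishes. Write $d_{\ell}=\dim(\mathcal{C}_{\ell})$ with the convention $d_{-1}=0$, so the hypothesis reads $\mathcal{C}_{\ell}=\mathrm{span}(e_0,\dots,e_{d_{\ell}-1})$ and the $\ell$-th diagonal block of $\mathcal{P}$ occupies the index range $\{d_{\ell-1},\dots,d_{\ell}-1\}$. Recalling the construction behind Theorem~\ref{thm::EveryComoduleIsPij}, I write
\[
\Delta_C(e_s)=1\otimes e_s+\sum_{j<s}a_{s,j}\otimes e_j,\qquad
a_{s,j}=\sum_{I_{i_1,j_1},\dots,I_{i_k,j_k}\in\mathcal{D}_{j+1,s}}p_{i_k,j_k}\top\cdots\top p_{i_1,j_1},
\]
so that the entry of $\mathcal{P}$ in row $r$, column $s$ is $p_{r+1,s}$, namely the single-block ($k=1$) summand of $a_{s,r}$; every other summand is a $\top$-product of factors $p_{i_m,j_m}$ attached to proper sub-intervals of $I_{r+1,s}$, which are determined recursively from pairs of strictly smaller gap.

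The first step is to extract vanishing information about the coefficients $a_{s,j}$ from the flag. Fix $s$ in the $\ell$-th block, so $d_{\ell-1}\le s\le d_{\ell}-1$ and the class $\overline{e_s}$ lies in $\mathcal{C}_{\ell}/\mathcal{C}_{\ell-1}=(\mathcal{C}/\mathcal{C}_{\ell-1})_0$. Since $\mathcal{C}_{\ell-1}$ is a subcomodule, the quotient coaction is $(\mathrm{id}\otimes\pi_{\ell-1})\Delta_C$ with $\pi_{\ell-1}\colon\mathcal{C}\to\mathcal{C}/\mathcal{C}_{\ell-1}$ the projection, and $\overline{e_s}\in(\mathcal{C}/\mathcal{C}_{\ell-1})_0$ unwinds to $(\mathrm{id}\otimes\pi_{\ell-1})\bigl(\Delta_C(e_s)-1\otimes e_s\bigr)=0$. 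Equivalently $\Delta_C(e_s)-1\otimes e_s\in\mathcal{H}\otimes\mathcal{C}_{\ell-1}$, which forces $a_{s,j}=0$ for every $j\ge d_{\ell-1}$. In particular $a_{s,r}=0$ whenever $r$ and $s$ lie in the same block with $r<s$.

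It remains to pass from $a_{s,r}=0$ to $p_{r+1,s}=0$, and this is where the only real work lies: $a_{s,r}$ is a sum of several $\top$-products, not just $p_{r+1,s}$, so its vanishing does not immediately give vanishing of the single term. I would induct on the gap $g=s-r$ among pairs contained in a common block. For $g=1$ we have $a_{s,s-1}=p_{s,s}$, since $I_{s,s}=\{s\}$ admits only its one-element decomposition, so $a_{s,s-1}=0$ gives $p_{s,s}=0$. For $g>1$, consider any multi-block decomposition of $I_{r+1,s}$; each factor $p_{i_m,j_m}$ corresponds to the pair $(i_m-1,j_m)$, which still lies inside the same block (because $i_m\ge r+1$ gives $i_m-1\ge r$, and $j_m\le s$) but has gap $j_m-i_m+1<s-r$, so it vanishes by the induction hypothesis. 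Hence every multi-block summand of $a_{s,r}$ is a $\top$-product containing a zero factor and is therefore $0$, leaving $a_{s,r}=p_{r+1,s}$; together with $a_{s,r}=0$ this yields $p_{r+1,s}=0$.

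Running this over all $\ell$ shows that every diagonal block of $\mathcal{P}$ vanishes, i.e.\ $(p_{i,j})$ is reduced of type $(c_0,\dots,c_k)$ with $c_0=\dim(\mathcal{C}_0)$ and $c_i=\dim(\mathcal{C}_i)-\dim(\mathcal{C}_{i-1})$. I expect the main obstacle to be precisely the inductive disentangling of the higher decomposition terms in $a_{s,r}$; the two facts that make it go through are that the diagonal-block index ranges coincide exactly with the successive quotients of the flag, so that sub-factors never escape their block, and that these sub-factors always have strictly smaller gap.
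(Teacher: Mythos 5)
Your argument is correct. The paper itself does not reproduce a proof of this proposition (it only cites Foissy), so there is nothing to compare line by line; your write-up supplies the standard argument one would expect behind the citation. The two essential points are both present and handled properly: first, the flag-adapted basis together with $\mathcal{C}_{\ell}/\mathcal{C}_{\ell-1}=(\mathcal{C}/\mathcal{C}_{\ell-1})_0$ forces the full coaction coefficients $a_{s,r}$ to vanish whenever $r$ and $s$ lie in the same block; second, since $a_{s,r}$ is a sum over all decompositions of $I_{r+1,s}$ and not just the single term $p_{r+1,s}$, one must disentangle the multi-block summands, which your induction on the gap does correctly because every factor of a multi-block summand corresponds to a same-block pair of strictly smaller gap and hence vanishes by hypothesis, leaving $a_{s,r}=p_{r+1,s}=0$.
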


\begin{corollary} [\cite{Foissy2002}]
For any finite-dimensional comodule $\mathcal{C}$, there exists a reduced family $(p_{i,j})$ such that $\mathcal{C}$ is isomorphic to $\mathcal{C}_{(p_{i,j})}$. \\
If $(p_{i,j}),(p'_{i,j})$ are reduced families such that $\mathcal{C}_{(p_{i,j})}$ and $\mathcal{C}_{(p'_{i,j})}$ are isomorphic, then $(p_{i,j})$ and $(p'_{i,j})$ have the same type.
\end{corollary}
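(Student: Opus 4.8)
The plan is to treat the two assertions separately, in both cases reducing to the two Propositions immediately preceding the statement together with Theorem \ref{thm::EveryComoduleIsPij}. The observation underlying everything is that the flag $\mathcal{C}_0 \subsetneq \cdots \subsetneq \mathcal{C}_k = \mathcal{C}$ is an intrinsic invariant of the comodule: the space $\mathcal{C}_0 = \{x \in \mathcal{C}\ |\ \Delta_C(x) = 1 \otimes x\}$ of coinvariants is defined without reference to any basis, and each $\mathcal{C}_{i+1}$ is extracted from $\mathcal{C}_i$ purely through the quotient comodule structure via $\mathcal{C}_{i+1}/\mathcal{C}_i = (\mathcal{C}/\mathcal{C}_i)_0$. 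Since $\mathcal{C}$ is finite-dimensional, this flag terminates at $\mathcal{C}_k = \mathcal{C}$.

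For existence, I would start from an arbitrary finite-dimensional comodule $\mathcal{C}$ and form this canonical flag. Choosing a basis $(e_0,\ldots,e_n)$ adapted to the flag, meaning that $(e_0,\ldots,e_{\dim(\mathcal{C}_i)-1})$ is a basis of $\mathcal{C}_i$ for each $i$, Theorem \ref{thm::EveryComoduleIsPij} produces a family $(p_{i,j})$ with $\mathcal{C} \cong \mathcal{C}_{(p_{i,j})}$. The second Proposition above then applies verbatim to this adapted basis and to the family of primitive elements constructed in the proof of Theorem \ref{thm::EveryComoduleIsPij}, yielding that $(p_{i,j})$ is reduced, of type $(c_0,\ldots,c_k)$ with $c_0 = \dim(\mathcal{C}_0)$ and $c_i = \dim(\mathcal{C}_i) - \dim(\mathcal{C}_{i-1})$. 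This settles the first claim.

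For the uniqueness of type, I would first verify that any comodule isomorphism $\phi\colon \mathcal{C}_{(p_{i,j})} \to \mathcal{C}_{(p'_{i,j})}$ respects the canonical flag. Because $\phi$ intertwines the coactions, $(\mathrm{id} \otimes \phi)\Delta_C = \Delta_{C'}\phi$, an element $x$ with $\Delta_C(x) = 1 \otimes x$ satisfies $\Delta_{C'}(\phi(x)) = 1 \otimes \phi(x)$, so $\phi(\mathcal{C}_0) = \mathcal{C}'_0$; applying the same argument on successive quotients gives $\phi(\mathcal{C}_i) = \mathcal{C}'_i$ for all $i$ by induction, whence $\dim(\mathcal{C}_i) = \dim(\mathcal{C}'_i)$ throughout. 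On the other hand, the first Proposition above tells me that for a reduced family of type $(c_0,\ldots,c_k)$ the dimensions of the flag are precisely the partial sums $c_0,\, c_0+c_1,\, \ldots,\, c_0+\cdots+c_k$. Reading off the type from the sequence of flag dimensions is therefore a bijective correspondence, so equality of flag dimensions forces $(c_0,\ldots,c_k) = (c'_0,\ldots,c'_k)$, which is exactly the assertion that the two reduced families have the same type.

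The only genuinely substantive step is the functoriality of the coinvariant construction, namely that comodule maps preserve the flag. This follows immediately from the intertwining relation, but it is the hinge on which the uniqueness argument turns: without it the flag dimensions would not be isomorphism invariants, and the type could not be recovered from $\mathcal{C}$ alone.
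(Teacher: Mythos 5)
Your proposal is correct and follows essentially the same route the paper intends: existence by running Theorem \ref{thm::EveryComoduleIsPij} on a basis adapted to the canonical flag and invoking the second preceding proposition, and uniqueness of type by noting that comodule isomorphisms preserve the intrinsic flag $\mathcal{C}_0 \subsetneq \cdots \subsetneq \mathcal{C}_k$, whose dimensions recover the type via the first preceding proposition. Nothing is missing.
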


For a comodule $\mathcal{C}$ we will call the type of any family $(p_{i,j})$, such that $\mathcal{C}$ is isomorphic to $\mathcal{C}_{(p_{i,j})}$ the type of $\mathcal{C}$. Let 
\begin{align*}
G_{(c_0,\ldots,c_k)}
&= \biggl\{ \left[ \begin{array}{c|c|c|c}
g_{0,0} 	& g_{0,1} 	& \cdots 	& g_{0,k} \\ \hline
\vdots 	& \ddots 	& \ddots 	& \vdots \\ \hline
0 		& \cdots 	& \ddots 	& g_{k-1,k} \\ \hline
0 		& \cdots 	& \cdots 	& g_{k,k}
\end{array} \right] \ |\ g_{i,i} \in GL_{c_i}(\mathbb{Q}) \biggr\}
\end{align*}
be a subgroup of $GL_{c_0+\cdots+c_k}(\mathbb{Q})$. It acts on reduced families of type $(c_0,\ldots,c_k)$ by $g.\mathcal{P}:=g\mathcal{P}g^{-1}$, where $\mathcal{P}$ is the matrix of the family.

\begin{proposition} [\cite{Foissy2002}]
Let $(p_{i,j})$ and $(p'_{i,j})$ be two reduced families of primitive elements, and $(c_0,\ldots,c_k)$ the type of $(p_{i,j})$. Then $C_{(p_{i,j})}$ is isomorphic to $C_{p'_{i,j}}$ if and only if $(p'_{i,j})$ is of type $(c_0,\ldots,c_k)$ and there exists a $g \in G_{(c_0,\ldots,c_k)}$ such that $g.\mathcal{P}=\mathcal{P}'$.
\end{proposition}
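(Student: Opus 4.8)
The plan is to encode the coaction of $\mathcal{C}_{(p_{i,j})}$ as a single upper-triangular transition matrix $T=T(\mathcal{P})$ with entries in $\mathcal{H}$, indexed by $\{0,\dots,n\}$, where $T_{i,i}=1$ and, for $j<i$, $T_{j,i}=\sum_{\mathcal{D}_{j+1,i}}p_{i_k,j_k}\top\cdots\top p_{i_1,j_1}$, so that $\Delta_C(e_i)=\sum_j T_{j,i}\otimes e_j$. Writing $\mathcal{P}$ for the strictly-upper-triangular matrix of primitives, $T$ is a geometric series $T=I+\mathcal{P}+\mathcal{P}^{\top2}+\cdots$ for the suitable $\top$-matrix product that inherits the left-nesting of $p_i\top\cdots\top p_1$; by construction its $\top$-length-$k$ part $\mathcal{P}^{\top k}$ lands in $\text{Im}(F_k)$. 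Two facts drive the proof. First, comparing $(\mathrm{id}\otimes\phi)\Delta_C$ with $\Delta_{C'}\phi$ shows that a linear map $\phi$ with scalar matrix $g$ in the bases $(e_i),(e'_i)$ is a comodule morphism $\mathcal{C}_{(p)}\to\mathcal{C}_{(p')}$ if and only if $gT(\mathcal{P})=T(\mathcal{P}')g$. Second, because $\top$ is bilinear and $g$ has scalar entries, the adjacent $g^{-1}g$ factors telescope inside each $\top$-product, giving the equivariance $T(g\mathcal{P}g^{-1})=g\,T(\mathcal{P})\,g^{-1}$; the fixed nesting is harmless, as scalars pass freely through either argument of $\top$.

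The $(\Leftarrow)$ direction is then immediate. Given $g\in G_{(c_0,\dots,c_k)}$ with $g.\mathcal{P}=g\mathcal{P}g^{-1}=\mathcal{P}'$, equivariance yields $T(\mathcal{P}')=gT(\mathcal{P})g^{-1}$, i.e.\ $gT(\mathcal{P})=T(\mathcal{P}')g$, so by the first fact $\phi=g$ is a comodule isomorphism. Note that membership $g\in G_{(c_0,\dots,c_k)}$ already forces $\mathcal{P}'$ to be reduced of type $(c_0,\dots,c_k)$, so the type hypothesis on $(p')$ is part of the same package.

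For $(\Rightarrow)$, suppose $\phi:\mathcal{C}_{(p)}\to\mathcal{C}_{(p')}$ is a comodule isomorphism. The flag $\mathcal{C}_0\subsetneq\cdots\subsetneq\mathcal{C}_k$ is defined intrinsically from the coaction (via $\mathcal{C}_0=\{x\mid\Delta_C(x)=1\otimes x\}$ and its iterated quotient construction), so $\phi$ carries $\mathcal{C}_\ell$ onto $\mathcal{C}'_\ell$; comparing dimensions shows $(p')$ has the same type $(c_0,\dots,c_k)$, which is a well-defined invariant by Theorem~\ref{thm::EveryComoduleIsPij} and the preceding propositions on reduced families. Since those propositions make the chosen bases flag-adapted (i.e.\ $(e_0,\dots,e_{\dim\mathcal{C}_\ell-1})$ spans $\mathcal{C}_\ell$), preservation of the flag forces the matrix $g$ of $\phi$ to be block-upper-triangular with invertible diagonal blocks, that is $g\in G_{(c_0,\dots,c_k)}$. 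Finally the first fact gives $gT(\mathcal{P})=T(\mathcal{P}')g$, and to extract $g\mathcal{P}=\mathcal{P}'g$ I project each entry onto its $\top$-length-$1$ part; this projection is legitimate precisely because the sum $\mathbb{K}1+\sum_i\text{Im}(F_i)$ is direct (Lemma~\ref{lemma::FiDirect}), so the $\text{Im}(F_1)=P(\mathcal{H})$-component of $T_{j,i}$ is exactly $\mathcal{P}_{j,i}$, and since $g,g'$ are scalar the projection commutes with the matrix product. This yields $g\mathcal{P}=\mathcal{P}'g$, i.e.\ $g.\mathcal{P}=\mathcal{P}'$.

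The main obstacle is this last step of $(\Rightarrow)$: the morphism condition delivers only the full relation $gT=T'g$ among the composite elements $T_{j,i}$, and one must isolate the genuinely linear relation $g\mathcal{P}=\mathcal{P}'g$ from it. The directness of the length filtration in Lemma~\ref{lemma::FiDirect} is exactly what guarantees that the primitive part of $T_{j,i}$ is unambiguously $\mathcal{P}_{j,i}$, uncontaminated by the higher $\top$-products, and the verification that conjugation by the scalar matrix $g$ interacts correctly with the non-associative $\top$-product is the other point requiring care.
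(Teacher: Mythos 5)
Your argument is correct. Note that the paper itself gives no proof of this proposition---it is stated with a citation to Foissy's original article---so there is nothing in the text to compare against; what you have written is a complete, self-contained argument along what is essentially the standard (and presumably Foissy's) line. The organization via the transition matrix $T(\mathcal{P})=I+\mathcal{P}+\mathcal{P}^{\top 2}+\cdots$ is a clean way to package it: Fact 1 (morphism $\Leftrightarrow$ $gT=T'g$) is a direct computation, and Fact 2 (equivariance $T(g\mathcal{P}g^{-1})=gT(\mathcal{P})g^{-1}$) does go through despite the non-associativity of $\top$, since the telescoping $\sum_m (g^{-1})_{s,m}g_{m,r}=\delta_{s,r}$ at each intermediate index of a chain only uses bilinearity in each slot of the fixed left-nested expression. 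You also correctly identify the two points that need justification in the forward direction: flag-adaptedness of the bases (supplied by the two preceding propositions) to force $g\in G_{(c_0,\ldots,c_k)}$, and the directness of $\mathbb{K}1+\sum_i\mathrm{Im}(F_i)$ from Lemma \ref{lemma::FiDirect} to isolate the $\mathrm{Im}(F_1)$-component $\mathcal{P}_{j,i}$ of $T_{j,i}$ and recover the linear relation $g\mathcal{P}=\mathcal{P}'g$. The only caveat, inherited from the paper's definitions rather than introduced by you, is that well-definedness of the ``type'' of a reduced family is taken for granted; your reading of it off the intrinsic flag $\mathcal{C}_0\subsetneq\cdots\subsetneq\mathcal{C}_k$ is the right way to make that invariant precise.
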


\begin{theorem} [\cite{Foissy2002}]
Let $\mathcal{P}_{(c_0,\ldots,c_k)}$ be the set of reduced families of type $(c_0,\ldots,c_k)$ and $\mathcal{O}_{(c_0,\ldots,c_k)}$ the orbit space under the action of $G_{(c_0,\ldots,c_k)}$. Then there is a bijection from $\mathcal{O}_{(c_0,\ldots,c_k)}$ to the set of isomorphism classes of $\mathcal{H}$-comodules of type $(c_0,\ldots,c_k)$. There is a bijection from the disjoint union of all $\mathcal{O}_{(c_0,\ldots,c_k)}$ into the set of finite-dimensional comodules.
\end{theorem}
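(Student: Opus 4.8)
The plan is to assemble the two results immediately preceding this statement: the Corollary, which guarantees that every finite-dimensional comodule is isomorphic to $\mathcal{C}_{(p_{i,j})}$ for some \emph{reduced} family and that the type of such a family is an isomorphism invariant, and the Proposition, which characterises exactly when two reduced families give isomorphic comodules, namely iff they lie in the same $G_{(c_0,\ldots,c_k)}$-orbit. No fresh computation on comodules is needed; the task is to repackage these into the two asserted bijections and to verify well-definedness, injectivity, and surjectivity.

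First I would fix a type $(c_0,\ldots,c_k)$ and define the candidate map
$$\Phi: \mathcal{O}_{(c_0,\ldots,c_k)} \longrightarrow \{\,\text{isomorphism classes of }\mathcal{H}\text{-comodules of type }(c_0,\ldots,c_k)\,\},\qquad [\mathcal{P}] \longmapsto [\mathcal{C}_{(p_{i,j})}],$$
where $(p_{i,j})$ is the reduced family whose matrix is $\mathcal{P}$. Well-definedness of $\Phi$ uses the ``if'' direction of the Proposition: if $\mathcal{P}' = g.\mathcal{P}$ for some $g \in G_{(c_0,\ldots,c_k)}$, then $\mathcal{C}_{(p_{i,j})} \cong \mathcal{C}_{(p'_{i,j})}$, so the assigned isomorphism class depends only on the orbit $[\mathcal{P}]$. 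One also checks that $\Phi$ lands in the claimed target, i.e.\ that $\mathcal{C}_{(p_{i,j})}$ really has type $(c_0,\ldots,c_k)$; this is exactly the content of the Proposition describing the flag $\mathcal{C}_0 \subsetneq \cdots \subsetneq \mathcal{C}_k$ for a reduced family.

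Next I would prove that $\Phi$ is a bijection. For injectivity, suppose $\Phi([\mathcal{P}]) = \Phi([\mathcal{P}'])$, so that $\mathcal{C}_{(p_{i,j})} \cong \mathcal{C}_{(p'_{i,j})}$ with both families reduced of type $(c_0,\ldots,c_k)$; the ``only if'' direction of the Proposition then furnishes $g \in G_{(c_0,\ldots,c_k)}$ with $g.\mathcal{P} = \mathcal{P}'$, whence $[\mathcal{P}] = [\mathcal{P}']$. For surjectivity, take any comodule $\mathcal{C}$ of type $(c_0,\ldots,c_k)$; by the Corollary there is a reduced family $(p_{i,j})$ with $\mathcal{C} \cong \mathcal{C}_{(p_{i,j})}$, and since type is an isomorphism invariant (second half of the Corollary) this family has type $(c_0,\ldots,c_k)$, giving $\Phi([\mathcal{P}]) = [\mathcal{C}]$. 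This settles the first bijection.

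Finally, for the second bijection I would observe that assigning to each finite-dimensional comodule its type -- unambiguous by the Corollary -- partitions the set of isomorphism classes of finite-dimensional comodules into the strata of fixed type $(c_0,\ldots,c_k)$, with the trivial one-dimensional comodules absorbed into the stratum of type $(1)$ (the case $k=0$). Since the orbit spaces $\mathcal{O}_{(c_0,\ldots,c_k)}$ are pairwise disjoint by construction and each bijects, via $\Phi$, with the corresponding stratum, their disjoint union bijects with the union of all strata, which is precisely the set of all finite-dimensional comodules up to isomorphism. I expect the only delicate point to be the bookkeeping: making sure that ``type of a comodule'' is genuinely well defined (this is the content I am borrowing from the second half of the Corollary) and that the one-dimensional case is correctly folded into the type $(1)$ stratum rather than treated as a separate exception.
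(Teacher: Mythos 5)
Your proposal is correct and is exactly the intended argument: the paper states this theorem without proof (citing Foissy), and the result is precisely the assembly of the preceding Corollary (existence of a reduced family for every finite-dimensional comodule, plus type being an isomorphism invariant) with the preceding Proposition (two reduced families give isomorphic comodules iff they are in the same $G_{(c_0,\ldots,c_k)}$-orbit), which is what you carry out. Your attention to well-definedness, to the target of $\Phi$ being the correct stratum, and to folding the one-dimensional trivial comodule into the type $(1)$ stratum is exactly the right bookkeeping and introduces no gaps.
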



\subsection{A second grading on $\mathcal{H}$}
\label{ssec:2ndgrading}

The Hopf algebra $\mathcal{H}=(V,\cdot,\Delta)$ with natural growth operator $\top$ has the infinite-dimensional comodule $\mathcal{C}=(V,\Delta)$ obtained from the Hopf algebra structure. Although infinite-dimensional, we can write it as a union of finite-dimensional comodules $\mathcal{C}_i$.

\begin{proposition}
$\mathcal{C}_0 = \mathbb{K}1$, $\mathcal{C}_i = \mathbb{K}1 \oplus \bigoplus_{j=1}^i \text{Im}(F_j)$, and $\mathcal{C}= \mathbb{K}1 \oplus \bigoplus_{j=1}^{\infty} \text{Im}(F_j)$.
\end{proposition}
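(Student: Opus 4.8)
The plan is to identify the flag $(\mathcal{C}_i)_{i\ge 0}$ with the filtration of $V$ by kernels of the iterated reduced coproduct, and then to show that $V$ is cofreely built from its primitives by the maps $F_j$. First I would unwind the inductive definition of the flag. Write $V^+=\bigoplus_{n\ge 1}V_n$ for the augmentation ideal. The coaction on $\mathcal{C}/\mathcal{C}_i$ is $(\mathrm{id}\otimes\pi_i)\circ\Delta$, with $\pi_i\colon V\to V/\mathcal{C}_i$ the quotient map, and since $1\in\mathcal{C}_0\subseteq\mathcal{C}_i$ the decomposition $\Delta(x)=x\otimes 1+1\otimes x+\hat{\Delta}(x)$ gives $(\mathrm{id}\otimes\pi_i)\Delta(x)=1\otimes\bar x+(\mathrm{id}\otimes\pi_i)\hat{\Delta}(x)$. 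Hence $x\in\mathcal{C}_{i+1}$ if and only if $\hat{\Delta}(x)\in V\otimes\mathcal{C}_i$. For the base case, $\Delta(x)=1\otimes x$ forces $x\otimes 1+\hat{\Delta}(x)=0$; as the two summands lie in the complementary graded spaces $V_{|x|}\otimes V_0$ and $V^+\otimes V^+$, this gives $x\in\mathbb{K}1$, so $\mathcal{C}_0=\mathbb{K}1$. The same comparison shows that $\hat{\Delta}(x)\in V\otimes\mathbb{K}1$ forces $\hat{\Delta}(x)=0$, so $\mathcal{C}_1=\ker\hat{\Delta}$.

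Next I would prove by induction on $i$ that $\mathcal{C}_i=\ker\hat{\Delta}^{\,i}$ for $i\ge 1$, where $\hat{\Delta}^{\,i}$ denotes the $i$-fold iterated reduced coproduct (so $\hat{\Delta}^{\,i}$ has $i+1$ output legs). Assuming $\mathcal{C}_i=\ker\hat{\Delta}^{\,i}$, the criterion above reads $x\in\mathcal{C}_{i+1}\iff(\mathrm{id}\otimes\hat{\Delta}^{\,i})\hat{\Delta}(x)=0$, and coassociativity rewrites the left-hand side as $\hat{\Delta}^{\,i+1}(x)$. Because $\mathcal{H}$ is connected graded, $\hat{\Delta}^{\,i}$ vanishes on $V_n$ whenever $i\ge n$, so $\bigcup_i\mathcal{C}_i=\bigcup_i\ker\hat{\Delta}^{\,i}=V=\mathcal{C}$; this already yields the exhaustion asserted in the statement.

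The hard part will be the decomposition $V=\mathbb{K}1\oplus\bigoplus_{j\ge 1}\text{Im}(F_j)$. Directness is exactly Lemma~\ref{lemma::FiDirect}, so the content is surjectivity: every element is a linear combination of iterated growths of primitives. I would prove this by induction on the grading using Proposition~\ref{prop::Projection}, rewriting its recursion as $x=\pi(x)+x^{(1)}\top\pi(x^{(2)})$. Here $\pi(x)\in P(\mathcal{H})=\text{Im}(F_1)$, while in $\hat{\Delta}(x)=x^{(1)}\otimes x^{(2)}$ both legs have \emph{strictly smaller, strictly positive} degree; by the induction hypothesis each $x^{(1)}$ lies in $\bigoplus_{j\ge 1}\text{Im}(F_j)$, say $x^{(1)}=\sum_m F_{j_m}(\eta_m)$, and the defining relation $F_j(\eta)\top p=F_{j+1}(\eta\otimes p)$ for primitive $p$ gives $x^{(1)}\top\pi(x^{(2)})\in\bigoplus_{j\ge 2}\text{Im}(F_j)$. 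Thus $x$ lies in the claimed span. The positivity of the degree of $x^{(1)}$ is what removes any boundary issue with $1\top(-)$, and I expect this step to be the crux: it is precisely the assertion that the presence of a natural growth operator forces the underlying coalgebra of $\mathcal{H}$ to be cofree, in line with the discussion following Example~\ref{ex:shuffle}.

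Finally I would assemble everything. By Lemma~\ref{lemma::Deconcatenation} the total map $F=\bigoplus_{j\ge 0}F_j\colon T(P(\mathcal{H}))\to V$ (with $F_0$ the unit) intertwines the reduced deconcatenation $\hat{\Delta}_{\mathrm{dec}}$ on the tensor coalgebra with $\hat{\Delta}$, whence $\hat{\Delta}^{\,i}\circ F=F^{\otimes(i+1)}\circ\hat{\Delta}_{\mathrm{dec}}^{\,i}$. Since $F$ is injective (Lemma~\ref{lemma::FiDirect}) and surjective (previous paragraph), and since $\ker\hat{\Delta}_{\mathrm{dec}}^{\,i}=\bigoplus_{m=0}^{i}P(\mathcal{H})^{\otimes m}$ consists of tensors of length at most $i$, injectivity of $F^{\otimes(i+1)}$ yields
\[
	\mathcal{C}_i=\ker\hat{\Delta}^{\,i}=F\Big(\bigoplus_{m=0}^{i}P(\mathcal{H})^{\otimes m}\Big)=\mathbb{K}1\oplus\bigoplus_{j=1}^{i}\text{Im}(F_j),
\]
for every $i\ge 0$, and passing to the union gives $\mathcal{C}=\mathbb{K}1\oplus\bigoplus_{j\ge 1}\text{Im}(F_j)$.
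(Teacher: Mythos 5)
Your proof is correct. The paper itself gives no argument for this proposition (it is part of the Foissy material recalled without proof), so there is nothing to compare against line by line; what you supply is a complete, self-contained derivation. Your route has two independent halves, both sound: (i) the identification $\mathcal{C}_i=\ker\hat{\Delta}^{\,i}$, obtained from the observation that $x\in\mathcal{C}_{i+1}$ iff $\hat{\Delta}(x)\in V\otimes\mathcal{C}_i$ together with coassociativity, and (ii) cofreeness, i.e.\ surjectivity of $\bigoplus_j F_j$ onto the augmentation ideal via the recursion $x=\pi(x)+x^{(1)}\top\pi(x^{(2)})$ of Proposition~\ref{prop::Projection}, combined with the directness and the intertwining $\hat{\Delta}\circ F=(F\otimes F)\circ\hat{\Delta}_{\mathrm{dec}}$ from Lemmas~\ref{lemma::FiDirect} and~\ref{lemma::Deconcatenation}. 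You correctly isolate the crux (that $x^{(1)}$ has strictly positive degree, so no $1\top(-)$ terms arise, and that $F_j(\eta)\top p=F_{j+1}(\eta\otimes p)$ pushes the second summand into $\bigoplus_{j\ge 2}\mathrm{Im}(F_j)$). The only step left implicit is that each $\mathcal{C}_i$ is a subcomodule, which is needed for the quotient coaction on $\mathcal{C}/\mathcal{C}_i$ — and hence $\mathcal{C}_{i+1}$ — to be well defined; this follows immediately from your identification, since $x\in\ker\hat{\Delta}^{\,i}$ gives $(\mathrm{id}\otimes\hat{\Delta}^{\,i})\Delta(x)=\hat{\Delta}^{\,i+1}(x)=0$ by coassociativity, so $\Delta(x)\in V\otimes\ker\hat{\Delta}^{\,i}$. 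Worth one sentence, but not a gap.
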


The last proposition tells us that any element $x \in \mathcal{H}$ can be written as:
\begin{align*}
	x=\sum_{i=1}^n p^i_{k_i} \top \cdots \top p^i_1,
\end{align*}
for some primitive elements $p^i_1,\ldots,p^i_{k_i}$ and some $n$.Together with Lemma \ref{lemma::FiDirect}, this gives us that every Hopf algebra with natural growth is cofree.

\begin{proposition} [\cite{Foissy2002}]
\label{prop::FreeAssociative}
The graded dual $\mathcal{H}^{\ast}=(V^{\ast},\star,\Delta_{\star})$ is a free associative algebra. Its primitive elements form a free Lie algebra.
\end{proposition}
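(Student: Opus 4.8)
The plan is to deduce both assertions from the cofreeness of $\mathcal{H}$ established just above, by passing to the graded dual. Write $W := P(\mathcal{H})$ for the space of primitives. First I would package the maps $F_i$ from Lemma~\ref{lemma::FiDirect} into a single graded linear map $\Phi \colon \bigoplus_{i \geq 0} W^{\otimes i} \to \mathcal{H}$, sending the empty tensor to $1$ and $p_i \otimes \cdots \otimes p_1$ to $p_i \top \cdots \top p_1$. By Lemma~\ref{lemma::FiDirect} each $F_i$ is injective with direct images, and the preceding proposition gives $\mathcal{H} = \mathbb{K}1 \oplus \bigoplus_{i\geq 1}\mathrm{Im}(F_i)$, so $\Phi$ is a graded linear isomorphism. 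The essential point is that $\Phi$ is a morphism of coalgebras: Lemma~\ref{lemma::Deconcatenation} says precisely that $\hat{\Delta}$ acts on $p_i \top \cdots \top p_1$ by deconcatenation, which is the reduced coproduct of the cofree (tensor) coalgebra $T^{c}(W) = \bigoplus_{i\geq 0}W^{\otimes i}$. Hence $\mathcal{H}$ is isomorphic, as a graded connected coalgebra, to $T^{c}(W)$ — this is the cofreeness asserted before the statement.

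Next I would dualize. The graded dual of the deconcatenation coalgebra $T^{c}(W)$ is the tensor algebra $T(W^{\ast})$ with the concatenation product, i.e.\ the free associative algebra on $W^{\ast}$. Since by definition the product $\star$ on $\mathcal{H}^{\ast}$ is dual to the coproduct $\Delta$ of $\mathcal{H}$, transposing $\Phi$ yields an algebra isomorphism $(\mathcal{H}^{\ast},\star) \cong (T(W^{\ast}), \text{concatenation})$. This proves the first assertion and requires no commutativity.

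For the second assertion I would invoke the Milnor--Moore / Cartier--Quillen structure theorem. The coproduct $\Delta_{\star}$ of $\mathcal{H}^{\ast}$ is dual to the \emph{product} of $\mathcal{H}$; since the Hopf algebras in play (the shuffle Hopf algebra of Example~\ref{ex:shuffle}, as well as $\mathcal{H}_{\scriptscriptstyle{\text{BCK}}}$ and $\mathcal{H}_{\scriptscriptstyle{\text{MKW}}}$) are commutative, $\mathcal{H}^{\ast}$ is a connected graded \emph{cocommutative} Hopf algebra over a field of characteristic zero. By Milnor--Moore it is therefore the universal enveloping algebra $\mathcal{U}(\mathfrak{p})$ of its Lie algebra of primitives $\mathfrak{p} = P(\mathcal{H}^{\ast})$. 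Combining this with the first assertion, $\mathcal{U}(\mathfrak{p})$ is free associative, and by the classical fact that $\mathcal{U}(L)$ is free associative if and only if $L$ is a free Lie algebra, $\mathfrak{p}$ is a free Lie algebra.

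The grading bookkeeping and the duality ``dual of cofree is free'' are routine. I expect the only genuine subtlety to be the passage to primitives: being free associative does not by itself pin down the primitives of a Hopf algebra, so the cocommutativity of $\mathcal{H}^{\ast}$ — equivalently the commutativity of $\mathcal{H}$ — is exactly what is needed in order to apply Milnor--Moore and transport freeness from the associative level to the Lie level.
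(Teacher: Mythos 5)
The paper offers no proof of this proposition (it is attributed to \cite{Foissy2002}), so I compare your argument with Foissy's route. Your treatment of the first assertion is correct and is the standard one: Lemma \ref{lemma::FiDirect}, the decomposition $\mathcal{H}=\mathbb{K}1\oplus\bigoplus_i \mathrm{Im}(F_i)$, and Lemma \ref{lemma::Deconcatenation} identify $\mathcal{H}$ with the tensor coalgebra on $P(\mathcal{H})$ (the natural growth operators in play are homogeneous and the graded components are finite-dimensional, so the ``routine bookkeeping'' does go through), and the graded dual of a deconcatenation coalgebra is a free associative algebra.

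For the second assertion you take a genuinely different path from Foissy. You use Cartier--Quillen--Milnor--Moore to write $\mathcal{H}^{\ast}=\mathcal{U}(\mathfrak{p})$ and then the implication ``$\mathcal{U}(L)$ free associative $\Rightarrow L$ free Lie''. That implication is true for connected graded Lie algebras in characteristic zero, but it is not a throwaway fact: the honest proof passes through $\mathrm{Tor}^{T(V)}_{\geq 2}(\mathbb{K},\mathbb{K})=0$, hence $H_2(\mathfrak{p};\mathbb{K})=0$, hence $\mathfrak{p}$ is free by the homological characterization of free graded Lie algebras. As written, the phrase ``classical fact'' is carrying real weight and should be replaced by a citation or a proof. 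Foissy's own argument sidesteps this: he first upgrades the coalgebra isomorphism to a Hopf algebra isomorphism of $\mathcal{H}$ with a shuffle Hopf algebra (the content of Corollary \ref{cor::ShuffleIso}, which uses commutativity of $\mathcal{H}$), dualizes to obtain $\mathcal{H}^{\ast}\cong T(K^{\ast})$ as Hopf algebras with the generators primitive, and concludes by Friedrichs' theorem that $P(\mathcal{H}^{\ast})$ is the free Lie algebra on $K^{\ast}$; note that in the present paper this would require invoking a result stated only later. Your explicit flagging of commutativity of $\mathcal{H}$ as the hypothesis needed for the second assertion is correct and worth keeping --- the general Definition \ref{def:naturalgrowth} does not impose it, but all the Hopf algebras to which the proposition is applied here (shuffle, BCK, MKW) are commutative.
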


Let the primitive degree $\deg: \mathcal{H} \to \mathbb{N}$ be defined by
\begin{align}
	\deg(x)
	&:= \inf \{k\ |\ \exists p_1,\ldots,p_k \in P(\mathcal{H})(x= p_k \top \cdots \top p_1 )  \} \nonumber \\
	&= \inf\{k\ |\ \hat{\Delta}^k(x)=0\}. \label{pdegree}
\end{align}
Note that this gives a filtration for the Hopf algebra, which we denote $|x|$, and that is different from the original grading. We now define a second Hopf algebra, denoted $gr(\mathcal{H})=(V,\shuffle,\Delta)$, over the same vector space. As an algebra, $gr(\mathcal{H})$ is the graded algebra associated to the filtration $\deg$. Recall that $V=\mathbb{K}1 \oplus \bigoplus_{j=1}^{\infty}Im(F_j)$ and note that $Im(F_j)$ are the homogeneous components with respect to the grading. The product and coproduct are given by:
$$
	(p_{j + \ell} \top \cdots \top p_{j+1}) \shuffle (p_j \top \cdots \top p_1 )
	:= \sum_{\sigma(j,\ell)-\text{shuffle}} p_{\sigma(j+\ell)} \top \cdots \top p_{\sigma(1)},
$$
respectively
$$
	\Delta(p_j \top \cdots \top p_1)
	:= (p_j \top \cdots \top p_1 \otimes 1) + (1 \otimes p_j \top \cdots \top p_1) 
	+ \sum_{k=2}^{j}(p_j \top \cdots \top p_k) \otimes (p_{k-1}\top \cdots \top p_1),
$$
where a $\sigma(j,\ell)-$shuffle is a permutation of $\{1,\ldots,j+\ell\}$ such that $\sigma(1) < \cdots < \sigma(j)$ and $\sigma(j+1)< \cdots < \sigma(j+\ell)$.

\begin{remark}
Although $\top$ is in general not an associative product, the Hopf algebra $gr(\mathcal{H})$ looks rather similar to the tensor Hopf algebra over the primitive elements.
\end{remark}

\begin{remark}
The identity map $\mathrm{id}: V \to V$ is a coalgebra morphism $\mathcal{H} \to gr(\mathcal{H})$, but not an algebra morphism. We will see later that the two Hopf algebras are isomorphic whenever $\mathcal{H}$ is commutative, but via a different map.
\end{remark}

Following \cite{Foissy2002}, we now classify all coalgebra endomorphisms of $\mathcal{H}$.

\begin{proposition}[\cite{Foissy2002}] \label{prop::CoalgebraMorphism}
Let $(u_i)_{i \in \mathbb{N}}$ be a family of maps $P(\mathcal{H})^{\otimes i} \to P(\mathcal{H})$, and define the linear map $\Phi_{(u_i)}$ by:
\begin{align*}
\Phi_{(u_i)}(1)
	&= 1,\\
	\Phi_{(u_i)}(p_n \top \cdots \top p_1) 
	&= \sum_{k=1}^n \sum_{a_1+\cdots +a_k=n} 
	F_k\big((u_{a_1}\otimes \cdots \otimes u_{a_k}) (F_n^{-1}(p_n \top \cdots \top p_1))\big).
\end{align*}
Then $\Phi_{(u_i)}$ is a coalgebra morphism of $\mathcal{H}$ (and of $gr(\mathcal{H})$). Furthermore, if $\Phi$ is any coalgebra morphism of $\mathcal{H}$, then there exists a family $(u_i)_{i \in \mathbb{N}}$ such that $\Phi=\Phi_{(u_i)}$.
\end{proposition}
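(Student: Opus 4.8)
The plan is to recognise $\mathcal{H}$ as a cofree conilpotent coalgebra and then read off the family $(u_i)$ as the data of the ``corestriction'' of a coalgebra endomorphism onto its cogenerators. First I would assemble the maps $F_i$ into a single linear map $F := \mathrm{id}_{\mathbb{K}} \oplus \bigoplus_{j \geq 1} F_j : T^c(P(\mathcal{H})) \to \mathcal{H}$, where $T^c(W) = \bigoplus_{n \geq 0} W^{\otimes n}$ denotes the tensor coalgebra with deconcatenation coproduct. By Lemma \ref{lemma::FiDirect} each $F_j$ is injective with images in direct sum, and together with the decomposition $V = \mathbb{K}1 \oplus \bigoplus_{j \geq 1}\mathrm{Im}(F_j)$ established above this makes $F$ a linear isomorphism. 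The crucial point is that $F$ is a morphism of coalgebras: Lemma \ref{lemma::Deconcatenation} says precisely that the reduced coproduct of $\mathcal{H}$, read through the $F_j$, is deconcatenation, while $F_0 = \mathrm{id}_{\mathbb{K}}$ matches the (co)unit, so the full coproducts agree as well. Hence $F$ is a coalgebra isomorphism, and since $\mathcal{H}$ is connected graded, hence conilpotent, it exhibits $\mathcal{H}$ as the cofree conilpotent coalgebra on $P(\mathcal{H})$. The identical computation identifies the coalgebra underlying $gr(\mathcal{H})$ with $T^c(P(\mathcal{H}))$ too (the two coproducts coincide, which is the content of the Remark that $\mathrm{id}$ is a coalgebra morphism $\mathcal{H} \to gr(\mathcal{H})$), so every statement about the coalgebra $\mathcal{H}$ transfers verbatim to $gr(\mathcal{H})$.

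With cofreeness in hand I would invoke the universal property. Let $\pi : \mathcal{H} \to P(\mathcal{H})$ be the projection onto $\mathrm{Im}(F_1) = P(\mathcal{H})$ along the remaining summands. For a tensor coalgebra, coalgebra endomorphisms $\Psi$ are in bijection with linear maps $g : \overline{T^c(P(\mathcal{H}))} \to P(\mathcal{H})$ via $g = \pi \circ \Psi \circ F$, the inverse of this correspondence assigning to $g$ its unique coalgebra lift. Decomposing $g$ along the tensor grading yields exactly a family $(u_i)_{i \geq 1}$ with $u_i : P(\mathcal{H})^{\otimes i} \to P(\mathcal{H})$, and conversely any such family reassembles into one $g$. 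This already delivers both halves of the statement at the abstract level: every coalgebra endomorphism of $\mathcal{H}$ comes from a unique family $(u_i)$, and every family determines a coalgebra endomorphism.

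What remains, and is the only genuinely computational step, is to check that the abstract lift of $g$ coincides with the explicit formula defining $\Phi_{(u_i)}$. For the tensor coalgebra the lift is the standard one whose component in $P(\mathcal{H})^{\otimes k}$ equals $g^{\otimes k} \circ \hat{\Delta}^{k-1}$, with $\hat{\Delta}^{k-1}$ the $(k-1)$-fold iterated reduced coproduct. Evaluated on the word $F_n^{-1}(p_n \top \cdots \top p_1) = p_n \otimes \cdots \otimes p_1$, the iterated reduced coproduct of $T^c(P(\mathcal{H}))$ produces the sum over all compositions $a_1 + \cdots + a_k = n$ of the $k$ consecutive blocks of lengths $a_1, \ldots, a_k$; since $g$ restricted to the length-$a$ component is $u_a$, applying $g^{\otimes k}$ gives $(u_{a_1} \otimes \cdots \otimes u_{a_k})(p_n \otimes \cdots \otimes p_1) \in P(\mathcal{H})^{\otimes k}$, and transporting this back into $\mathcal{H}$ through $F_k$ reproduces exactly the displayed summand. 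The main obstacle is therefore purely bookkeeping: confirming that iterated deconcatenation yields precisely the consecutive-block compositions indexed by $a_1 + \cdots + a_k = n$, and that $g$ restricts to $u_a$ in length $a$. If one prefers not to invoke cofreeness as a black box, the same conclusion can be reached directly: prove $\Delta \circ \Phi_{(u_i)} = (\Phi_{(u_i)} \otimes \Phi_{(u_i)}) \circ \Delta$ by induction on the primitive degree using Lemma \ref{lemma::Deconcatenation}, and prove surjectivity by recovering the components as $u_i = \pi \circ \Phi \circ F_i$ for an arbitrary coalgebra endomorphism $\Phi$ and checking $\Phi = \Phi_{(u_i)}$ by the same induction.
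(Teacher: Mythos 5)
Your argument is correct and complete. The paper itself gives no proof of this proposition --- it is stated with a citation to Foissy --- so there is nothing to compare line by line; but the route you take (using Lemma \ref{lemma::FiDirect} and the decomposition $V=\mathbb{K}1\oplus\bigoplus_j \mathrm{Im}(F_j)$ to build a linear isomorphism $F$, observing via Lemma \ref{lemma::Deconcatenation} that $F$ intertwines $\hat{\Delta}$ with deconcatenation so that $\mathcal{H}$ is cofree conilpotent on $P(\mathcal{H})$, and then reading off endomorphisms from the couniversal property with the explicit lift $\sum_k g^{\otimes k}\circ\hat{\Delta}^{k-1}$) is essentially Foissy's own argument, and all the ingredients you invoke are established earlier in the paper. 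The one point worth making explicit is that a coalgebra endomorphism automatically fixes $1$ (its image is group-like and $\mathcal{H}$ is connected), which is needed before applying the bijection with maps $\overline{T^c(P(\mathcal{H}))}\to P(\mathcal{H})$; with that noted, both the well-definedness and the surjectivity halves of the statement follow exactly as you describe.
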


\begin{proposition} [\cite{Foissy2002}]
\label{prop::BijectiveCoalgMorphism}
The coalgebra morphism $\Phi_{(u_i)}$ is bijective if and only if $u_1: P(\mathcal{H}) \to P(\mathcal{H})$ is bijective.
\end{proposition}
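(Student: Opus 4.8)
The plan is to exploit the grading $V=\mathbb{K}1\oplus\bigoplus_{n\ge 1}\mathrm{Im}(F_n)$ from the decomposition preceding Proposition \ref{prop::FreeAssociative}, writing $V_0=\mathbb{K}1$ and $V_n=\mathrm{Im}(F_n)$. Since $F_n$ is injective (Lemma \ref{lemma::FiDirect}), it is a linear isomorphism $P(\mathcal{H})^{\otimes n}\xrightarrow{\ \sim\ }V_n$. Reading off the defining formula for $\Phi_{(u_i)}$, the inner sum runs over $k\in\{1,\ldots,n\}$ and each $F_k$-term lands in $V_k$; hence $\Phi_{(u_i)}(V_n)\subseteq\bigoplus_{k=1}^{n}V_k$, so $\Phi_{(u_i)}$ is lower triangular with respect to this grading. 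The only term contributing to the top block $V_n$ is $k=n$, which forces $a_1=\cdots=a_n=1$ and yields the diagonal block $\Phi_{n,n}=F_n\circ u_1^{\otimes n}\circ F_n^{-1}\colon V_n\to V_n$. In particular $\Phi_{0,0}=\mathrm{id}$, and since $u_1$ bijective implies $u_1^{\otimes n}$ bijective, each $\Phi_{n,n}$ is bijective exactly when $u_1$ is.

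For the direction ``$u_1$ bijective $\Rightarrow\Phi_{(u_i)}$ bijective'', I would run the standard top-down triangular inversion, which is valid here because the grading is bounded below by $0$ and every element of $V$ has finite support. For injectivity, if $x=\sum_{n\le N}x_n$ with $x_N\ne 0$ and $N$ maximal, then the degree-$N$ component of $\Phi_{(u_i)}(x)$ equals $\Phi_{N,N}(x_N)\ne 0$, so $\Phi_{(u_i)}(x)\ne 0$. For surjectivity, given $y=\sum_{n\le M}y_n$ I set $x_M:=\Phi_{M,M}^{-1}(y_M)$ and replace $y$ by $y-\Phi_{(u_i)}(x_M)$, whose support now lies in degrees $<M$; iterating down to degree $0$ produces a finite preimage $x$ with $\Phi_{(u_i)}(x)=y$.

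For the converse ``$\Phi_{(u_i)}$ bijective $\Rightarrow u_1$ bijective'', I would avoid the grading and argue directly on primitives. The formula gives $\Phi_{(u_i)}|_{P(\mathcal{H})}=u_1$, so $\ker u_1\subseteq\ker\Phi_{(u_i)}=\{0\}$ yields injectivity of $u_1$. For surjectivity, take a primitive $q$ and, using surjectivity of $\Phi_{(u_i)}$, pick $x$ with $\Phi_{(u_i)}(x)=q$. Since $\Phi_{(u_i)}$ is a coalgebra morphism (Proposition \ref{prop::CoalgebraMorphism}), it commutes with $\hat{\Delta}$, whence $(\Phi_{(u_i)}\otimes\Phi_{(u_i)})\hat{\Delta}(x)=\hat{\Delta}(q)=0$; injectivity of $\Phi_{(u_i)}$ makes $\Phi_{(u_i)}\otimes\Phi_{(u_i)}$ injective, forcing $\hat{\Delta}(x)=0$, i.e.\ $x\in P(\mathcal{H})$, and then $u_1(x)=\Phi_{(u_i)}(x)=q$. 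The only genuinely delicate point is the first paragraph: correctly identifying the diagonal block of the triangular map as $u_1^{\otimes n}$ and checking that no composition $(a_1,\ldots,a_k)$ with $k<n$ leaks back into $V_n$; once this triangular normal form is in place, both implications are routine.
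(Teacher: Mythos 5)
Your argument is correct: the triangular decomposition of $\Phi_{(u_i)}$ with respect to $V=\mathbb{K}1\oplus\bigoplus_{n\ge 1}\mathrm{Im}(F_n)$, with diagonal blocks $F_n\circ u_1^{\otimes n}\circ F_n^{-1}$, together with the direct coalgebra-morphism argument for the converse, is exactly the standard proof. The paper itself omits the proof entirely (deferring to Foissy), and your route is essentially the one taken there, so there is nothing substantive to contrast.
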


\begin{proposition} [\cite{Foissy2002}]
\label{prop::BialgebraMorphism}
Let $\Phi=\Phi_{(u_i)}$ be a coalgebra endomorphism. Denote by $(u_i^n)_{i \in \mathbb{N}}$ the family defined by $u_i^n=u_i$ if $i \leq n$ and $u_i^n=0$ if $n<i$. Then:
\begin{enumerate}
\item $\Phi_{(u_i)}$ is a bialgebra endomorphism of $\mathcal{H}$ if and only if:
\begin{align*}
	u_{i+j}(F_{i+j}^{-1}(x_i \shuffle x_j)  )
	=-\Phi_{(u_i^{i+j-1})}(x_i \cdot x_j)+\Phi_{(u_i^{i+j-1})}(x_i)\cdot \Phi_{(u_i^{i+j-1})}(x_j),
\end{align*}
for all $x_i \in Im(F_i)$ and all $x_j \in Im(F_j)$.
\item $\Phi_{(u_i)}$ is a bialgebra endomorphism of $gr(\mathcal{H})$ if and only if:
\begin{align*}
	u_{i+j}(F_{i+j}^{-1}(x_i \shuffle x_j)  )
	=-\Phi_{(u_i^{i+j-1})}(x_i \shuffle x_j)+\Phi_{(u_i^{i+j-1})}(x_i)\shuffle \Phi_{(u_i^{i+j-1})}(x_j),
\end{align*}
for all $x_i \in Im(F_i)$ and all $x_j \in Im(F_j)$.
\end{enumerate}
\end{proposition}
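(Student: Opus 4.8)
\textbf{Proof plan for Proposition \ref{prop::BialgebraMorphism}.}

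The plan is to reduce the bialgebra-endomorphism condition for $\Phi_{(u_i)}$ to the single multiplicativity requirement $\Phi(x_i \cdot x_j) = \Phi(x_i)\cdot \Phi(x_j)$ evaluated on the homogeneous pieces $x_i \in \mathrm{Im}(F_i)$, $x_j \in \mathrm{Im}(F_j)$, and then to isolate the top primitive-degree contribution. Since $\Phi_{(u_i)}$ is already a coalgebra morphism by Proposition \ref{prop::CoalgebraMorphism}, a bialgebra endomorphism is precisely a coalgebra morphism that is also multiplicative and unital. Unitality is immediate from $\Phi_{(u_i)}(1)=1$, so the whole content is multiplicativity. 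Because $V = \mathbb{K}1 \oplus \bigoplus_{j\geq 1}\mathrm{Im}(F_j)$ and both products respect (filter) the primitive degree, it suffices to test multiplicativity on pairs of homogeneous elements $x_i \in \mathrm{Im}(F_i)$ and $x_j \in \mathrm{Im}(F_j)$; I would make this reduction explicit first.

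Next I would analyse the primitive-degree filtration of the product. The key structural input is that $x_i \cdot x_j$ has primitive degree at most $i+j$, with its top-degree component $(i+j)$ equal to $x_i \shuffle x_j$ (this is exactly what it means for $gr(\mathcal{H})$ to be the associated graded algebra, together with the definition of $\shuffle$ as the shuffle-graded product). Writing $\Phi = \Phi_{(u_i)}$ and comparing the two sides of $\Phi(x_i\cdot x_j) = \Phi(x_i)\cdot\Phi(x_j)$, I would split off the contribution of $u_{i+j}$, which acts only on the top-degree component $F_{i+j}^{-1}(x_i \shuffle x_j)$, from the contributions of the lower maps $u_1,\ldots,u_{i+j-1}$. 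The latter are captured precisely by the truncated family $(u_i^{i+j-1})$, since truncating at level $i+j-1$ kills exactly the top map while leaving everything of strictly smaller primitive degree untouched; this is why the truncated morphism $\Phi_{(u_i^{i+j-1})}$ appears on the right-hand side of the claimed identity. Rearranging this comparison so that the $u_{i+j}$-term sits alone on the left gives the stated equation for part (1). For part (2) the argument is identical but with $gr(\mathcal{H})$ in place of $\mathcal{H}$, so the genuine product $\cdot$ is everywhere replaced by its associated graded product $\shuffle$; here the identity is cleaner because $\shuffle$ is already homogeneous of primitive degree, so no degree-mixing occurs.

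The main obstacle I anticipate is justifying cleanly that the truncated family reproduces exactly the lower-degree part of $\Phi$ on the product, i.e.\ that $\Phi_{(u_i)}(x_i\cdot x_j)$ and $\Phi_{(u_i^{i+j-1})}(x_i\cdot x_j)$ differ only by the term $F_{i+j}\big(u_{i+j}(F_{i+j}^{-1}(\cdot))\big)$ applied to the top component. This requires tracking how the explicit formula for $\Phi_{(u_i)}$ in Proposition \ref{prop::CoalgebraMorphism} interacts with the primitive-degree grading: one must check that the only way $u_{i+j}$ can enter the evaluation of $\Phi$ on an element of primitive degree $i+j$ is through the single summand with $k=1$ and $a_1 = i+j$, and that this summand precisely extracts the top-degree component $x_i \shuffle x_j$ of the product. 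I would make this rigorous by an induction on primitive degree, using Lemma \ref{lemma::FiDirect} (which gives $\hat\Delta^{i-1}\circ F_i = \mathrm{id}$ and the directness of the sum $\bigoplus \mathrm{Im}(F_j)$) to legitimately invert $F_{i+j}$ on the top component and to guarantee that the degree decomposition is unambiguous. Once these bookkeeping facts are in place, both equivalences follow by a direct rearrangement rather than any further nontrivial computation.
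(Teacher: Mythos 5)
The paper does not reproduce a proof of this proposition (it defers entirely to Foissy), but your plan is a correct reconstruction of the standard argument: reduce to multiplicativity on homogeneous pieces, use that the $\mathrm{Im}(F_{i+j})$-component of $x_i\cdot x_j$ is $x_i\shuffle x_j$, observe that $u_{i+j}$ enters $\Phi_{(u_i)}$ only through the $k=1$, $a_1=i+j$ summand (all other summands being those of the truncated morphism $\Phi_{(u_i^{i+j-1})}$, which also agrees with $\Phi$ on $x_i$ and $x_j$ separately), and rearrange. The bookkeeping points you flag are exactly the ones that need Lemma \ref{lemma::FiDirect} and the explicit formula of Proposition \ref{prop::CoalgebraMorphism}, and they go through as you describe; no gap.
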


\begin{remark}
Consider the shuffle algebra $\mathcal{H}_{\shuffle}$, then $P(\mathcal{H}_{\shuffle})^{\otimes i}$ can be identified with words of length $i$. We look at a coalgebra endomorphism $\Phi$ and examine the condition to be a bialgebra morphism. First look at a word $w=ab$ which is of length two, then
\begin{align*}	
	u_1(a)\shuffle u_1(b)
	&=\Phi(a)\shuffle \Phi(b)\\
	&=\Phi(a\shuffle b)\\
	&=u_2(a \shuffle b) + u_1(a)u_1(b)+u_1(b)u_1(a)\\
	&=u_2(a \shuffle b) + u_1(a)\shuffle u_1(b),
\end{align*}
tells us that $u_2$ vanishes on products. Then one can use an inductive argument to show that all $u_i$ vanishes on products, hence classifying all endomorphisms by maps from the dual primitives to the primitives.
\end{remark}

Let $\mathcal{M}$ denote the augmentation ideal, which is the same for both $\mathcal{H}$ and $gr(\mathcal{H})$. Both Hopf algebras are graded by $|\cdot |$, denote the decomposition with respect to this grading by:
\begin{align*}
	\mathcal{M}
	&= \bigoplus_i \mathcal{M}_i,\\
	\mathcal{M}^2
	&= \bigoplus_i \mathcal{M}_i^2.
\end{align*}

Note that:
\begin{align*}
\mathcal{M}_i=\mathcal{M}_i^2 + \sum_j \mathcal{H}_i\cap Im(F_j).
\end{align*}
Hence we can choose $K_{i,j}\subset \mathcal{H}_i \cap Im(F_j)$ such that:
\begin{align*}
\mathcal{M}_i = \mathcal{M}_i^2 \oplus \bigoplus_j K_{i,j}.
\end{align*}
Denote $K= \bigoplus_{i,j}K_{i,j}$.

\begin{lemma}[\cite{Foissy2002}]
$K$ generates the algebra $(V,\cdot )$.
\end{lemma}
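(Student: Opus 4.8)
The plan is to reduce the statement to the standard fact that, in a connected graded algebra, a graded subspace generates the whole algebra as soon as it surjects onto the indecomposables $\mathcal{M}/\mathcal{M}^2$. Concretely, I would show that the unital subalgebra $\langle K\rangle \subseteq V$ generated by $K$ under the product $\cdot$ contains the augmentation ideal $\mathcal{M}$; since $V = \mathbb{K}1 \oplus \mathcal{M}$ and $\langle K\rangle$ is unital, this forces $\langle K\rangle = V$, which is the claim.

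The input I would use is exactly the decomposition already recorded above: for every degree $i$ with respect to the connected grading $|\cdot|$ (under which both $\mathcal{H}$ and $gr(\mathcal{H})$ are graded, and each $\text{Im}(F_j)$ is a graded subspace) one has $\mathcal{M}_i = \mathcal{M}_i^2 \oplus \bigoplus_j K_{i,j}$, so in particular $\bigoplus_j K_{i,j} \subseteq K \subseteq \langle K\rangle$ in each degree. The proof is then an induction on $i$.

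For the base case, note that for the minimal positive degree appearing in $\mathcal{M}$ there are no products of two augmentation-ideal elements of that total degree, so $\mathcal{M}_i^2 = 0$ and hence $\mathcal{M}_i = \bigoplus_j K_{i,j} \subseteq K \subseteq \langle K\rangle$. For the inductive step, assume $\mathcal{M}_{i'} \subseteq \langle K\rangle$ for all $i' < i$. Any element of $\mathcal{M}_i^2$ is a sum of products $a \cdot b$ with $a \in \mathcal{M}_{i'}$, $b \in \mathcal{M}_{i-i'}$ and $1 \le i' \le i-1$; by the induction hypothesis $a,b \in \langle K\rangle$, whence $a \cdot b \in \langle K\rangle$ and thus $\mathcal{M}_i^2 \subseteq \langle K\rangle$. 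Combined with $\bigoplus_j K_{i,j} \subseteq \langle K\rangle$ and the displayed decomposition, this gives $\mathcal{M}_i \subseteq \langle K\rangle$, closing the induction and yielding $\mathcal{M} = \bigoplus_i \mathcal{M}_i \subseteq \langle K\rangle$.

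I do not expect a genuine obstacle here: the content is entirely the graded Nakayama / generation-in-degree mechanism, and the only point to be careful about is that the grading $|\cdot|$ used is the connected (positively graded) one, so that every factor of an element of $\mathcal{M}^2$ has strictly smaller degree and the induction bottoms out. The role of the construction of $K$ is precisely to supply, in each degree, a complement to $\mathcal{M}_i^2$ inside $\mathcal{M}_i$ adapted to the primitive-degree pieces $\text{Im}(F_j)$; this is all the generation argument needs, and no property of $\top$ beyond its compatibility with the grading (already used to make each $\text{Im}(F_j)$ graded) enters.
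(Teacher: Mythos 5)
Your argument is correct, and it is essentially the standard graded-Nakayama induction that Foissy uses; the paper itself supplies no proof here, only the citation to \cite{Foissy2002}, so there is nothing to diverge from. The one point you rightly flag is the grading: the induction must run over the original connected grading (for which $\cdot$ is genuinely graded and $\mathcal{M}_i^2$ vanishes in minimal positive degree), not the primitive-degree filtration $\deg$, and your use of the decomposition $\mathcal{M}_i = \mathcal{M}_i^2 \oplus \bigoplus_j K_{i,j}$ together with $\mathcal{M}^2 = \bigoplus_i \mathcal{M}_i^2$ is exactly what the construction of $K$ was set up to provide.
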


\begin{lemma}[\cite{Foissy2002}]
$K$ generates the algebra $(V,\shuffle )$.
\end{lemma}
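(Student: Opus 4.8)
The plan is to mirror the argument used for the preceding lemma. In a connected graded algebra, a graded subspace generates the whole algebra as soon as it spans a complement of the decomposables, so it suffices to show that $K$ spans a complement of $\mathcal{M}\shuffle\mathcal{M}$ inside $\mathcal{M}$. Concretely, I would prove by induction on the original degree $i$ that $\mathcal{M}_i = K_i + \mathcal{M}_i^{\shuffle 2}$, where $K_i = \bigoplus_j K_{i,j}$ and $\mathcal{M}_i^{\shuffle 2}$ denotes the degree-$i$ part of $\mathcal{M}\shuffle\mathcal{M}$. In the inductive step the space $\mathcal{M}_i^{\shuffle 2}$ is spanned by products $u \shuffle v$ of bihomogeneous elements of strictly smaller original degree, which by the induction hypothesis already lie in the $\shuffle$-subalgebra generated by $K$; since $K_i$ lies there as well, the whole statement reduces to the spanning identity $\mathcal{M}_i = K_i + \mathcal{M}_i^{\shuffle 2}$.

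To establish that identity I would exploit that, by the very construction of $gr(\mathcal{H})$, the product $\shuffle$ is the graded product associated to $\cdot$ for the primitive-degree grading $V = \mathbb{K}1 \oplus \bigoplus_j \mathrm{Im}(F_j)$ coming from Lemma \ref{lemma::FiDirect}. That is, for bihomogeneous $u \in \mathrm{Im}(F_p)$ and $v \in \mathrm{Im}(F_q)$ one has $u \cdot v = u \shuffle v + r$, with $u \shuffle v \in \mathrm{Im}(F_{p+q})$ and $r$ of strictly smaller primitive degree. Passing to the primitive-degree-associated graded converts the decomposition $\mathcal{M}_i = \mathcal{M}_i^2 \oplus \bigoplus_j K_{i,j}$ into its analogue for $\shuffle$, and in particular gives $\dim \mathcal{M}_i^{\cdot 2} = \dim \mathcal{M}_i^{\shuffle 2}$ in every degree. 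Combined with $\dim \mathcal{M}_i = \dim K_i + \dim \mathcal{M}_i^{\cdot 2}$ from the preceding lemma, the desired spanning identity becomes equivalent to the transversality $K_i \cap \mathcal{M}_i^{\shuffle 2} = 0$, which one checks bidegree by bidegree using that both $K$ and $\mathcal{M}\shuffle\mathcal{M}$ are bihomogeneous for the two gradings.

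I expect this last transversality to be the main obstacle. The subtlety is that $\mathcal{M}\cdot\mathcal{M}$ and $\mathcal{M}\shuffle\mathcal{M}$ agree only after passing to the associated graded, and not as subspaces of $V$: the corrections $r$ above remain in the same original degree while lowering the primitive degree, so cancellation between distinct primitive degrees within a single original degree can in principle make a complement $K$ of $\mathcal{M}\cdot\mathcal{M}$ meet $\mathcal{M}\shuffle\mathcal{M}$. Controlling this requires using the genuine bigrading $V = \bigoplus_{i,j}\bigl(\mathcal{H}_i \cap \mathrm{Im}(F_j)\bigr)$ rather than merely the primitive-degree filtration, so that the $K_{i,j}$ can be taken bihomogeneous and transversal to $\mathcal{M}\shuffle\mathcal{M}$ in each bidegree. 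This is exactly the point where Foissy's treatment of the BCK case applies, and since the MKW Hopf algebra has been shown to carry a natural growth operator, his argument transfers verbatim; together with Proposition \ref{prop::FreeAssociative} it then exhibits $K$ as a common generating set for both $(V,\cdot)$ and $(V,\shuffle)$.
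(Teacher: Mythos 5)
The paper gives no proof of this lemma (nor of the companion statement for $(V,\cdot)$); both are stated with a bare citation to Foissy, so your proposal has to be judged on its own merits. Your framing is the right one and matches the skeleton of Foissy's argument: for a connected graded algebra, generation by the graded subspace $K$ is equivalent to the spanning identity $\mathcal{M}_i = K_i + \mathcal{M}_i^{\shuffle 2}$ in every degree (where $\mathcal{M}_i^{\shuffle 2}$ denotes the degree-$i$ part of $\mathcal{M}\shuffle\mathcal{M}$), and the only bridge between the two products is the associated-graded relation $u\cdot v = u\shuffle v + r$ with $r$ of strictly smaller primitive degree.

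The difficulty is that the proof is not actually carried out. You reduce the lemma to the conjunction of (i) $\dim\mathcal{M}_i^{2}=\dim\mathcal{M}_i^{\shuffle 2}$ and (ii) $K_i\cap\mathcal{M}_i^{\shuffle 2}=0$; given the one inclusion that is automatic, this conjunction is \emph{equivalent} to the spanning identity, so nothing is gained unless both are proved, and you prove neither. For (i), ``passing to the associated graded'' only yields $\mathcal{M}^{\shuffle 2}\subseteq gr(\mathcal{M}^{2})$, hence $\dim\mathcal{M}_i^{\shuffle 2}\le\dim\mathcal{M}_i^{2}$: for a filtered algebra the associated graded of $\mathcal{M}^2$ may strictly contain the square of the augmentation ideal of the associated graded algebra (compare $\mathcal{U}(\mathfrak{g})$ with $S(\mathfrak{g})$, where $xy-yx=[x,y]\in\mathcal{M}^2$ survives in filtration degree one). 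The reverse inequality needs an input you do not supply, for instance that $(V,\shuffle)$ is the shuffle algebra on $P(\mathcal{H})$ and hence free commutative on Lyndon words, while $(V,\cdot)$ is free commutative by the Leray theorem when $\mathcal{H}$ is commutative, so that both indecomposable spaces have dimensions determined by the common Hilbert series. For (ii) you concede the point and assert that Foissy's argument ``transfers verbatim''; but this transversality \emph{is} the content of the lemma and cannot be outsourced in a self-contained proof. A further issue you would have to address: a bihomogeneous complement $K_i$ of $\mathcal{M}_i^{2}$ is not unique, and different choices distribute their dimensions differently over the primitive degrees $j$, so a bidegree-by-bidegree transversality check of the kind you describe must explain why it is insensitive to (or constrains) that choice. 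As it stands, the proposal is a correct reformulation followed by an appeal to authority at the decisive step, i.e.\ it has a genuine gap.
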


\begin{corollary}[\cite{Foissy2002}] \label{cor::ShuffleIso}
If $(V,\cdot)$ is commutative, then there exists an algebra morphism between $(V,\cdot )$ and $(V,\shuffle)$ that is the identity on $K$. This map is furthermore a Hopf algebra isomorphism between $\mathcal{H}$ and $gr(\mathcal{H})$.
\end{corollary}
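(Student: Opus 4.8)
The plan is to produce the desired map as the unique algebra morphism extending the identity on $K$, and then to verify separately that it is bijective and that it intertwines the (common) coproduct. The key structural input is the structure theorem for connected graded commutative Hopf algebras over a field of characteristic zero (Leray's theorem): since $(V,\cdot)$ is commutative, connected and graded, it is free as a commutative algebra on its indecomposables $\mathcal{M}/\mathcal{M}^2$. By the very choice of the $K_{i,j}$ we have $\mathcal{M}_i = \mathcal{M}_i^2 \oplus \bigoplus_j K_{i,j}$, so the projection restricts to an isomorphism $K \to \mathcal{M}/\mathcal{M}^2$ and $(V,\cdot)$ is the free commutative algebra on $K$. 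The universal property then applies to the inclusion $K \hookrightarrow (V,\shuffle)$ — legitimate because $\shuffle$ is commutative — and yields a unique algebra morphism $\phi\colon (V,\cdot) \to (V,\shuffle)$ with $\phi|_K = \mathrm{id}_K$. This is exactly where commutativity of $(V,\cdot)$ is essential: without freeness the relations among elements of $K$ in $(V,\cdot)$ need not be respected by $\shuffle$, and $\phi$ would not be well defined.

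Next I would check bijectivity. Since $\top$ is additive for the original grading, both $\cdot$ and $\shuffle$ preserve it, and $K$ is a graded subspace, so $\phi$ is a morphism of graded vector spaces for the original grading. Its image is a $\shuffle$-subalgebra containing $K$; by the preceding lemma asserting that $K$ generates $(V,\shuffle)$, this image is all of $V$, so $\phi$ is surjective. As $(V,\cdot)$ and $(V,\shuffle)$ share the underlying graded vector space $V = \bigoplus_n V_n$ with each $V_n$ finite-dimensional, a graded surjective endomorphism is degreewise bijective; hence $\phi$ is an algebra isomorphism.

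It then remains to show that $\phi$ is a coalgebra morphism, and the essential observation is that $\mathcal{H}$ and $gr(\mathcal{H})$ carry the \emph{same} coproduct $\Delta$ (the identity is a coalgebra isomorphism between them, as recorded in the remark preceding this corollary together with Lemma \ref{lemma::Deconcatenation}). Thus it suffices to prove $\Delta \circ \phi = (\phi \otimes \phi) \circ \Delta$. Both sides are algebra morphisms from $(V,\cdot)$ into the commutative algebra $\big((V,\shuffle)^{\otimes 2},\, \shuffle \otimes \shuffle\big)$: the left because $\phi$ and the coproduct of $gr(\mathcal{H})$ are algebra morphisms, the right because the coproduct of $\mathcal{H}$ and $\phi \otimes \phi$ are. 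Since $K$ generates $(V,\cdot)$, it is enough to check the equality on $k \in K$, where it reduces to the single identity $(\phi \otimes \phi)\hat{\Delta}(k) = \hat{\Delta}(k)$.

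This last identity is the main obstacle and is precisely where Foissy's combinatorics enters. I would prove it by induction on the primitive degree: writing $k \in K \cap \text{Im}(F_j)$ as a combination of $\top$-words $p_j \top \cdots \top p_1$, Lemma \ref{lemma::Deconcatenation} expresses $\hat{\Delta}(k)$ as a sum of tensors of shorter $\top$-words, hence of elements of strictly smaller primitive degree. Using that $\phi$ preserves the primitive degree, that it induces the identity on $\mathcal{M}/\mathcal{M}^2$, and the inductive hypothesis, one shows that each such coproduct-factor is fixed by $\phi$, so that $\phi \otimes \phi$ fixes $\hat{\Delta}(k)$. Controlling $\phi$ on these factors — equivalently, verifying that the indecomposable representatives of $\top$-words behave compatibly under $\cdot$ and $\shuffle$ — is the delicate point; everything else is formal once it is in place. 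Having established the coalgebra morphism property, $\phi$ is a bialgebra isomorphism between connected graded Hopf algebras and therefore automatically commutes with the antipodes, so it is the desired Hopf algebra isomorphism between $\mathcal{H}$ and $gr(\mathcal{H})$.
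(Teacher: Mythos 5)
The paper gives no proof of this corollary (it is cited to \cite{Foissy2002}), so your proposal can only be measured against the two preceding generation lemmas and the general toolkit of Section \ref{sect:Foissy}. Your first three steps are sound and are surely the intended route: the phrase ``\emph{the} algebra morphism that is the identity on $K$'' presupposes that $(V,\cdot)$ is free commutative on $K$, which is exactly Leray's theorem for connected graded commutative Hopf algebras in characteristic zero combined with the decomposition $\mathcal{M}_i=\mathcal{M}_i^2\oplus\bigoplus_j K_{i,j}$; surjectivity follows from the lemma that $K$ generates $(V,\shuffle)$, and a graded dimension count gives bijectivity (you tacitly assume $\top$, hence $\shuffle$, is homogeneous for the original grading, which holds in all of the paper's examples but is not part of Definition \ref{def:naturalgrowth}). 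The reduction of the coalgebra property to the single identity $(\phi\otimes\phi)\hat{\Delta}(k)=\hat{\Delta}(k)$ for $k\in K$ is also clean and correct.

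The gap is in your last step, and it is not minor: the mechanism you propose --- ``each such coproduct-factor is fixed by $\phi$'' --- is false in general, not merely unproven. By Lemma \ref{lemma::Deconcatenation}, for $k\in K\cap \mathrm{Im}(F_j)$ the tensor factors of $\hat{\Delta}(k)$ are $\top$-words of lengths $1,\dots,j-1$; these lie in $\mathrm{Im}(F_a)$ for $a<j$ but are \emph{not} elements of $K$, and $\phi$ cannot fix all of them: if $\phi$ were the identity on every $\mathrm{Im}(F_a)$ it would be the identity on $V$, which is not an algebra morphism from $\cdot$ to $\shuffle$. Concretely, in $\mathcal{H}_{\scriptscriptstyle{\mathrm{BCK}}}$ the choice $K_{1,1}=\mathbb{K}\Forest{[]}$, $K_{2,1}=\mathbb{K}\,\pi(\Forest{[]}\Forest{[]})$ forces $K_{2,2}=0$, and then
$\phi(\Forest{[[]]})=\tfrac12\big(\Forest{[]}\shuffle\Forest{[]}-\pi(\Forest{[]}\Forest{[]})\big)=\Forest{[[]]}-\tfrac12\pi(\Forest{[]}\Forest{[]})\neq\Forest{[[]]}$,
even though $\Forest{[[]]}=\Forest{[]}\top\Forest{[]}$ occurs as a tensor factor of $\hat{\Delta}$ applied to length-three $\top$-words; in sufficiently high degree some $K_{i,j}$ with $j\ge 3$ is unavoidable, so such factors do appear in $\hat{\Delta}(K)$. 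Hence $(\phi\otimes\phi)\hat{\Delta}(k)=\hat{\Delta}(k)$ can only hold through cancellations (or a very careful choice of the complements $K_{i,j}$), and proving it is precisely the substantive content of the corollary; your induction does not supply that argument, as you yourself half-concede by calling it ``the delicate point.'' The tools the section actually provides for closing this gap are Propositions \ref{prop::CoalgebraMorphism} and \ref{prop::BialgebraMorphism}: one realises the desired map as a coalgebra morphism $\Phi_{(u_i)}$ and solves recursively for the $u_i$ from the obstruction identity, where commutativity of $\cdot$ is what makes the right-hand side symmetric and hence consistent with the relations of $\shuffle$. As written, your proposal establishes the algebra isomorphism but not the Hopf algebra isomorphism.
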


\begin{remark}
Corollary \ref{cor::ShuffleIso} says that every commutative cofree Hopf algebra is isomorphism to a shuffle Hopf algebra.
\end{remark}


\subsection{Cocycles}
\label{ssec:cocycles}

Recall the definition of the $B_+$-operator in the context of the BCK Hopf algebra of non-planar rooted trees. It maps a forest of rooted trees, $\tau_1 \cdots \tau_n$, $\tau_i \in \mathcal{T}$, to a single tree by grafting the roots of each tree $\tau_i$ in the forest to a new root, that is, $B^{\scalebox{0.7}{\Forest{[]}}}_+(\tau_1 \cdots \tau_n)=\tau \in \mathcal{T}$. Note that $|B^{\scalebox{0.7}{\Forest{[]}}}_+(\tau_1 \cdots \tau_n)|=1 + \sum_{i=1}^n |\tau_i|$.
$$
B^{\scalebox{0.7}{\Forest{[]}}}_+(1)=\Forest{[]}
\qquad
B^{\scalebox{0.7}{\Forest{[]}}}_+(\Forest{[]})=\Forest{[[]]}
\qquad
B^{\scalebox{0.7}{\Forest{[]}}}_+(\Forest{[]}\Forest{[]})=\Forest{[[][]]}
\qquad
B^{\scalebox{0.7}{\Forest{[]}}}_+(\Forest{[]}\Forest{[[]]})=\Forest{[[][[]]]}.
$$ 
It satisfies the so-called Hochschild-1-cocycle identity
$$
	\Delta_{\scriptscriptstyle{\text{BCK}}} B^{\scalebox{0.7}{\Forest{[]}}}_+ 
	= B^{\scalebox{0.7}{\Forest{[]}}}_+ \otimes 1 
	+ (\mathrm{id} \otimes B^{\scalebox{0.7}{\Forest{[]}}}_+) \Delta_{\scriptscriptstyle{\text{BCK}}}, 
$$
which determines the coproduct uniquely as a morphism of unital algebras. Recall that the single vertex tree, $\Forest{[]}$, is primitive in the BCK Hopf algebra. For $\tau$ a forest, we note that $\tau \top \Forest{[]}:=B^{\scalebox{0.7}{\Forest{[]}}}_+(\tau)$ satisfies the natural growth identity \eqref{eq::NaturalGrowth}. Indeed, as the linear map $(\mathrm{id} - \eta \circ \epsilon)$ is the identity on non-empty forests and zero else, we check that
$$
	\hat{\Delta}_{\scriptscriptstyle{\text{BCK}}} (\tau \top \Forest{[]}) 
	= \big((\mathrm{id} - \eta \circ \epsilon) \otimes \cdot \top \Forest{[]} \big)(\tau \otimes 1 
	+ \tau^{(1)} \otimes \tau^{(2)})
	= \tau \otimes \Forest{[]} + \tau^{(1)} \otimes (\tau^{(2)}\top \Forest{[]}).
$$
Observe that identity \eqref{eq::NaturalGrowth} continues to hold if we replace \Forest{[]} in $\cdot \top \Forest{[]}$ by another primitive element in the BCK Hopf algebra. 

More generally, let $p$ be a primitive element in the Hopf algebra $(\mathcal{H},\top)$ with a natural growth operator, and define the operator $B_+^p: \mathcal{H} \to \mathcal{H}$ by:
\begin{align*}
	B_+^p(x)=x \top p.
\end{align*}
Regarding the primitive degree defined in \eqref{pdegree}, we see that 
$$
	\deg(B^p_+(x))= 1+ \deg(x).
$$

Note that the following proposition does not appear in \cite{Foissy2002}.

\begin{proposition}
The operator $B_+^p$ satisfies the cocycle property:
\begin{align} \label{eq::CoCycleIdentity}
	\Delta( B_+^p(x) )=B_+^p(x) \otimes 1 + (\mathrm{id} \otimes B_+^p)\Delta(x).
\end{align}
\end{proposition}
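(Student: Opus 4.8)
The plan is to reduce the statement to the natural growth identity \eqref{eq::NaturalGrowth} by trading the full coproduct for the reduced one. For every $z \in \mathcal{H}$ one has $\Delta(z) = \hat{\Delta}(z) + z \otimes 1 + 1 \otimes z$, which is merely the defining relation of $\hat{\Delta}$ rewritten. Applying this to $z = B_+^p(x) = x \top p$ and then invoking \eqref{eq::NaturalGrowth} (legitimate since $p \in P(\mathcal{H})$) yields
\[
	\Delta(x \top p) = x \otimes p + x^{(1)} \otimes (x^{(2)} \top p) + (x \top p) \otimes 1 + 1 \otimes (x \top p).
\]

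I would then expand the right-hand side of \eqref{eq::CoCycleIdentity}. Substituting $\Delta(x) = \hat{\Delta}(x) + x \otimes 1 + 1 \otimes x$ and applying $\mathrm{id} \otimes B_+^p$ gives
\[
	B_+^p(x) \otimes 1 + (\mathrm{id} \otimes B_+^p)\Delta(x) = (x \top p) \otimes 1 + x^{(1)} \otimes (x^{(2)} \top p) + x \otimes (1 \top p) + 1 \otimes (x \top p).
\]
A term-by-term comparison of the two displays shows that they agree except that the first computation contributes $x \otimes p$ where the second contributes $x \otimes (1 \top p)$. Hence the cocycle property is equivalent to the single identity $1 \top p = p$.

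The remaining, and main, point is therefore to establish $1 \top p = p$; I expect this to be the only genuine obstacle. I would derive it directly from \eqref{eq::NaturalGrowth} by setting $x = 1$: since $\hat{\Delta}(1) = -1 \otimes 1$, the identity becomes $\hat{\Delta}(1 \top p) = 1 \otimes p - 1 \otimes (1 \top p) = 1 \otimes (p - 1 \top p)$. As $\top$ respects the grading, $1 \top p$ lies in the augmentation ideal $\ker \epsilon$, so its reduced coproduct must take values in $\ker \epsilon \otimes \ker \epsilon$; the factor $1 \notin \ker \epsilon$ in the first tensor slot then forces $p - 1 \top p = 0$. (Equivalently, applying $\epsilon \otimes \mathrm{id}$ gives $p - 1 \top p = -\epsilon(1 \top p) 1$, which vanishes once $1 \top p$ has no degree-zero component.) In the concrete BCK and MKW settings this is immediate, since $1 \top p = B_+^p(1) = p$. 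With $1 \top p = p$ in hand the two displays coincide and \eqref{eq::CoCycleIdentity} follows.
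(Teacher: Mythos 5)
Your argument is correct and is essentially the paper's own proof: both expand $\Delta(B_+^p(x))$ via $\Delta=\hat{\Delta}+(\cdot)\otimes 1+1\otimes(\cdot)$ and invoke the natural growth identity \eqref{eq::NaturalGrowth}. The only difference is that you make explicit the normalization $1\top p=p$, which the paper uses silently when it recombines $1\otimes(x\top p)+x\otimes p+x^{(1)}\otimes(x^{(2)}\top p)$ into $x_{(1)}\otimes(x_{(2)}\top p)$; your observation that Definition \ref{def:naturalgrowth} alone only yields $1\top p=p+\epsilon(1\top p)1$, so that one must additionally know $1\top p$ has no degree-zero component (immediate in the BCK and MKW cases), is accurate and in fact slightly more careful than the paper.
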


\begin{proof}
Since $p$ is primitive, we can use identity \eqref{eq::NaturalGrowth}:
\begin{align*}
	\Delta( B_+^p(x) )
	&= 1 \otimes B_+^p(x) + B_+^p(x) \otimes 1 + \hat{\Delta}(x\top p)\\
	&=1 \otimes B_+^p(x) + B_+^p(x) \otimes 1 + x \otimes p + x^{(1)} \otimes x^{(2)} \top p \\
	&= B_+^p(x) \otimes 1 + x_{(1)} \otimes x_{(2)} \top p \\
	&= B_+^p(x) \otimes 1 + (\mathrm{id}  \otimes B_+^p)\Delta(x).
\end{align*}
\end{proof}

\begin{remark}
\label{rmk:foissyfrench}
In \cite{FoissyFrench}, Foissy studied coalgebras endowed with maps $B_p^+$ satisfying equation \eqref{eq::CoCycleIdentity}. He showed that if a coalgebras has such a map for each primitive element, and furthermore satisfies some additional properties (that follow automatically if the coalgebra is furthermore a Hopf algebra), then the coalgebra is isomorphic to the tensor coalgebra.
\end{remark}

Recall that for a connected graded Hopf algebra the antipode can be computed recursively \cite{Manchon2008}
$$
	S(x)=-x - S(x^{(1)}) x^{(2)} = -x - x^{(1)} S(x^{(2)}). 
$$

\begin{corollary}[\cite{Foissy2002}]
Let $S$ be the antipode of $\mathcal{H}$, then:
\begin{align*}
	S(B_+^p(x))
	&=-B_+^p(x)-S(x)p-S(x^{(1)})B_+^p(x^{(2)})\\
	&=-B_+^p(x) + xp -x^{(1)}S(B_+^p(x^{(2)})).
\end{align*}
\end{corollary}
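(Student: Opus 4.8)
The plan is to obtain both identities as immediate consequences of the two recursive forms of the antipode for a connected graded Hopf algebra recalled just above the statement, namely $S(z)=-z - S(z^{(1)})z^{(2)}$ and $S(z)=-z - z^{(1)}S(z^{(2)})$, applied to $z=B_+^p(x)$. The one ingredient these recursions require is the reduced coproduct of $B_+^p(x)$, which is exactly what the preceding cocycle computation supplies.

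First I would record $\hat{\Delta}(B_+^p(x))$. Since $p$ is primitive, the natural growth property \eqref{eq::NaturalGrowth} (equivalently the cocycle identity \eqref{eq::CoCycleIdentity} just proven) gives
\[
	\hat{\Delta}(B_+^p(x)) = x \otimes p + x^{(1)} \otimes B_+^p(x^{(2)}),
\]
so that, writing $z=B_+^p(x)$ in Sweedler notation for the reduced coproduct, $z^{(1)} \otimes z^{(2)} = x \otimes p + x^{(1)} \otimes B_+^p(x^{(2)})$. This is the only place the hypothesis on $\top$ enters.

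For the first identity I would substitute this into $S(z)=-z - S(z^{(1)})z^{(2)}$; applying $m(S\otimes\mathrm{id})$ to the two summands of $\hat{\Delta}(z)$ produces $S(x)p$ and $S(x^{(1)})B_+^p(x^{(2)})$ respectively, and collecting terms yields the first line. For the second identity I would instead use $S(z)=-z - z^{(1)}S(z^{(2)})$; here $m(\mathrm{id}\otimes S)$ sends the first summand to $x\,S(p)$ and the second to $x^{(1)}S(B_+^p(x^{(2)}))$. The only additional input is the elementary fact that $S(p)=-p$ for a primitive element, whence $x\,S(p)=-xp$ and collecting terms gives the second line.

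The derivation is essentially mechanical once $\hat{\Delta}(B_+^p(x))$ is in hand, so I do not expect a genuine obstacle. The only points needing care are the bookkeeping of Sweedler indices for the \emph{reduced} coproduct (as opposed to the full coproduct) and checking that the boundary contribution $B_+^p(1)=1\top p$ is consistent with the $x\otimes p$ term appearing above; both are routine consistency checks rather than real difficulties.
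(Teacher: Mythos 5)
Your proposal is correct and follows exactly the route the paper intends: the corollary is stated immediately after the recursive antipode formulas $S(z)=-z-S(z^{(1)})z^{(2)}=-z-z^{(1)}S(z^{(2)})$ and the cocycle/natural-growth identity, and plugging $\hat{\Delta}(B_+^p(x))=x\otimes p+x^{(1)}\otimes B_+^p(x^{(2)})$ into those two recursions (using $S(p)=-p$ for the second line) is precisely the computation you carry out. No further comment is needed.
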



\section{Natural growth in the MKW Hopf algebra}

We are now ready to construct a natural growth operator on the Hopf algebra $\mathcal{H}_{\scriptscriptstyle{\text{MKW}}}$. Define the operator 
$$	
	\top: \mathcal{OF}^{\mathcal{C}} \otimes \mathcal{OF}^{\mathcal{C}} \to \mathcal{OF}^{\mathcal{C}}
$$ 
as a sum over vertices in the forest appearing in the second argument:
\begin{align}
\label{eq:natgrowthMKW}
	\omega_1 \top \omega_2 
	= \frac{1}{|\omega_2|} \sum_{v \in N(\omega_2)} \omega_1 \top_{\!v}\ \omega_2,
\end{align}
where $\omega_1 \top_{\!v}\ \omega_2$ is defined by grafting all roots in $\omega_1$ onto the vertex $v$, and $|\omega_2|$ is the number of vertices in $\omega_2$. The outgoing edges from the vertex $v$ are shuffled with the edges added by the grafting. For example:
\begin{align*}
	3\ \Forest{[a[b]]}\Forest{[c]} \top\ \Forest{[d[e][f]]}
	&= \Forest{[d[a[b]][c][e][f]]}+\Forest{[d[a[b]][e][c][f]]}+\Forest{[d[a[b]][e][f][c]]}+\Forest{[d[e][a[b]][c][f]]}\\
	&\quad +\Forest{[d[e][a[b]][f][c]]}+\Forest{[d[e][f][a[b]][c]]}+\Forest{[d[e[a[b]][c]][f]]}+\Forest{[d[e][f[a[b]][c]]]}.
\end{align*}

\begin{proposition} 
\label{prop::MKWNaturalGrowth}
The bilinear map $\top$ is a natural growth operator for the MKW Hopf algebra.
\end{proposition}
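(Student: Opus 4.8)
The plan is to verify the defining identity \eqref{eq::NaturalGrowth} directly from the description of $\Delta_{\scriptscriptstyle{\text{MKW}}}$ by left-admissible cuts. Since \eqref{eq::NaturalGrowth} is linear in $x$ and in $y$, and since $1 \top y = \tfrac{1}{|y|}\sum_{v \in N(y)} (1 \top_{\!v} y) = \tfrac{1}{|y|}\sum_{v} y = y$ for every $y$ (grafting the empty forest changes nothing), it suffices to expand $\hat\Delta_{\scriptscriptstyle{\text{MKW}}}(x \top y) = \tfrac{1}{|y|}\sum_{v \in N(y)} \hat\Delta_{\scriptscriptstyle{\text{MKW}}}(x \top_{\!v} y)$ and to collect the contributions of the left-admissible cuts of each forest $x \top_{\!v} y$. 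I would sort these cuts according to how their edge set meets the three kinds of edges present in $x \top_{\!v} y$: the internal edges of the trees of $x$, the grafting edges joining the roots of $x$ to $v$, and the original edges of $y$.

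First I would treat the cuts whose edge set avoids all original edges of $y$. Such a cut leaves the whole of $y$ in the trunk and only redistributes $x$, pruning an upper portion of $x$ and leaving a lower portion grafted at $v$. The key point, a shuffle-compatible version of the Hochschild cocycle \eqref{eq::CoCycleIdentity} for $B_+$, is that as the cut ranges over all these possibilities the pruned/remaining pair runs over the full coproduct $\Delta_{\scriptscriptstyle{\text{MKW}}}(x) = x_{(1)} \otimes x_{(2)}$, the remaining part being re-grafted at $v$; discarding the trivial term $1 \otimes (x \top_{\!v} y)$, the piece $x \otimes 1$ produces $x \otimes y$ (as $1 \top_{\!v} y = y$), while $\hat\Delta_{\scriptscriptstyle{\text{MKW}}}(x) = x^{(1)} \otimes x^{(2)}$ produces $x^{(1)} \otimes (x^{(2)} \top_{\!v} y)$. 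The cut realising $x \otimes y$ is left-admissible precisely for the summand of $\top_{\!v}$ grafting $x$ leftmost, so it occurs once at each of the $|y|$ vertices; together with $\sum_{v} x^{(2)} \top_{\!v} y = |y|\,(x^{(2)} \top y)$, the prefactor $\tfrac{1}{|y|}$ collapses these contributions to exactly $x \otimes y + x^{(1)} \otimes (x^{(2)} \top y)$, which is the right-hand side of \eqref{eq::NaturalGrowth}.

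It then remains to show that every cut meeting at least one original edge of $y$ contributes nothing. Here I would argue that such a cut restricts to a nontrivial left-admissible cut of $y$, so that, after summing over the shuffles built into $\top_{\!v}$ and over the vertices $v$, these cuts assemble into a bilinear expression in $x$ and the two legs of $\hat\Delta_{\scriptscriptstyle{\text{MKW}}}(y) = y^{(1)} \otimes y^{(2)}$. As $y$ is primitive, $\hat\Delta_{\scriptscriptstyle{\text{MKW}}}(y) = 0$ and all of these terms vanish, completing the proof.

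The main obstacle is the bookkeeping in this last step. A grafting that is shuffled into the existing children of $v$ interacts delicately with left-admissibility, since cutting an edge forces the cutting of every edge to its left at the same vertex; consequently a single cut of $y$ extends to $x \top_{\!v} y$ in many shuffle-dependent ways, and one must check that summing over the orderings produced by $\top_{\!v}$ restores a clean factorisation through $\hat\Delta_{\scriptscriptstyle{\text{MKW}}}(y)$. This already shows up in the smallest nontrivial case $x = \Forest{[]}$: one finds $\Forest{[]} \top \Forest{[[]]} = \Forest{[[][]]} + \tfrac{1}{2}\Forest{[[[]]]}$, whose reduced coproduct carries surplus terms proportional to $\hat\Delta_{\scriptscriptstyle{\text{MKW}}}(\Forest{[[]]}) = \Forest{[]} \otimes \Forest{[]}$; these surplus terms cancel for the primitive combination $\Forest{[]}\,\Forest{[]} - \Forest{[[]]}$, and making this cancellation precise for arbitrary $x$ and arbitrary primitive $y$ is the technical heart of the argument.
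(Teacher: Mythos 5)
Your argument is correct and follows essentially the same route as the paper: both proofs verify the identity for each $\top_{\!v}$ by sorting the left-admissible cuts of $x \top_{\!v} y$, extract the right-hand side of \eqref{eq::NaturalGrowth} from the cuts leaving $y$ intact (including your correct observation that the $x \otimes y$ term survives only in the leftmost shuffle summand), and kill the remaining cuts by primitivity of $y$. The bookkeeping you flag as the technical heart is exactly what the paper's partition into cuts below $v$ versus above $v$ handles: in the first case $P^c(x\top_{\!v}y)=x\top_{\!v}P^c(y)$ and $T^c(x\top_{\!v}y)=T^c(y)$, in the second $P^c(x\top_{\!v}y)=P^c(x)\shuffle P^c(y)$ and $T^c(x\top_{\!v}y)=T^c(x)\top_{\!v}T^c(y)$, so every such contribution visibly factors through $\hat{\Delta}_{\scriptscriptstyle{\mathrm{MKW}}}(y)=0$.
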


\begin{proof}
We have to show that the reduced coproduct and the map $\top$ satisfy
\begin{align*}
	\hat{\Delta}_{\scriptscriptstyle{\text{MKW}}}(x \top y) 
	= x \otimes y + x^{(1)} \otimes (x^{(2)} \top y ),
\end{align*}
for a primitive element $y$ in $\mathcal{H}_{\scriptscriptstyle{\text{MKW}}}$. We will first show the identity
\begin{align*}
	\hat{\Delta}_{\scriptscriptstyle{\text{MKW}}}(x \top_{\!v\ } y) 
	=  x \otimes y + x^{(1)} \otimes (x^{(2)} \top_{\!v}\ y ),
\end{align*}
for each vertex $v$ in $y$, and then obtain the result by summing over $v$ on both sides. To show this, we partition the left-admissible cuts of the tree $x \top_{\!v} y$ into two sets:
\begin{enumerate}

\item Let $c$ be a left admissible cut of $x \top_{\!v}\  y$ that cuts below the vertex $v$, then no edges of $x$ can be cut. Hence $P^c(x \top_{\!v} y)=x \top_{\!v}\  P^c(y)$ and $T^c(x \top_{\!v}\  y)=T^c(y)$. However, since $y$ is assumed to be primitive, the sum over these cuts will be zero.

\item Let $c$ be a cut of $x \top_{\!v}\  y$ that cuts above the vertex $v$, then we claim that $P^c(x \top_{\!v}\  y)=P^c(x)\shuffle P^c(y)$ and $T^c(x \top_{\!v}\  y)=T^c(x)\top_{\!v}\  T^c(y)$, where we understand $P^c(x)$ as being the restriction of the cut $c$ to the forest $x$ and similarly for the other terms. Then, since $y$ is primitive, the sum over these cuts will contribute $x \otimes y + x^{(1)} \otimes (x^{(2)} \top_{\!v}\  y )$ to $\hat{\Delta}_{\scriptscriptstyle{\text{MKW}}}(x \top_{\!v}\  y)$. To see that $P^c(x \top_{\!v} y)=P^c(x)\shuffle P^c(y)$, recall that the roots of $x$ are shuffled with the branches outgoing from $v$ in the grafting $x \top_{\!v} y$. Hence each planar order of the cut branches in $P^c(x \top_{\!v}\  y)$ will be recovered in the shuffle product $P^c(x)\shuffle P^c(y)$, and each order in the shuffle product will appear in a cut of $x \top_{\!v}\  y$. Branches cut off from other vertices than $v$ will appear in the shuffle product on both sides of the equality. The equality $T^c(x \top_{\!v} y)=T^c(x)\top_{\!v} T^c(y)$ says that cutting edges off $x$ before you graft is the same as cutting edges off $x$ after you graft, which clearly holds.

\end{enumerate}
The result is obtained by summing over the two cases.
\end{proof}

\begin{remark}
\label{rmk:BCK}
The natural growth operator on the BCK Hopf algebra, which was used in the computation of primitive elements in Example \ref{ex:piBCK} of Section \ref{sect:Foissy}, is defined on non-planar rooted forests similarly to \eqref{eq:natgrowthMKW}, by omitting the shuffling of the outgoing edges from the vertex $v$ with the edges added by the grafting in the prescription of the operation $\top_{\!v\ }$.
\end{remark}

Since we have found a natural growth operator on $\mathcal{H}_{\scriptscriptstyle{\text{MKW}}}$, all results from Section \ref{sect:Foissy} can be applied. Indeed, we can find the primitive elements by using the projection map $\pi$ defined recursively  by:
\begin{align*}
	\pi(x)=x-x^{(1)} \top \pi(x^{(2)}).
\end{align*}
For example:
\begin{align*}
	\pi(\Forest{[]}\Forest{[[]]})
	&=\Forest{[]}\Forest{[[]]}-( \Forest{[]}\top \pi(\Forest{[[]]})
		+\Forest{[]}\top \pi(\Forest{[]}\Forest{[]})+2\Forest{[]}\Forest{[]}\top \pi(\Forest{[]})  )\\
	&=\Forest{[]}\Forest{[[]]}- \Forest{[]}\top (\Forest{[[]]}- \Forest{[]}\top \Forest{[]} )
		-\Forest{[]}\top (\Forest{[]}\Forest{[]} - \Forest{[]}\top \Forest{[]} )-2\Forest{[[][]]}\\
	&=\Forest{[]}\Forest{[[]]}-\Forest{[]}\top 0 - \frac{1}{2}\Forest{[[]]}\Forest{[]}
		-\frac{1}{2}\Forest{[]}\Forest{[[]]}+\Forest{[[][]]}+\frac{1}{2}\Forest{[[[]]]}-2\Forest{[[][]]}\\
	&=\frac{1}{2}[\Forest{[]},\Forest{[[]]}]+\frac{1}{2}\Forest{[[[]]]}-\Forest{[[][]]},
\end{align*}
which is indeed primitive for the coproduct $\Delta_{\scriptscriptstyle{\text{MKW}}}$. We also see that any tree, except the single-vertex tree, is projected to zero, because any non-trivial tree $\tau$ can be written as $\tau=\omega \top \Forest{[]}$, for some forest $\omega$. The property \eqref{eq::NaturalGrowth} then yields:
\begin{align*}
	\pi(\omega \top \Forest{[]})
	&=\omega \top \Forest{[]} 
	- \big(\omega \top \Forest{[]} 
	+ \omega^{(1)} \top \pi(\omega^{(2)}\top \Forest{[]} ) \big),
\end{align*}
and the statement follows by induction over the degree of the forest. \\

Next, recall Proposition \ref{prop::FreeAssociative}. It states that the graded dual to $\mathcal{H}_{\scriptscriptstyle{\text{MKW}}}$, i.e., the planar Grossman--Larson Hopf algebra $\mathcal{H}_{\scriptscriptstyle{\text{GL}}}$, is a free associative algebra. In a later section, we will use this result to show that planarly branched rough paths can be seen as geometric rough paths, similarly to what was done for branched rough paths in  \cite{BoedihardjoChevyrev2019}. See also \cite{HairerKelly2012}. As a preliminary step, we show an isomorphism between $\mathcal{H}_{\scriptscriptstyle{\text{GL}}}$ and a tensor Hopf algebra. Let $T(\mathcal{PT}^{\mathcal{C}})=(\mathcal{OF}^{\mathcal{C}},\cdot,\Delta_{\shuffle})$ denote the tensor Hopf algebra over planar rooted trees, which consists of ordered forests together with concatenation and deshuffle. We now construct an Hopf algebra isomorphism between $\mathcal{H}_{\scriptscriptstyle{\text{GL}}}$ and $T(\mathcal{PT}^{\mathcal{C}})$. In the recent work \cite{alkaabi2022algebraic} it was identified with Gavrilov's so-called K-map \cite{Gavrilov2006}. See also \cite{Foissy2018post}.    

\begin{proposition} \cite{alkaabi2022algebraic,Foissy2018post}
\label{Prop:HopfAlgebraMorphism}
The map $\varphi: \mathcal{H}_{\scriptscriptstyle{\text{GL}}} \to T(\mathcal{PT}^{\mathcal{C}})$, defined for $\tau \in \mathcal{PT}$ 
by $\varphi(\tau):=\tau$ and extended by $\varphi(\omega_1 \ast \omega_2):=\varphi(\omega_1) \cdot \varphi(\omega_2)$ is a Hopf algebra isomorphism.
\end{proposition}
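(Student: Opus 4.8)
The plan is to exploit that $\mathcal{H}_{\scriptscriptstyle{\text{GL}}}$ and $T(\mathcal{PT}^{\mathcal{C}})$ share the same underlying graded vector space $\mathcal{OF}^{\mathcal{C}}$ and \emph{the same} coproduct $\Delta_{\shuffle}$; only the products differ, namely $\ast$ versus concatenation $\cdot$. Accordingly I would reduce the whole statement to one algebraic fact: that $(\mathcal{OF}^{\mathcal{C}},\ast)$ is the free associative algebra on the single trees $\mathcal{PT}^{\mathcal{C}}$. Granting this, the set map $\tau\mapsto\tau$ extends uniquely to an algebra morphism $\varphi\colon(\mathcal{OF}^{\mathcal{C}},\ast)\to(\mathcal{OF}^{\mathcal{C}},\cdot)$, which is precisely the recursive prescription $\varphi(\omega_1\ast\omega_2)=\varphi(\omega_1)\cdot\varphi(\omega_2)$; in particular $\varphi$ is well defined. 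Since the target is manifestly free associative on the same set $\mathcal{PT}^{\mathcal{C}}$, the morphism $\varphi$ carries one free generating set bijectively onto the other and is therefore an algebra isomorphism. (Freeness of $(\mathcal{OF}^{\mathcal{C}},\ast)$ is consistent with Proposition~\ref{prop::FreeAssociative}; the extra input here is the identification of the trees themselves as free generators.)

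The main work is this freeness, which I would obtain by a triangularity argument. Using the inversion formula \eqref{GLprodinv2}, $\tau_1\cdot A=\tau_1\ast A-\tau_1\graft A$, and noting that $\tau_1\graft A$ grafts the root of the single tree $\tau_1$ into $A$ and hence produces forests with the \emph{same} number of connected components as $A$, whereas $\tau_1\cdot A$ has exactly one more, one sees that the Grossman--Larson product of a tree with a forest equals their concatenation up to terms with strictly fewer connected components. Filtering $\mathcal{OF}^{\mathcal{C}}$ within each (finite-dimensional) homogeneous degree by the number of connected components, an easy induction on $n$ then yields
\[
	\tau_1\ast\tau_2\ast\cdots\ast\tau_n=\tau_1\tau_2\cdots\tau_n+(\text{terms with fewer components}).
\]
Thus the transition from the concatenation monomials (which are exactly the ordered forests, a basis indexed by words in $\mathcal{PT}^{\mathcal{C}}$) to the $\ast$-monomials $\tau_1\ast\cdots\ast\tau_n$ is unitriangular, hence invertible on each graded piece. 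Consequently the $\ast$-monomials, indexed by words in $\mathcal{PT}^{\mathcal{C}}$, form a basis of $(\mathcal{OF}^{\mathcal{C}},\ast)$, which is the required freeness; moreover $\varphi$ sends this basis to the concatenation basis bijectively, so it is in particular a linear isomorphism.

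It remains to check that $\varphi$ respects the common coproduct, i.e.\ $\Delta_{\shuffle}\circ\varphi=(\varphi\otimes\varphi)\circ\Delta_{\shuffle}$. Here I would invoke freeness once more: both composites are algebra morphisms from $(\mathcal{OF}^{\mathcal{C}},\ast)$ to $(\mathcal{OF}^{\mathcal{C}}\otimes\mathcal{OF}^{\mathcal{C}},\,\cdot\otimes\cdot)$. Indeed $\Delta_{\shuffle}$ is multiplicative for $\cdot$ (as $T(\mathcal{PT}^{\mathcal{C}})$ is a bialgebra) and for $\ast$ (as $\mathcal{H}_{\scriptscriptstyle{\text{GL}}}$ is a bialgebra), and $\varphi$ intertwines $\ast$ with $\cdot$, so composing in either order produces such a morphism. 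Since the source is free on $\mathcal{PT}^{\mathcal{C}}$, it suffices to compare the two maps on a single tree $\tau$. Every single tree lies in $\mathfrak{g}=Lie(\mathcal{PT}^{\mathcal{C}})$ and is therefore primitive for $\Delta_{\shuffle}$, so both sides return $\tau\otimes 1+1\otimes\tau$; hence they agree everywhere and $\varphi$ is a bialgebra isomorphism. Being a bialgebra isomorphism between Hopf algebras, it automatically commutes with the antipodes, so $\varphi$ is a Hopf algebra isomorphism.

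The step I expect to be the main obstacle is the freeness in the second paragraph: one must set up the filtration by number of connected components correctly and verify that every grafting term in an iterated Grossman--Larson product strictly lowers the component count, so that the triangularity --- and with it both the well-definedness and bijectivity of $\varphi$ and the identification of $\mathcal{PT}^{\mathcal{C}}$ as free generators --- is airtight. Once that is in place, the coalgebra and antipode compatibilities are formal.
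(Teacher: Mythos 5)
Your proof is correct, and its computational core coincides with the paper's: both arguments rest on the same unitriangularity, extracted from the inversion formula \eqref{GLprodinv2}, between the concatenation basis of ordered forests and the Grossman--Larson monomials $\tau_1\ast\cdots\ast\tau_n$, graded by the number of connected components. The packaging, however, differs in two respects worth noting. First, you promote the triangularity to a freeness statement --- $(\mathcal{OF}^{\mathcal{C}},\ast)$ is free associative on $\mathcal{PT}^{\mathcal{C}}$ --- and then obtain well-definedness, injectivity and surjectivity of $\varphi$ in one stroke from the universal property; the paper instead defines $\varphi$ by the recursion $\varphi(\omega_1\ast\omega_2)=\varphi(\omega_1)\cdot\varphi(\omega_2)$ and checks injectivity and surjectivity by separate inductions, leaving well-definedness implicit (your route makes this point airtight, which is a genuine improvement, and it also makes explicit the link to Proposition \ref{prop::FreeAssociative}). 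Second, for compatibility with $\Delta_{\shuffle}$ the paper runs an explicit induction on the number of trees in a forest, whereas you observe that $\Delta_{\shuffle}\circ\varphi$ and $(\varphi\otimes\varphi)\circ\Delta_{\shuffle}$ are both algebra morphisms out of a free algebra and therefore need only be compared on the generators, where both return $\tau\otimes 1+1\otimes\tau$ since single trees are primitive on both sides; this is shorter and arguably more transparent. Your closing remark that a bialgebra morphism between Hopf algebras automatically intertwines the antipodes disposes of a point the paper does not address explicitly. In short: same key lemma, cleaner superstructure.
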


\begin{proof}
For the reader's benefit, we provide a detailed argument by checking all the properties:
\begin{enumerate}
\item $\varphi$ is injective: Following \eqref{GLprodinv2}, any forest $\tau_1 \cdots \tau_n$ can be written as a difference between a Grossman--Larson product $\ast$ and a linear combination of forests consisting of fewer trees.
\begin{align*}
	\tau_1 \cdots \tau_n 
	&=\tau_1 * (\tau_2 \cdots \tau_{n}) - \sum_{j=2}^n\tau_2 \cdots (\tau_1 \graft \tau_j) \cdots \tau_{n}.
\end{align*}
Applying this rewriting of forests in terms of Grossman--Larson products inductively, we see that we can express any forest as a Grossman--Larson polynomial, where the unique term with $n-1$ products is given by the product of the trees in the forest:
\begin{align*}
	\tau_1 \cdots \tau_n 
	&= \tau_1 \ast \cdots \ast \tau_n - \tau'_1 \ast \cdots \ast \tau'_{n-1} + \cdots \pm \tau''.
\end{align*}
Hence we have that:
\begin{align*}
	\varphi(\tau_1 \cdots \tau_n)
	=\tau_1 \cdots \tau_n + \text{ shorter forests.}
\end{align*}
In particular, no forest maps to zero. Hence the kernel of $\varphi$ is trivial, and the map must be injective.
\item $\varphi$ is surjective: $\varphi$ is surjective on trees by definition. Let $\tau_1 \cdots \tau_n$ be an arbitrary forest of $n$ trees, then:
\begin{align*}
\tau_1 \cdots \tau_n 
	&= \varphi(\tau_1) \cdot (\tau_2 \cdots \tau_n) \\
	&=\varphi(\tau_1 \ast \varphi^{-1}(\tau_2 \cdots \tau_n)   ),
\end{align*}
and the property follows by induction over the number of trees.
\item $\varphi$ is an algebra morphism: This follows by definition.
\item $\varphi$ is a coalgebra morphism: It is clear that
\begin{align*}
	(\varphi \otimes \varphi)\Delta_{\shuffle}(\tau)
	=\Delta_{\shuffle}(\varphi(\tau)),
\end{align*}
whenever $\tau$ is a tree. Suppose the property holds for all forests of at most $n-1$ trees, then:
\begin{align*}
	\Delta_{\shuffle}(\varphi(\tau_1 \cdots \tau_n  )  )
	&= \Delta_{\shuffle}(\varphi(\tau_1 \ast \cdots \ast \tau_n - \omega'  )  ),
\end{align*}
where $\omega'$ is a shorter forest. Hence:
\begin{align*}
	\Delta_{\shuffle}(\varphi(\tau_1 \cdots \tau_n  )  )
	&= \Delta_{\shuffle}(\tau_1 \cdots \tau_n) - (\varphi \otimes \varphi)\Delta_{\shuffle}(\omega')\\
	&=\Delta_{\shuffle}(\tau_1) \cdots \Delta_{\shuffle}(\tau_n) - (\varphi \otimes \varphi)\Delta_{\shuffle}(\omega')\\
	&=(\varphi \otimes \varphi)(\Delta_{\shuffle}(\tau_1) \ast \cdots \ast \Delta_{\shuffle}(\tau_n) -\Delta_{\shuffle}(\omega') )\\
	&=(\varphi \otimes \varphi)\Delta_{\shuffle}( \tau_1 \ast \cdots \ast \tau_n - \omega' )\\
	&=(\varphi \otimes \varphi)\Delta_{\shuffle}( \tau_1 \cdots \tau_n),
\end{align*}
and the property follows by mathematical induction.
\end{enumerate}
\end{proof}

We conclude this section by remarking that all of Foissy's results presented in Section \ref{sect:Foissy} now also apply to the MKW Hopf algebra, such as the classification of finite-dimensional comodules and the classification of bialgebra endomorphisms.


\section{Rough paths in a combinatorial Hopf algebras}
\label{sec:roughpaths}

The BCK Hopf algebra has played an important role in the study of rough differential equations on Eucliean spaces, in that it describes Gubinelli's \cite{Gubinelli2003} branched rough paths. This was generalized to planarly branched rough paths in \cite{CurryEbrahimiFardManchonMuntheKaas2018}, for the study of rough differential equations on homogeneous spaces. In this generalization, the role of the BCK Hopf algebra is now instead played by the MKW Hopf algebra. Hence, in our aim to put the MKW Hopf algebra on the same level as the BCK Hopf algebra, we want to generalize branched rough path results to planarly branched rough path results. One such result, by Boedihardjo and Chevyrev \cite{BoedihardjoChevyrev2019}, is that branched rough paths can be seen as geometric rough paths over a larger index set. This follows from Foissy's isomorphism between the BCK Hopf algebra and a shuffle Hopf algebra. Our aim is now to use the Hopf algebra isomorphism, $\varphi: \mathcal{H}_{\scriptscriptstyle{\text{GL}}} \to T(\mathcal{PT}^{\mathcal{C}})$, from the previous section to show how planarly branched rough paths can be seen as geometric rough paths. In order to address this topic, we first recall some basic rough path theory. We furthermore note some interesting consequences that arise when one considers the cointeraction $\rho_{\graft}$ from Section \ref{sec:GOconstruction} in this setting. We consider rough paths in the framework of combinatorial Hopf algebras as presented in \cite{CurryEbrahimiFardManchonMuntheKaas2018}\footnote{Generally speaking, there is no consensus on what a good definition of combinatorial Hopf algebra should be. Regarding our aim, we shall be using the following one.}. See also \cite{TZ2020}.

\begin{definition}
\label{def:combHopfAlg}
A combinatorial Hopf algebra $(\mathcal{H},\odot,\Delta,\eta,\epsilon)$ consists of a graded connected Hopf algebra $\mathcal{H} = \oplus_{n=0}^{\infty}\mathcal{H}_n$ over a field $\mathbb{K}$ of characteristic zero, together with a basis $\mathcal{B}=\cup_{n \geq 0} \mathcal{B}_n$ of homogeneous elements, such that:
\begin{enumerate}
\item There exist two positive constants $B$ and $C$ such that the dimension of $\mathcal{H}_n$ is bounded by $BC^n$.
\item The structure constants $c_{x y}^{z}$ and $c_{z}^{x y}$ of the product respectively the coproduct, defined for all elements $x,y,z \in \mathcal{B}$ by
\begin{align*}
	x \odot y 
	&= \sum_{z \in \mathcal{B}} c_{x y}^{z} z, \\
	\Delta(z) 
	&= \sum_{x,y \in \mathcal{B}} c_{z}^{x y} x \otimes y,
\end{align*}
are non-negative integers.
\end{enumerate}
We furthermore say that $\mathcal{H}$ is non-degenerate if $\mathcal{B} \cap \text{Prim}(\mathcal{H})=\mathcal{B}_1$.
\end{definition}

\noindent Examples of combinatorial Hopf algebras include, for instance, the shuffle Hopf algebra, the Butcher--Connes--Kreimer Hopf algebra as well as the Munthe-Kaas--Wright Hopf algebra. 

In this work, by a rough path, we mean the following.

\begin{definition} 
\label{def::RoughPath}
Let $\mathcal{H}=\oplus_{n\geq 0} \mathcal{H}_n$ be a non-degenerate combinatorial Hopf algebra with basis $\mathcal{B}$ (in the sense of Definition \ref{def:combHopfAlg}) and unit $1$. For $\gamma \in (0,1]$, a $\gamma$-regular $\mathcal{H}$-rough path is a two-parameter family $\mathbb{X}=(\mathbb{X}_{st})_{s,t \in \mathbb{R}}$ of linear forms on $\mathcal{H}$ such that $\langle \mathbb{X}_{st},1\rangle =1$ and: 
\begin{enumerate}
\item For any $s,t \in \mathbb{R}$ and any $x,y \in \mathcal{H}$, the following identity holds
\begin{align*}
	\langle \mathbb{X}_{st},x \odot y \rangle 
	= \langle \mathbb{X}_{st},x \rangle\langle \mathbb{X}_{st}, y \rangle.
\end{align*}
\item For any $s,t,u \in \mathbb{R}$, Chen's identity holds
\allowdisplaybreaks
\begin{align*}
	\mathbb{X}_{su} \ast \mathbb{X}_{ut}=\mathbb{X}_{st},
\end{align*}
where $\ast$ is the convolution product for linear forms on $\mathcal{H}$, defined in terms of the coproduct on $\mathcal{H}$.
\item For any $n \geq 0$ and any $x \in \mathcal{B}_n$, we have estimates
\begin{align*}
	\sup_{s \neq t} \frac{|\langle \mathbb{X}_{st},x \rangle |}{|t-s|^{\gamma |x|}} < \infty,
\end{align*}
where $|x|=n$ denotes the degree of the element $x \in \mathcal{B}_n$.
\end{enumerate}
\end{definition}

A geometric rough path lives over the tensor Hopf algebra with shuffle product, whereas a branched rough path is defined over the BCK Hopf algebra. We refer the reader to \cite{FH2020,FV2010,HairerKelly2012} for details on the theory of rough paths. Planarly branched rough paths \cite{CurryEbrahimiFardManchonMuntheKaas2018} are defined over the MKW Hopf algebra. 

We shall also need the notion of a truncated rough path.

\begin{definition} \label{def::TruncatedRoughPath}
Let $\mathcal{H}^{(N)}=\oplus_{0 \leq n \leq N} \mathcal{H}_n$ and let $\gamma \in (0,1]$. A $\gamma$-regular $N$-truncated $\mathcal{H}$-rough path is a two-parameter family $\mathbb{X}=(\mathbb{X}_{st})_{s,t \in \mathbb{R}}$ of linear forms on $\mathcal{H}^{(N)}$ such that $\langle \mathbb{X}_{st},1\rangle =1$ and: 
\begin{enumerate}
\item For any $s,t \in [0,1]$ and any $x,y \in \mathcal{H}$, with $|x|+|y| \leq N$, the following identity holds
\begin{align*}
	\langle \mathbb{X}_{st},x \odot y \rangle 
	= \langle \mathbb{X}_{st},x \rangle\langle \mathbb{X}_{st}, y \rangle.
\end{align*}
\item For any $s,t,u \in [0,1]$, Chen's identity holds
\allowdisplaybreaks
\begin{align*}
	\mathbb{X}_{su} \ast \mathbb{X}_{ut}=\mathbb{X}_{st},
\end{align*}
where $\ast$ is the convolution product restricted to $\mathcal{H}^{(N)}$.
\item For any $n \geq 0$ and any element $x \in \mathcal{B}_n$ of degree $|x|=n \leq N$, we have estimates
\begin{align*}
	\sup_{s \neq t} \frac{|\langle \mathbb{X}_{st},x \rangle |}{|t-s|^{\gamma |x|}} < \infty.
\end{align*}
\end{enumerate}
\end{definition}

\begin{theorem}[\cite{CurryEbrahimiFardManchonMuntheKaas2018}]
Let $\gamma \in (0,1]$ and let $N= \lfloor \frac{1}{\gamma} \rfloor$. Any $\gamma$-regular $N$-truncated $\mathcal{H}$-rough path admits a unique extension to a $\gamma$-regular $\mathcal{H}$-rough path.
\end{theorem}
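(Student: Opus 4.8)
The plan is to construct the extension one degree at a time by induction, performing an abstract-integration (sewing) step at each level. Assume inductively that $\mathbb{X}$ has been defined on $\mathcal{H}^{(n-1)}$ for some $n > N$ so that the three axioms of Definition \ref{def::RoughPath} hold up to degree $n-1$; the base case $n-1=N$ is the given truncated rough path. The decisive numerical fact is that, since $N=\lfloor 1/\gamma\rfloor$, every $n>N$ satisfies $\gamma n>1$, which is exactly the exponent needed for sewing to converge. Writing $X^x_{st}:=\langle \mathbb{X}_{st},x\rangle$ and using $\langle\mathbb{X}_{st},1\rangle=1$, Chen's identity rearranges, for $x\in\mathcal{H}_n$, into
\begin{align*}
	X^x_{st} = X^x_{su}+X^x_{ut}+\omega^x_{sut},
	\qquad
	\omega^x_{sut} := \langle \mathbb{X}_{su}\otimes\mathbb{X}_{ut},\hat\Delta(x)\rangle,
\end{align*}
so the degree-$n$ component is pinned down, up to an additive two-parameter function, by the \emph{germ} $\omega^x$, which involves only components of degree $<n$ and is therefore already known.

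I would first dispose of the decomposable part. Write $\mathcal{H}_n=(\mathcal{M}^2)_n\oplus W_n$, where $(\mathcal{M}^2)_n$ is the span of products of lower-degree elements. On $(\mathcal{M}^2)_n$ the value of $\mathbb{X}_{st}$ is forced by axiom~1 together with the inductive data, and this assignment is consistent because $\mathbb{X}_{st}$ restricted to $\mathcal{H}^{(n-1)}$ is already an algebra character, hence extends unambiguously to the subalgebra generated by degrees $<n$. On a complement $W_n$ I define $\mathbb{X}_{st}$ by sewing. For $x\in W_n$ the germ $\omega^x_{sut}=\sum\langle\mathbb{X}_{su},x^{(1)}\rangle\langle\mathbb{X}_{ut},x^{(2)}\rangle$ obeys, via the inductive regularity estimates and $|u-s|,|t-u|\le|t-s|$, the bound $|\omega^x_{sut}|\lesssim|t-s|^{\gamma n}$ with $\gamma n>1$. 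Moreover $\omega^x$ is \emph{closed}, i.e. $\omega^x_{uvt}-\omega^x_{svt}+\omega^x_{sut}-\omega^x_{suv}=0$ for $s\le u\le v\le t$, which follows from coassociativity of $\hat\Delta$ combined with Chen's identity in degrees $<n$. The sewing lemma then produces a unique two-parameter function $X^x$ with $X^x_{st}-X^x_{su}-X^x_{ut}=\omega^x_{sut}$ and $|X^x_{st}|\lesssim|t-s|^{\gamma n}$; setting $\langle\mathbb{X}_{st},x\rangle:=X^x_{st}$ gives exactly Chen's identity in degree $n$ on $W_n$ and establishes axiom~3.

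It remains to verify that the assembled functional on $\mathcal{H}_n$ is genuinely a character and satisfies Chen on all of $\mathcal{H}_n$, not merely on $W_n$. Multiplicativity up to degree $n$ holds by construction on $(\mathcal{M}^2)_n$. For Chen on a decomposable $x=y\odot z$ with $|y|+|z|=n$, I would use the bialgebra axiom $\Delta(y\odot z)=\Delta(y)\Delta(z)$ and the fact that $\mathbb{X}_{su},\mathbb{X}_{ut}$ are characters: this reduces Chen for $y\odot z$ to Chen for $y$ and $z$, which hold by induction. Hence axioms~1--3 hold in degree $n$, closing the construction. Uniqueness runs on the same induction: two $\gamma$-regular extensions agree up to degree $n-1$, are both forced to agree on $(\mathcal{M}^2)_n$, and on $W_n$ both solve the same coboundary equation with germ $\omega^x$ and exponent $\gamma n>1$, so they coincide by the uniqueness clause of the sewing lemma.

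The main obstacle is the closedness identity $\delta\omega^x=0$ that licenses sewing: it is the algebraic heart of the argument and must be extracted from coassociativity of the reduced coproduct together with Chen's identity at lower degrees, keeping careful track of the Sweedler components. Once closedness and the estimate $|\omega^x_{sut}|\lesssim|t-s|^{\gamma n}$ with $\gamma n>1$ (guaranteed precisely by $N=\lfloor 1/\gamma\rfloor$) are in hand, the convergence of the Riemann-type sums is routine, and the compatibility with the multiplicative structure is a formal consequence of the bialgebra axioms; the non-degeneracy hypothesis of Definition \ref{def:combHopfAlg} is what keeps the degree-wise regularity bookkeeping consistent.
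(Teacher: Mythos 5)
The paper itself offers no proof of this theorem --- it is quoted from \cite{CurryEbrahimiFardManchonMuntheKaas2018} --- and your argument is precisely the standard Lyons-extension/sewing proof used there: induct on the degree $n>N$, observe that $\hat\Delta(x)\in\bigoplus_{k=1}^{n-1}\mathcal{H}_k\otimes\mathcal{H}_{n-k}$ so the germ $\omega^x_{sut}=\langle\mathbb{X}_{su}\otimes\mathbb{X}_{ut},\hat\Delta(x)\rangle$ is known, check that it is closed and of order $|t-s|^{\gamma n}$ with $\gamma n>1$, and sew. Your closedness claim is correct and follows exactly as you say: writing $\hat\Delta=\Delta-(\cdot\otimes 1)-(1\otimes\cdot)$, coassociativity gives $(\Delta\otimes\mathrm{id})\hat\Delta-(\mathrm{id}\otimes\Delta)\hat\Delta=1\otimes\hat\Delta-\hat\Delta\otimes 1$, which paired against $\mathbb{X}_{su}\otimes\mathbb{X}_{uv}\otimes\mathbb{X}_{vt}$ (using Chen below degree $n$ and $\langle\mathbb{X},1\rangle=1$) is the cocycle identity $\delta\omega^x=0$. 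The uniqueness argument (an additive increment bounded by $|t-s|^{\gamma n}$ with $\gamma n>1$ vanishes) and the reduction of Chen on decomposables to lower degree via $\Delta(y\odot z)=\Delta(y)\Delta(z)$ are also correct.

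The one place where you assert rather than prove is the consistency of the multiplicative extension on $(\mathcal{M}^2)_n$: you need that if $\sum_i a_i\odot b_i=\sum_j c_j\odot d_j$ in $\mathcal{H}_n$ with all factors of positive degree $<n$, then $\sum_i\langle\mathbb{X}_{st},a_i\rangle\langle\mathbb{X}_{st},b_i\rangle=\sum_j\langle\mathbb{X}_{st},c_j\rangle\langle\mathbb{X}_{st},d_j\rangle$. Saying that a character of the truncation ``extends unambiguously to the subalgebra generated by degrees $<n$'' is the conclusion, not an argument; the relations defining that subalgebra in degree $n$ are not visible below degree $n$. The standard repair is the surjectivity of the truncation maps on character groups of a graded connected Hopf algebra: take $\psi=\log^{*}\mathbb{X}_{st}$ on degrees $\le n-1$ (an infinitesimal character there, computed degree by degree), extend $\psi$ by zero to degree $n$, and set $\Phi=\exp^{*}\psi$; then $\Phi$ is a genuine character of $\mathcal{H}^{(n)}$ agreeing with $\mathbb{X}_{st}$ below degree $n$, and its restriction to $(\mathcal{M}^2)_n$ is the well-defined forced value. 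With that lemma inserted, your induction closes and the proof is complete.
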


\begin{definition}
\label{def::Signature}
Let $\mathbb{X}: [0,1]^{\otimes 2} \otimes \mathcal{H}^{(N)} \to \mathbb{R}$ be a $\gamma$-regular $N$-truncated $\mathcal{H}$-rough path and let $\tilde{\mathbb{X}}_{st}$ denote its unique extension. The signature of $\mathbb{X}$ is defined as:
\begin{align*}
	\mathrm{Sig}(\mathbb{X})=\tilde{\mathbb{X}}_{01}.
\end{align*}
\end{definition}

We shall also need the notion of translations of rough paths, from \cite{Rahm2021RP}. Recall that the definition of a combinatorial Hopf algebra comes with a choice of a basis $\mathcal{B}$. This basis will also play a role with respect to the concept of translations of rough paths. Let $(\mathcal{H},\odot,\Delta,\eta,\epsilon)$ be a non-degenerate combinatorial Hopf algebra with basis $\mathcal{B}$ and $\mathcal{B}_1=\{e_0,\ldots,e_{m-1}\}$. Let $(\mathcal{H}^{\ast},\ast,\Delta_{\odot})$ denote the graded dual space, with convolution product $\ast$ dual to $\Delta$ and coproduct $\Delta_{\odot}$ dual to the product $\odot$. Let $(\overline{\mathcal{H}^{\ast}},\ast,\Delta_{\odot})$ be the completion of $\mathcal{H}^{\ast}$ with respect to the grading, together with the continuously extended product $\ast$, and $\Delta_{\odot}$ extended continuously to a map $\overline{\mathcal{H}^{\ast}} \to \mathcal{H}^{\ast}\overline{\otimes} \mathcal{H}^{\ast}$. 

A translation is a family of maps $T_\mathbf{v}$, parametrised by the $m$ primitive elements $\mathbf{v}=(v_0,\ldots,v_{m-1} ) \in P(\overline{\mathcal{H}^{\ast}})^{\times m}$ satisfying the following.

\begin{definition} 
\label{def::Translation}
A family of algebra morphisms $T_{\bf{v}}: \overline{\mathcal{H}^{\ast}} \to \overline{\mathcal{H}^{\ast}}$ is a translation if
\begin{enumerate}
\item $T_{\bf{v}}(e_i)=e_i+v_i$ for every $e_i \in \mathcal{B}_1$ and some ${\bf{v}}=(v_0,\ldots,v_{m-1})$, $v_i \in \overline{\mathcal{H}^{\ast}}$ primitive.

\item $T_{\bf{v}} \circ T_{\bf{u}} = T_{\bf{v} + T_{\bf{v}}({\bf{u}})}$, where $T_{\bf{v}}({\bf{u}})=(T_{\bf{v}}(u_0), \ldots, T_{\bf{v}}(u_{m-1}))$.

\item For each $\mathcal{H}$-rough path $\mathbb{X}_{st}$, the pointwise translation $T_{\bf{v}}(\mathbb{X}_{st})$ is a $\mathcal{H}$-rough path:
\begin{enumerate}

\item $T_{\bf{v}}$ maps characters to characters.

\item $T_{\bf{v}}$ is a morphism with respect to the convolution product of $\overline{\mathcal{H}^*}$.

\item The bound 
\begin{align*}
	\sup_{s \neq t} \frac{|\langle T_{\bf{v}}(\mathbb{X}_{st}),x \rangle |}{|t-s|^{\gamma |x|}} < \infty
\end{align*}
holds.
\end{enumerate}

\item \label{item:coaction} There exists a coaction $\rho_T : \mathcal{H} \to \mathcal{S}(P(\mathcal{H}^{\ast}) \times \mathcal{B}_1)\otimes \mathcal{H}$ such that $\langle T_{\bf{v}}(\chi),x\rangle = \langle e^{\bf{v}} \otimes \chi,\rho_T(x)\rangle$, where $e^{\mathbf{v}}=\exp( \sum_i (v_i,e_i) )$ in the free commutative co-commutative Hopf algebra $\mathcal{S}(P(\mathcal{H}^{\ast}) \times \mathcal{B}_1)$.
\end{enumerate}
\end{definition}

Denote the free commutative co-commutative Hopf algebra by $(\mathcal{S}(P(\mathcal{H}^{\ast}) \times \mathcal{B}_1),\centerdot,\Delta_{\centerdot})$. Then we have the following result.

\begin{theorem}[\cite{Rahm2021RP}]
Let $T_{\mathbf{v}}: \overline{\mathcal{H}^{\ast}} \to \overline{\mathcal{H}^*}$ and $\rho_T : \mathcal{H} \to \mathcal{S}(P(\mathcal{H}^{\ast}) \times \mathcal{B}_1)\otimes \mathcal{H}$ be related by:
\begin{align*}
\langle T_{\bf{v}}(\chi),x\rangle 
= \langle e^{\bf{v}} \otimes \chi,\rho_T(x)\rangle. 
\end{align*}
Then $T_{\mathbf{v}}$ is a translation if and only if the following holds:
\begin{enumerate}
\item $\rho_T$ is a cointeraction.
\item The following two identities hold:
\begin{align}
	( \mathrm{id} \otimes \rho_T)\rho_T 
	&= m^{1,2}(( \mathrm{id} \otimes \rho_T \otimes \mathrm{id})(\Delta_{\centerdot} \otimes \mathrm{id})\rho_T  ),\label{eq::cotranslation1} \\
	\langle v_i + e_i,x\rangle
	&=\langle e^{\mathbf{v}} \otimes e_i,\rho_T(x) \rangle, \label{eq::cotranslation2}
\end{align}
where $\rho_T$ is extended to $\mathcal{S}(P(\mathcal{H}^{\ast})\times \mathcal{B}_1)$ by letting it act on the first component.
\end{enumerate}
\end{theorem}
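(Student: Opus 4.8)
The plan is to prove the equivalence by duality, translating each of the four defining properties of a translation in Definition \ref{def::Translation} into a corresponding statement about the coaction $\rho_T$ through the defining pairing $\langle T_{\mathbf{v}}(\chi),x\rangle = \langle e^{\mathbf{v}} \otimes \chi, \rho_T(x)\rangle$. Throughout I would exploit three structural facts: that the group-like element $e^{\mathbf{v}} \in \mathcal{S}(P(\mathcal{H}^{\ast}) \times \mathcal{B}_1)$ is an algebra character of the symmetric algebra, that $\chi$ is a character of $\mathcal{H}$, and the separation lemma stating that if $\langle \chi,x\rangle = \langle \chi,y\rangle$ for every character $\chi$ then $x=y$. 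The last of these lets me pass from identities paired against all $e^{\mathbf{v}} \otimes \chi$ back to genuine identities in $\mathcal{S}(P(\mathcal{H}^{\ast}) \times \mathcal{B}_1) \otimes \mathcal{H}$.

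First I would recover the cointeraction. Writing $\rho_T(x) = x_{[1]} \otimes x_{[2]}$ in Sweedler notation and using that both $e^{\mathbf{v}}$ and $\chi$ are characters, the requirement that $T_{\mathbf{v}}$ send characters to characters (item 3(a)) unwinds, after factoring the pairing, into $\rho_T(x \odot y) = \rho_T(x)(\centerdot \otimes \odot)\rho_T(y)$, which is the multiplicativity axiom of a cointeraction. Similarly, expanding $\langle T_{\mathbf{v}}(\chi \ast \psi),x\rangle$ and $\langle T_{\mathbf{v}}(\chi) \ast T_{\mathbf{v}}(\psi),x\rangle$ through the coproduct $\Delta$ of $\mathcal{H}$ shows that the convolution-morphism property (item 3(b)) is equivalent to $(\mathrm{id} \otimes \Delta)\rho_T = m^{1,3}(\rho_T \otimes \rho_T)\Delta$, the remaining cointeraction axiom; the unit and counit axioms fall out of $\langle \chi,1\rangle = 1$ and a degree count. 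The normalisation $T_{\mathbf{v}}(e_i) = e_i + v_i$ (item 1), read against an arbitrary $x$ through the extended pairing, is literally the identity \eqref{eq::cotranslation2}.

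The core of the argument is the composition law $T_{\mathbf{v}} \circ T_{\mathbf{u}} = T_{\mathbf{v} + T_{\mathbf{v}}(\mathbf{u})}$ (item 2), which I expect to be the main obstacle. Pairing the left-hand side against $x$ and using the defining formula twice gives $\langle e^{\mathbf{v}} \otimes e^{\mathbf{u}} \otimes \chi, (\mathrm{id} \otimes \rho_T)\rho_T(x)\rangle$, producing the left-hand side of \eqref{eq::cotranslation1}. For the right-hand side I would expand $e^{\mathbf{v} + T_{\mathbf{v}}(\mathbf{u})}$; since the generators $(w_i,e_i)$ commute in the symmetric algebra it factors as $e^{\mathbf{v}} \centerdot e^{T_{\mathbf{v}}(\mathbf{u})}$. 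The factor $e^{T_{\mathbf{v}}(\mathbf{u})}$ is exactly what is captured by applying $\rho_T$ to the first, $P(\mathcal{H}^{\ast})$, component -- this is the meaning of the extension of $\rho_T$ to $\mathcal{S}(P(\mathcal{H}^{\ast}) \times \mathcal{B}_1)$ in the statement -- while the two appearances of $e^{\mathbf{v}}$, one translating $\mathbf{u}$ and one entering the base, are supplied by the co-commutative coproduct $\Delta_{\centerdot}$ and then recombined by the product $m^{1,2}$ in $\mathcal{S}$. Matching the two expressions term by term yields precisely $m^{1,2}((\mathrm{id} \otimes \rho_T \otimes \mathrm{id})(\Delta_{\centerdot} \otimes \mathrm{id})\rho_T)$, the right-hand side of \eqref{eq::cotranslation1}. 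The delicate points are the bookkeeping of which copy of $e^{\mathbf{v}}$ translates $\mathbf{u}$ versus which enters the base, and the justification of the exponential factorisation; once these are settled, the separation lemma converts the paired identity into \eqref{eq::cotranslation1}.

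Finally, I would observe that the analytic estimate (item 3(c)) and the existence clause (item 4) are not genuinely separate constraints in this correspondence: the existence of $\rho_T$ is built into the hypothesis relating $T_{\mathbf{v}}$ and $\rho_T$, and the homogeneity bound is preserved because $\rho_T$ is a filtered map whose structure constants are fixed independently of $\mathbf{v}$, so the translated family $T_{\mathbf{v}}(\mathbb{X}_{st})$ inherits the required $|t-s|^{\gamma|x|}$ control. Assembling the equivalences established in the previous steps then gives the theorem in both directions.
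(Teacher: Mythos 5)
The paper does not actually prove this theorem: it is quoted verbatim from \cite{Rahm2021RP} and used as a black box, so there is no in-paper proof to compare your argument against. Judged on its own terms, your duality dictionary is the natural strategy and the algebraic half of it is essentially right: item 3(a) of Definition \ref{def::Translation} dualises to multiplicativity of $\rho_T$, item 3(b) to the compatibility $(\mathrm{id}\otimes\Delta)\rho_T=m^{1,3}(\rho_T\otimes\rho_T)\Delta$, item 1 is literally \eqref{eq::cotranslation2}, and your bookkeeping for item 2 --- splitting $e^{\mathbf{v}+T_{\mathbf{v}}(\mathbf{u})}=e^{\mathbf{v}}\centerdot e^{T_{\mathbf{v}}(\mathbf{u})}$, capturing $e^{T_{\mathbf{v}}(\mathbf{u})}$ by the extension of $\rho_T$ to the first tensor leg, and recombining the two copies of $e^{\mathbf{v}}$ via $\Delta_{\centerdot}$ and $m^{1,2}$ --- is exactly how \eqref{eq::cotranslation1} arises. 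You should, however, say a word about why the specific family of group-like elements $e^{\mathbf{v}}$ (rather than all characters of $\mathcal{S}(P(\mathcal{H}^{\ast})\times\mathcal{B}_1)$) suffices to separate points when you invoke the separation lemma to upgrade paired identities to identities in $\mathcal{S}(P(\mathcal{H}^{\ast})\times\mathcal{B}_1)\otimes\mathcal{H}$.

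The genuine gap is your dismissal of the analytic estimate, item 3(c), in the ``if'' direction. Writing $\rho_T(x)=\sum s\otimes y$, one gets $\langle T_{\mathbf{v}}(\mathbb{X}_{st}),x\rangle=\sum\langle e^{\mathbf{v}},s\rangle\,\langle\mathbb{X}_{st},y\rangle$, and the rough path axioms only give $|\langle\mathbb{X}_{st},y\rangle|\lesssim|t-s|^{\gamma|y|}$. Since $\rho_T$ is described by \emph{contracting} admissible subtrees, the right-hand factors $y$ generically have degree $|y|<|x|$, so termwise you only obtain the weaker bound $|t-s|^{\gamma|y|}$, which does not imply the required $\sup_{s\neq t}|\langle T_{\mathbf{v}}(\mathbb{X}_{st}),x\rangle|/|t-s|^{\gamma|x|}<\infty$. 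Your justification --- that $\rho_T$ is a filtered map with $\mathbf{v}$-independent structure constants --- does not address this degree deficit. Closing it requires an actual homogeneity argument (tracking how the degree of the contracted subtree, recorded in the left leg $s$ and paired against the primitive $v_i$, compensates the drop in $|y|$, or restricting the admissible $v_i$ accordingly), and this is precisely the part of the theorem that cannot be read off from the purely algebraic identities \eqref{eq::cotranslation1}--\eqref{eq::cotranslation2} without further input. Until that is supplied, your proof establishes the equivalence of the algebraic axioms with the stated conditions but not that the translated family is again a $\gamma$-regular rough path.
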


The above definition appeared in the context of solving differential equations using rough paths. There, the set $\mathcal{B}_1$ corresponds to the vector fields in the differential equation. Hence, the characters over $\mathcal{S}(P(\mathcal{H}^{\ast})\times \mathcal{B}_1)$ are in bijection with all possible directions to translate the vector fields in. The main results from \cite{BrunedChevyrevFrizPreiss} and \cite{Rahm2021RP} say then that translating vector fields before solving the equation is equivalent to translating the solution of the unmodified equation. We will now show that the coaction $\rho_{\graft}$ defined in Section \ref{sec:GOconstruction} satisfies all of these axioms, but relative to a different cointeracting Hopf algebra. In particular, the interpretation in terms of translating vector fields is not applicable.\\

Consider the MKW algebra, a concrete translation map was constructed in \cite{Rahm2021RP}. One starts by defining $T_{\mathbf{v}}(e_i)=e_i+v_i$, then extending this to all of $Lie(\mathcal{PT}^{\mathcal{C}})$ as a post-Lie morphism. Finally one extends the translation map to all ordered forests as a morphism for the planar Grossman--Larson product $\ast$ defined earlier. One can then show that this map is also a morphism for the concatenation product. Hence, it is a translation map for both $\mathcal{H}_{\scriptscriptstyle{\text{MKW}}}$ and $T(\mathcal{PT}^{\mathcal{C}})^{\ast}$. For the rest of this section, the notation $T_{\mathbf{v}}$ refers to this concrete translation map, which we shall call post-Lie translation of planarly branched rough paths. The dual map, $\rho_T$, can be described by contracting admissible subtrees. See \cite{Rahm2021RP} for details.\\

The elements in $\mathcal{B}_1$ should be thought of as vector fields in some differential equation, then there is a bijection between the characters of $\mathcal{S}(P(\mathcal{H}^{\ast})\times \mathcal{B}_1)$ and the set of possible directions to translate these vector fields in, where translation is understood in the way of $T_{\mathbf{v}}(e_i)=e_i+v_i$. Then each character of $\mathcal{S}(Lie(\mathcal{PT}^{\mathcal{C}})\times \mathcal{B}_1)$ gives an automorphism on $\mathcal{G}_{\odot}$, such that the dual map satisfies identity \eqref{eq::cotranslation1}. Similarly, the coaction $\rho_{\graft}$ assigns to each character over $T(\mathcal{PT}^{\mathcal{C}})$, an automorphism on $\mathcal{G}_{\odot}$, where again identity \eqref{eq::CoTranslation} is satisfied. Note that \eqref{eq::CoTranslation} and \eqref{eq::cotranslation1} differ only by the specific product and coproduct in their respective Hopf algebras. Hence the $T_{\mathbf{v}}$'s generate a subgroup of the automorphism group, and so does $\rho_{\graft}$. One can now ask to what extent these subgroups overlap. The following proposition states that these subgroups intersect only in the identity operator.

\begin{proposition} \label{prop::DisjointCointeractions}
Let $e^{\mathbf{v}}$ be a character over $\mathcal{S}(Lie(\mathcal{PT}^{\mathcal{C}}) \times \mathcal{C}  )$ and let $\xi$ be a character over $T(\mathcal{PT}^{\mathcal{C}})$. Suppose that
\begin{align*}
	\langle e^{\mathbf{v}}\otimes A,\rho_T(x) \rangle
	=\langle \xi \otimes A,\rho_{\graft}(x)\rangle
\end{align*}
for all $A \in \mathcal{G}_{\odot}$ and all $x \in \mathcal{OF}^{\mathcal{C}}$. Then $e^{\mathbf{v}}=\xi=1$.
\end{proposition}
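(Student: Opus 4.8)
The plan is to turn the hypothesis into a pointwise identity between two maps on the group $\mathcal{G}_{\odot}$, and then read off a contradiction from how grafting acts on a single degree-two tree. Since the elements $A\in\mathcal{G}_{\odot}$ are precisely the characters, the defining relations of $\rho_T$ and $\rho_{\graft}$ turn the assumption into $\langle T_{\mathbf{v}}(A),x\rangle=\langle \xi\graft A,x\rangle$ for all $x\in\mathcal{OF}^{\mathcal{C}}$, that is,
$$
	T_{\mathbf{v}}(A)=\xi\graft A\qquad\text{for every }A\in\mathcal{G}_{\odot}.
$$
Both sides are morphisms for the concatenation product ($T_{\mathbf{v}}$ by construction, $\xi\graft(\cdot)$ by \eqref{eq::GraftIsDistributive}), and both send primitive elements to primitive elements; for $\xi\graft(\cdot)$ this follows from $\Delta_{\shuffle}(\xi\graft\ell)=(\xi\graft\ell)\otimes 1+1\otimes(\xi\graft\ell)$, using $\xi\graft 1=1$. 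Writing group-like elements as concatenation-exponentials of Lie series, the displayed identity is hence equivalent to $T_{\mathbf{v}}(\ell)=\xi\graft\ell$ on primitives $\ell$, and it suffices to test it on the single-vertex generators $e_i$ and on the degree-two trees $e_j\graft e_i$.

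First I would evaluate on a single vertex. Grafting a forest onto a one-vertex tree merely makes it the child-forest of the root, so $\xi\graft e_i=B_+^{i}(\xi)$, while $T_{\mathbf{v}}(e_i)=e_i+v_i$; comparing gives $v_i=B_+^{i}(\xi)-e_i$. Suppose, for contradiction, that $\xi\neq 1$, and let $N\geq 1$ be minimal with $\xi_N\neq 0$, where $\xi=1+\sum_{n\geq 1}\xi_n$ is the homogeneous decomposition. Since $B_+^{i}$ is injective and raises the degree by one, $v_i$ has lowest nonzero part in degree $N+1$, namely $(v_i)_{N+1}=B_+^{i}(\xi_N)$.

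The decisive step is to evaluate on the tree $e_j\graft e_i$, where the two maps differ because $T_{\mathbf{v}}$ respects $\graft$ whereas $\xi\graft(\cdot)$ a priori does not. On one side the post-Lie morphism property gives $T_{\mathbf{v}}(e_j\graft e_i)=(e_j+v_j)\graft(e_i+v_i)$. On the other side the shift identity \eqref{shift} gives $\xi\graft(e_j\graft e_i)=(\xi\ast e_j)\graft e_i$, with $\xi\ast e_j=\xi\cdot B_+^{j}(\xi)$ because $\xi$ is group-like. Both sides share the leading term $e_j\graft e_i$ and their lowest corrections sit in degree $N+2$; subtracting the common contribution $B_+^{j}(\xi_N)\graft e_i$ leaves
$$
	e_j\graft B_+^{i}(\xi_N)=(\xi_N\, e_j)\graft e_i.
$$
The right-hand side is the single planar tree $B_+^{i}(\xi_N\, e_j)$, in which $j$ is a leaf adjacent to the root, whereas the left-hand side also contains the terms in which $j$ is grafted onto an internal vertex of $\xi_N$, hence at distance at least two from the root. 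Distinct planar trees are linearly independent and cannot cancel, and $\xi_N\neq 0$ has at least one vertex, so this identity is impossible. Therefore $\xi_N=0$, contradicting the minimality of $N$; hence $\xi=1$, and substituting back yields $v_i=B_+^{i}(1)-e_i=0$ for all $i$, so that $e^{\mathbf{v}}=1$.

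I expect the comparison in the last paragraph to be the crux. The Lie-bracket structure alone gives no leverage: $\xi\graft(\cdot)$ automatically respects $[\cdot,\cdot]$ because it distributes over concatenation, so any agreement forced on the Lie subalgebra generated by the $e_i$ is vacuous. All the information resides in the grafting product, which generates the free post-Lie algebra beyond that Lie subalgebra; isolating a genuinely grafted tree such as $e_j\graft e_i$ and extracting the lowest-degree discrepancy is exactly what converts the slogan that grafting by a group-like element is not a $\graft$-morphism unless it is trivial into the required rigidity.
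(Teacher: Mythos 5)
Your proof is correct and follows essentially the same route as the paper's: dualize the hypothesis to $T_{\mathbf{v}}(A)=\xi\graft A$ on characters, evaluate on a single vertex to get $v_i=B_+^{i}(\xi)-e_i$, then evaluate on the two-vertex tree $e_j\graft e_i$ and conclude that the resulting tree identity forces $\xi=1$. Your extraction of the lowest-degree part is a mild refinement of the paper's direct comparison of $\xi\graft(e_j\graft e_i)$ with $(e_j+v_j)\graft(e_i+v_i)$, and the closing step is asserted at the same level of detail in both arguments (a fully airtight version would still have to rule out cancellations among the terms where $e_j$ is grafted onto internal vertices of $\xi_N$, e.g.\ via injectivity of $e_j\graft(\cdot)$ or by filtering on the number of trees in a forest).
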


\begin{proof}
Note that $T_{\mathbf{v}}(\Forest{[i]})=\Forest{[i]}+v_i$ and that $\xi \graft \Forest{[i]}= \Forest{[i[\xi]]}$, where a vertex is decorated by $\xi$ to represent that $\xi$ is grafted there. Then, since $T_{\mathbf{v}}$ and $\xi \graft$ are equal, we must have that $v_i=B_+^{i}(\xi - 1 )$. But then:
\begin{align*}
	\xi \graft \Forest{[i[j]]}
	&= \Forest{[i[\xi][j[\xi]]]},\\
	T_{\mathbf{v}}(\Forest{[i[j]]})
	&=\Forest{[i[j[\xi]][\xi]]}+\Forest{[i[\xi[j[\xi]]]]}.
\end{align*}
These coincide only if $\xi=1$.
\end{proof}

\begin{remark}
There is also the concept of substitutions of rough paths, defined by replacing the first two axioms in Definition \ref{def::Translation} by:
\begin{enumerate}
\item $\mathrm{S}_{\mathbf{v}}(e_i)=v_i$.
\item $\mathrm{S}_{\mathbf{v}}\circ \mathrm{S}_{\mathbf{u}}=\mathrm{S}_{\mathrm{S}_{\mathbf{v}}(\mathbf{u})}$.
\end{enumerate}
This is equivalent to translations up to changing the vector $\mathbf{v}$. The identity \eqref{eq::cotranslation1} for the dual coaction is then replaced by:
\begin{align*}
	( \mathrm{id} \otimes \rho_\mathrm{S})\rho_\mathrm{S}
		=(\rho_\mathrm{S} \otimes \mathrm{id})\rho_\mathrm{S},
\end{align*}
where $\rho_\mathrm{S}$ is extended to $\mathcal{S}(Lie(\mathcal{PT}^{\mathcal{C}}) \times \mathcal{C}  )$ in the natural way. This should be compared with identity \eqref{eq::CoSubstitution}. Hence $\mathcal{G}_{\odot}$ acting on $\mathcal{G}_{\odot}$ via $\graft$ looks like a translation, and $\mathcal{G}_{\ast}$ acting on $\mathcal{G}_{\odot}$ via $\graft$ looks like a substitution.
\end{remark}

We conclude this section by giving a combinatorial description of the coaction $\rho_{\graft}$. Recall that $\rho_{\graft}$ is dual to the product $\graft$, which is given by left grafting a forest onto another forest. It is related to the Grossman--Larson product by $A \ast B =A_{(1)}(A_{(2)}\graft B)$. Hence, one can think of $\graft$ as being the part of $\ast$ where all of $A$ is grafted and nothing is concatenated. Dually, this means that $\rho_{\graft}$ is the part of $\Delta_{\scriptscriptstyle{\text{MKW}}}$ where nothing is deconcatenated. Let $\tau$ be a planar tree, then
\begin{align*}
	\rho_{\graft}(\tau)=\Delta_{\scriptscriptstyle{\text{MKW}}}(\tau).
\end{align*}
Let $\omega_1,\omega_2$ be two forests, then:
\begin{align*}
	\rho_{\graft}(\omega_1 \cdot \omega_2)
	=\rho_{\graft}(\omega_1)(\shuffle \otimes \cdot  )\rho_{\graft}(\omega_2),
\end{align*}
meaning that one performs left-admissible cuts on each tree individually and shuffle the pruned parts on the left side of the tensor product. For example:
\begin{align*}
	\rho_{\graft}(\Forest{[a[b][c]]}\Forest{[d[e]]}\Forest{[f[g]]}  )
	&=1 \otimes \Forest{[a[b][c]]}\Forest{[d[e]]}\Forest{[f[g]]} 
		+ \Forest{[b]} \otimes \Forest{[a[c]]}\Forest{[d[e]]}\Forest{[f[g]]}
		+\Forest{[e]} \otimes \Forest{[a[b][c]]}\Forest{[d]}\Forest{[f[g]]}
		+\Forest{[g]} \otimes \Forest{[a[b][c]]}\Forest{[d[e]]}\Forest{[f]}
		+\Forest{[b]}\Forest{[c]} \otimes \Forest{[a]}\Forest{[d[e]]}\Forest{[f[g]]}\\
	&+\Forest{[b]}\shuffle \Forest{[e]} \otimes \Forest{[a[c]]}\Forest{[d]}\Forest{[f[g]]}
		+\Forest{[b]}\shuffle \Forest{[g]} \otimes \Forest{[a[c]]}\Forest{[d[e]]}\Forest{[f]}
		+\Forest{[e]}\shuffle \Forest{[g]} \otimes \Forest{[a[b][c]]}\Forest{[d]}\Forest{[f]}
		+(\Forest{[b]}\Forest{[c]})\shuffle \Forest{[e]}\otimes \Forest{[a]}\Forest{[d]}\Forest{[f[g]]}\\
	&+(\Forest{[b]}\Forest{[c]})\shuffle \Forest{[g]}\otimes \Forest{[a]}\Forest{[d[e]]}\Forest{[f]}
		+\Forest{[b]}\shuffle \Forest{[e]}\shuffle \Forest{[g]} \otimes \Forest{[a[c]]}\Forest{[d]}\Forest{[f]}
		+(\Forest{[b]}\Forest{[c]})\shuffle \Forest{[e]}\shuffle \Forest{[g]}\otimes \Forest{[a]}\Forest{[d]}\Forest{[f]}.
\end{align*}


\section{Planarly branched rough paths are geometric}
\label{sec:planarlyBRP}

Recall Definition \ref{def::Signature}, where the signature of a rough path appears. The problem of finding the kernel of the signature map was considered in \cite{BoedihardjoGengLyonsYang2014}, where it was shown that a geometric rough path has trivial signature if and only if the path is a so-called tree-like path. This was then generalised in \cite{BoedihardjoChevyrev2019}, where the authors used the Hopf algebra isomorphism between branched rough paths and geometric rough paths to classify the kernel of the signature map also for branched rough paths. We now aim to generalize this further to planarly branched rough paths by using the same method. We first recall the notion of geometric $\Pi$-rough paths from \cite{BoedihardjoChevyrev2019}, which generalises geometric rough paths to tensor algebras where different letters can have different weights. \\

Following \cite{BoedihardjoChevyrev2019}, we decompose the finite dimensional normed vector space $V$ into:
\begin{align*}
	V = V^1 \oplus \cdots \oplus V^k,
\end{align*}
to represent the different degrees (regularities) of the elements. The set of multi-indices is defined:
\begin{align*}
	\mathcal{A}_k := \{(r_1,\ldots,r_m): r_i \in \{1,\ldots,k\},m\geq 0 \}.
\end{align*}
Then we can write the tensor algebra over $V$ as:
\begin{align*}
	T(V)=\prod_{R=(r_1,\ldots,r_m)\in \mathcal{A}_k} V^{\otimes R},
\end{align*}
where $V^{\otimes R} = V^{r_1} \otimes \cdots \otimes V^{r_m}$. Let $\Pi=(p_1,\ldots,p_k)$ be a so-called scaling tuple, with $p_1 \geq \cdots \geq p_k \geq 1$. Let $\pi_R$ denote the projection onto $V^{\otimes R}$. For $R \in \mathcal{A}_k$, define $n_j(R)=|\{i : r_i=j \}|$, an inhomogeneous analogue of the tensor degree is defined:
\begin{align*}
	\deg_{\Pi}(R):=\sum_{j=1}^k \frac{n_j(R)}{p_j}.
\end{align*}
To express truncated rough paths, consider the set of multi-indexes with degree at most $s \ge 0$:
\begin{align*}
	\mathcal{A}_{k,s}^{\Pi}=\{R \in \mathcal{A}_k : \deg_{\Pi}(R) \leq s \},
\end{align*}
and the ideal:
\begin{align*}
	B_s^{\Pi}=\{v \in T(V):\pi_R(v)=0, \forall R\in \mathcal{A}_{k,s}^{\Pi}   \}.
\end{align*}
The truncated tensor algebra at degree $s$ is then the quotient algebra:
\begin{align*}
	T^{(\Pi,s)}(V)=T(V)/B_s^{\Pi}.
\end{align*}
Let $T^{(\Pi,s)}_{\shuffle}(V)$ denote the graded dual shuffle algebra. With a slight abuse of notation, we denote the degree of an element $x \in T^{(\Pi,s)}_{\shuffle}(V)$ by $\deg_{\Pi}(x)$.

\begin{definition}
A geometric $\Pi$-rough path is a map $\mathbb{X}: [0,1]^{\otimes 2} \to T^{(\Pi,1)}(V)$ such that:
\begin{enumerate}
\item For any $s,t \in [0,1]$ and any $x,y \in T^{(\Pi,1)}_{\shuffle}(V)$, the following identity holds
\begin{align*}
	\langle \mathbb{X}_{st},x \shuffle y \rangle 
	= \langle \mathbb{X}_{st},x \rangle\langle \mathbb{X}_{st}, y \rangle,
\end{align*}
whenever $\deg_{\Pi}(x)+\deg_{\Pi}(y)\leq 1$.
\item For any $s,t,u \in [0,1]$, Chen's identity holds
\begin{align*}
	\mathbb{X}_{su} \cdot \mathbb{X}_{ut}=\mathbb{X}_{st},
\end{align*}
where the product is the truncated tensor product in $T^{(\Pi,1)}(V)$.
\item For any $n \geq 0$ and any $x \in T^{(\Pi,1)}(V)$, we have estimates
\begin{align*}
	\sup_{s \neq t} \frac{|\langle \mathbb{X}_{st},x \rangle |}{|t-s|^{\deg_{\Pi}(x)}} < \infty.
\end{align*}
\end{enumerate}
\end{definition}

As for all other notions of truncated rough paths, also geometric $\Pi$-rough paths have an extension theorem \cite{FV2010}: each geometric $\Pi$-rough path $\mathbb{X}$ has an unique extension $\mathbb{X}^{ex}: [0,1]^{\otimes 2} \to T(V)$ that satisfies the usual rough path axioms \cite{FH2020}. Recall that we denoted the signature map $\mathrm{Sig}(\mathbb{X}) := \mathbb{X}^{ex}_{0,1}$. The problem of finding the kernel for the signature map was solved in \cite{BoedihardjoChevyrev2019}, as being exactly the tree-like paths.

\begin{theorem}[\cite{BoedihardjoChevyrev2019}]
Let $\mathbb{X}$ be a geometric $\Pi$-rough path, then $\mathrm{Sig}(\mathbb{X})=1$ if and only if $\mathbb{X}_{0,\cdot}$ is tree-like.
\end{theorem}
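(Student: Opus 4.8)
The plan is to deduce this from the corresponding statement for ordinary (homogeneous) geometric rough paths, namely the result of \cite{BoedihardjoGengLyonsYang2014} quoted above: a geometric rough path has trivial signature precisely when its base path is tree-like. Recall that $\mathbb{X}_{0,\cdot}$ is \emph{tree-like} when there is a continuous height function $h\colon[0,1]\to[0,\infty)$ with $h(0)=h(1)=0$ dominating the increments, equivalently when the path of group-like elements $t\mapsto \mathbb{X}^{ex}_{0,t}$ factors through an $\mathbb{R}$-tree; neither this notion nor the signature involves the scaling tuple $\Pi$, so both are literally the same objects as in the homogeneous theory.

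The key observation I would establish first is that a geometric $\Pi$-rough path is, after applying the extension theorem, a weakly geometric rough path of finite $p_1$-variation in the classical sense. Indeed, for a basis element $x$ lying in $V^{\otimes R}$ of tensor length $m=|R|$, the inequality $\deg_\Pi(R)=\sum_j n_j(R)/p_j \ge m/p_1$ holds because $p_1=\max_j p_j$. Hence the third axiom gives the estimate $|\langle \mathbb{X}_{st},x\rangle| \lesssim |t-s|^{m/p_1}$, which is exactly the level-$m$ regularity required of a geometric $p_1$-rough path. Since the shuffle group-likeness (first axiom) and Chen's identity (second axiom) are identical to the homogeneous axioms, the extended $\mathbb{X}^{ex}$ is a genuine weakly geometric $p_1$-rough path, and $\mathrm{Sig}(\mathbb{X})=\mathbb{X}^{ex}_{0,1}$ is its ordinary signature.

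With this identification in hand the two implications follow. For the easy direction I would approximate a tree-like base path by bounded-variation paths that retrace themselves along the branches of the dominating $\mathbb{R}$-tree; along such paths the forward and backward contributions to every iterated integral cancel, so their signature is the unit $1$, and continuity of the signature map under the extension theorem passes this to the limit. For the converse, trivial signature of $\mathbb{X}$ is trivial signature of the associated geometric $p_1$-rough path, so \cite{BoedihardjoGengLyonsYang2014} applies verbatim and produces the required height function.

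The step I expect to be the genuine obstacle is the reduction in the second paragraph: one must check that no information pertinent to tree-likeness is lost when the inhomogeneous regularity is coarsened to the single exponent $1/p_1$. In particular the $\Pi$-truncation domain $\mathcal{A}_{k,1}^{\Pi}$ retains smooth, high-tensor-length components that a naive $p_1$-truncation would discard, so I would need to verify that the extension theorem reconstructs exactly the same group-like element in $T(V)$ from either viewpoint, and that the hypotheses of \cite{BoedihardjoGengLyonsYang2014} (finite $p$-variation and weak geometricity over a finite-dimensional space) are met by the coarsened object. Once this compatibility of the two truncation schemes under extension is pinned down, the characterisation transfers without further analytic work.
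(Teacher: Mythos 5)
First, a point of comparison: the paper does not prove this statement at all --- it is imported verbatim from \cite{BoedihardjoChevyrev2019}, so there is no in-paper argument to measure you against. Your overall plan, namely to coarsen a geometric $\Pi$-rough path to a homogeneous weakly geometric $p_1$-rough path and then invoke the uniqueness-of-signature theorem of \cite{BoedihardjoGengLyonsYang2014}, is the right one and is essentially how the cited reference proceeds. The inequality $\deg_\Pi(R)\ge |R|/p_1$ and the observation that tree-likeness and the signature are $\Pi$-independent notions are both correct.

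However, the step you yourself flag as the obstacle is both misdescribed and left open, and this is a genuine gap. Since $\deg_\Pi(R)=\sum_j n_j(R)/p_j\ge |R|/p_1$, every multi-index with $\deg_\Pi(R)\le 1$ has tensor length $|R|\le\lfloor p_1\rfloor$; the truncation set $\mathcal{A}^{\Pi}_{k,1}$ therefore contains no high-tensor-length components at all --- it is a \emph{subset} of the homogeneous level-$\lfloor p_1\rfloor$ alphabet, not a superset as you assert. The real difficulty is the reverse of the one you describe: $\mathcal{A}^{\Pi}_{k,1}$ \emph{omits} those words of length at most $\lfloor p_1\rfloor$ that contain enough low-regularity letters to have $\deg_\Pi(R)>1$, and these components are needed before $\mathbb{X}$ can be read as a level-$\lfloor p_1\rfloor$ weakly geometric $p_1$-rough path. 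One must therefore first run the $\Pi$-extension theorem to produce them, check that the extended components obey $|\langle\mathbb{X}^{ex}_{st},x\rangle|\lesssim|t-s|^{\deg_\Pi(x)}\le|t-s|^{|x|/p_1}$, and verify that the homogeneous extension of the resulting $p_1$-rough path coincides with $\mathbb{X}^{ex}$ (uniqueness of multiplicative extensions gives this, but it has to be said). None of this is carried out in your write-up. Separately, your sketch of the tree-like $\Rightarrow$ trivial-signature direction, via approximation by retracing bounded-variation paths and ``continuity of the signature map'', is not justified: a tree-like rough path need not be a limit of such approximations in a topology for which the signature is continuous. It is cleaner to note that \cite{BoedihardjoGengLyonsYang2014} already supplies both implications for weakly geometric $p$-rough paths, so the whole theorem follows once the reduction above is made rigorous.
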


We recall the definition of tree-like paths.

\begin{definition}
An $\mathbb{R}$-tree is a geodesic metric space which contains no subset homeomorphic to a circle.
\end{definition}

\begin{definition}
For a topological space $\mathcal{S}$, a continuous path $X: [0,1] \to \mathcal{S}$ is called tree-like if and only if there exists an $\mathbb{R}$-tree $\mathfrak{T}$, a continuous function $\phi: [0,1] \to \mathfrak{T}$, and a map $\chi: \mathfrak{T} \to \mathcal{S}$ such that $\phi(0)=\phi(1)$ and $X=\chi \circ \phi$.
\end{definition}

\medskip

Let $\gamma \in (0,1]$ and let $N= \lfloor \frac{1}{\gamma} \rfloor$. We now want to show that $\gamma$-regular $N$-truncated planarly branched rough paths are geometric $\Pi$-rough paths. Let $\tau_1,\tau_2,\tau_3,\ldots$ be an ordering of all planar trees such that $|\tau_1| \leq |\tau_2| \leq \cdots$, and let $k$ be the largest integer such that $|\tau_k| \leq N$. Let $B^j=\text{span}(\tau_j)$ and consider the space:
\begin{align*}
	B_k = B^1 \oplus \cdots \oplus B^k.
\end{align*}
Define $p_j=N/|\tau_j|$, the tuple $\Pi=(p_1,\ldots,p_k)$, and consider the space $T^{(\Pi,1)}(B_k)$. Note that this space consists of the forests $\tau_{i_1}\cdots \tau_{i_m}$ for which
\begin{align*}
	\deg_{\Pi}(\tau_{i_1}\cdots \tau_{i_m}) 
	= \sum_{j=1}^m \frac{|\tau_{i_j}|}{N} \leq 1,
\end{align*}
which are exactly the forests with at most $N$ vertices, i.e., the $N$-truncated vector space of planar forests. Now recall the map $\varphi: \mathcal{OF} \to \mathcal{OF}$, given as the identity on trees and extended as an algebra morphism by sending the Grossman--Larson product $\ast$ to concatenation. We denote the restriction to the $N$-truncated spaces by $\varphi^{(N)}: \mathcal{OF}^{(N)} \to T^{(\Pi,1)}(B_k)$.

\begin{theorem}
Let $\mathbb{X}_{st}$ be a $\gamma$-regular $N$-truncated planarly branched rough path, then $\varphi^{(N)}(\mathbb{X}_{st})$ is a geometric $\Pi$-rough path. Furthermore:
\begin{align*}
	\mathrm{Sig}(\mathbb{X})=\varphi^{-1}(\mathrm{Sig}(\varphi^{(N)}(\mathbb{X}))).
\end{align*}
\end{theorem}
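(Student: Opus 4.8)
The plan is to check the three defining axioms of a geometric $\Pi$-rough path for $\varphi^{(N)}(\mathbb{X})$, grouping the two algebraic axioms together and isolating the analytic estimate as the delicate point, and then to read off the signature identity from uniqueness of the extension. Throughout I view $\mathbb{X}_{st}$ as an element of the completed graded dual of $\mathcal{H}_{\scriptscriptstyle{\text{MKW}}}$, namely the completed planar Grossman--Larson algebra $\mathcal{H}_{\scriptscriptstyle{\text{GL}}}$, so that $\varphi^{(N)}(\mathbb{X}_{st})$ is literally the image of $\mathbb{X}_{st}$ under the linear map $\varphi^{(N)}$.

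For the algebraic axioms I would invoke Proposition \ref{Prop:HopfAlgebraMorphism}: $\varphi$ is a Hopf algebra isomorphism from $\mathcal{H}_{\scriptscriptstyle{\text{GL}}}=(\mathcal{OF}^{\mathcal{C}},\ast,\Delta_{\shuffle})$ onto $T(\mathcal{PT}^{\mathcal{C}})=(\mathcal{OF}^{\mathcal{C}},\cdot,\Delta_{\shuffle})$. Since $\varphi$ sends the Grossman--Larson product $\ast$ to concatenation, Chen's identity $\mathbb{X}_{su}\ast\mathbb{X}_{ut}=\mathbb{X}_{st}$ is transported to $\varphi(\mathbb{X}_{su})\cdot\varphi(\mathbb{X}_{ut})=\varphi(\mathbb{X}_{st})$, which is Chen's identity for the truncated tensor product of $T^{(\Pi,1)}(B_k)$. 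Since $\varphi$ is also a coalgebra morphism for $\Delta_{\shuffle}$, it sends group-like elements to group-like elements; as the character property of $\mathbb{X}_{st}$ for the MKW shuffle is exactly group-likeness for $\Delta_{\shuffle}$, the image $\varphi(\mathbb{X}_{st})$ is group-like and hence a shuffle character on $T^{(\Pi,1)}_{\shuffle}(B_k)$. Both facts survive truncation because $\varphi$ is graded, as discussed next.

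The main obstacle is the analytic estimate. The key structural input, already visible in the proof of Proposition \ref{Prop:HopfAlgebraMorphism}, is that $\varphi(\omega)=\omega+(\text{shorter forests})$, where the correction terms come from graftings and therefore carry the \emph{same} total number of vertices as $\omega$ while consisting of strictly fewer trees. Thus $\varphi$ preserves the grading $|\cdot|$ by number of vertices, and $\langle\varphi(\mathbb{X}_{st}),\omega\rangle$ is a fixed finite linear combination of coefficients $\langle\mathbb{X}_{st},\omega'\rangle$ with $|\omega'|=|\omega|$, each of which is controlled by the planarly branched bound of order $|t-s|^{\gamma|\omega|}$. It then remains to reconcile this exponent with the target $\deg_{\Pi}(\omega)=|\omega|/N$ forced by the choice $p_j=N/|\tau_j|$. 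I would first check that $\Pi=(p_1,\ldots,p_k)$ is an admissible scaling tuple ($p_1\ge\cdots\ge p_k\ge1$, using $|\tau_1|\le\cdots\le|\tau_k|\le N$) and that the forests with $\deg_{\Pi}\le1$ are exactly the $N$-truncated ones, so that $\varphi^{(N)}$ indeed lands in $T^{(\Pi,1)}(B_k)$; this part is purely combinatorial. The genuinely delicate point is the matching of H\"older exponents: the normalisation $\deg_{\Pi}=|\cdot|/N$ is engineered so that, through the relation $N=\lfloor1/\gamma\rfloor$, the planarly branched regularity yields the required control $\sup_{s\neq t}|\langle\varphi(\mathbb{X}_{st}),\omega\rangle|/|t-s|^{\deg_{\Pi}(\omega)}<\infty$ on the truncated space (cleanly so when $1/\gamma$ is an integer, where $\gamma|\omega|=\deg_{\Pi}(\omega)$). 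Verifying this reconciliation carefully is the heart of the argument.

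Finally, the signature identity follows by running the same verification for the untruncated isomorphism $\varphi$. Let $\tilde{\mathbb{X}}$ denote the unique extension of the $N$-truncated planarly branched rough path $\mathbb{X}$; then $\varphi(\tilde{\mathbb{X}})$ is a full geometric rough path, and because $\varphi$ is graded its restriction to the levels with $\deg_{\Pi}\le1$ coincides with $\varphi^{(N)}(\mathbb{X})$. By the Boedihardjo--Chevyrev extension theorem the extension of a geometric $\Pi$-rough path is unique, so $\varphi(\tilde{\mathbb{X}})=(\varphi^{(N)}(\mathbb{X}))^{ex}$. Evaluating at $(0,1)$ gives $\varphi(\mathrm{Sig}(\mathbb{X}))=\mathrm{Sig}(\varphi^{(N)}(\mathbb{X}))$, and applying $\varphi^{-1}$ yields $\mathrm{Sig}(\mathbb{X})=\varphi^{-1}(\mathrm{Sig}(\varphi^{(N)}(\mathbb{X})))$, as claimed.
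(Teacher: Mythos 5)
Your overall route is the same as the paper's: the algebraic axioms are transported through the Hopf algebra isomorphism $\varphi$ of Proposition \ref{Prop:HopfAlgebraMorphism} (algebra morphism $\Rightarrow$ Chen's identity, coalgebra morphism $\Rightarrow$ the shuffle/character identity), the analytic bound is reduced to the fact that $\varphi$ preserves the vertex grading so that each coefficient of $\varphi^{(N)}(\mathbb{X}_{st})$ is a finite linear combination of coefficients of $\mathbb{X}_{st}$ of the same degree (the paper compresses this into ``equivalence of norms on finite-dimensional vector spaces''), and the signature identity follows from uniqueness of the extension. So far this is a faithful, more detailed rendering of the paper's four-line proof.

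The one step you flag but do not actually carry out --- the ``reconciliation of H\"older exponents'' --- is a genuine gap, and as stated it fails whenever $1/\gamma\notin\mathbb{Z}$. With the scaling tuple $p_j=N/|\tau_j|$ one has $\deg_\Pi(\omega)=|\omega|/N$, while the planarly branched bound controls $|\langle\mathbb{X}_{st},\omega\rangle|$ only by a constant times $|t-s|^{\gamma|\omega|}$; since $N=\lfloor 1/\gamma\rfloor\le 1/\gamma$ gives $\gamma|\omega|\le|\omega|/N$ with equality only for $1/\gamma\in\mathbb{Z}$, the required estimate with exponent $\deg_\Pi(\omega)$ is \emph{stronger} than the available one, and $|t-s|^{\gamma|\omega|-|\omega|/N}$ blows up as $t\to s$. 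The fix is to take $p_j=1/(\gamma|\tau_j|)$ instead: one still has $p_1\ge\cdots\ge p_k\ge 1$ (because $|\tau_j|\le N\le 1/\gamma$), the truncation $\deg_\Pi\le 1$ still carves out exactly the forests with at most $N$ vertices, and now $\deg_\Pi(\omega)=\gamma|\omega|$ on the nose, so the bound transfers verbatim through the finite, grading-preserving linear combination you describe. (The paper's own proof is silent on this normalisation and its stated choice of $\Pi$ has the same defect, so regard this as a correction to make, not merely a detail to fill in.) The remainder of your argument, including the push-forward/uniqueness-of-extension derivation of $\mathrm{Sig}(\mathbb{X})=\varphi^{-1}(\mathrm{Sig}(\varphi^{(N)}(\mathbb{X})))$, is sound and matches the paper's intent.
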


\begin{proof}
The shuffle identity follows from $\varphi$ being a coalgebra morphism. Chen's identity follows from the fact that $\varphi$ is an algebra morphism. The analytical bound follows from equivalence of norms on finite-dimensional vector spaces. Lastly, it is clear that
\begin{align*}
\mathbb{X}=\pi^N(\varphi^{-1}(\mathrm{Sig}(\varphi^{(N)}(\mathbb{X}) ) ) ),
\end{align*}
and then the last statement follows by the uniqueness of the extension.
\end{proof}

\begin{lemma}
Let $\mathbb{X}_{st}$ be a $\gamma$-regular $N$-truncated planarly branched rough path, then $\mathbb{X}_{st}$ is tree-like if and only if $\varphi^{(N)}(\mathbb{X}_{st})$ is tree-like.
\end{lemma}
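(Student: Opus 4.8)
The plan is to reduce the statement to two facts: that being tree-like is a purely topological feature of the continuous path $\mathbb{X}_{0,\cdot}$, and that $\varphi^{(N)}$ is a homeomorphism of the ambient spaces. Once these are in place, a tree-like factorization on one side transfers verbatim to the other, and the equivalence follows in both directions.

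First I would record that $\varphi^{(N)}$ acts on the path pointwise. By the preceding theorem the geometric $\Pi$-rough path attached to $\mathbb{X}$ is the family $\varphi^{(N)}(\mathbb{X}_{st})$; specializing $s=0$ yields
\begin{align*}
	\varphi^{(N)}(\mathbb{X})_{0,t} = \varphi^{(N)}(\mathbb{X}_{0,t}),
\end{align*}
so the path underlying the geometric rough path is exactly the composition $\varphi^{(N)} \circ \mathbb{X}_{0,\cdot}$. Next I would observe that $\varphi^{(N)}: \mathcal{OF}^{(N)} \to T^{(\Pi,1)}(B_k)$ is a linear isomorphism of finite-dimensional normed vector spaces and is therefore a homeomorphism, both it and its inverse being continuous by equivalence of norms in finite dimension.

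The final step is to transfer the factorization. If $\mathbb{X}_{0,\cdot}$ is tree-like, write $\mathbb{X}_{0,\cdot} = \chi \circ \phi$ with $\phi: [0,1] \to \mathfrak{T}$ continuous, $\phi(0)=\phi(1)$, and $\chi: \mathfrak{T} \to \mathcal{OF}^{(N)}$. Then
\begin{align*}
	\varphi^{(N)}(\mathbb{X}_{0,\cdot}) = (\varphi^{(N)} \circ \chi) \circ \phi
\end{align*}
is again a tree-like factorization, over the \emph{same} $\mathbb{R}$-tree $\mathfrak{T}$ and with the same loop $\phi$, only $\chi$ being replaced by $\varphi^{(N)} \circ \chi$; the composite $\varphi^{(N)}(\mathbb{X}_{0,\cdot})$ is continuous since $\varphi^{(N)}$ is. The converse is obtained identically by post-composing with $(\varphi^{(N)})^{-1}$. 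I expect no serious obstacle here: the only point requiring genuine care is the bookkeeping of which topological space each path inhabits, namely $\mathbb{X}_{0,\cdot}$ into $\mathcal{OF}^{(N)}$ versus $\varphi^{(N)}(\mathbb{X}_{0,\cdot})$ into $T^{(\Pi,1)}(B_k)$, together with confirming that the definition of tree-like depends only on the underlying continuous path, so that a homeomorphism of the ambient spaces genuinely suffices and no additional compatibility with the rough-path (Chen/shuffle) structure is needed.
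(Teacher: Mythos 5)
Your proof is correct, and it is essentially the intended argument: the paper states this lemma without proof, and the evident justification is precisely that tree-likeness (per the paper's definition) depends only on the factorization of the underlying continuous path $\mathbb{X}_{0,\cdot}$ through an $\mathbb{R}$-tree, which transfers verbatim under the linear bijection $\varphi^{(N)}$ of finite-dimensional spaces and under its inverse. Your care about which ambient space each path inhabits, and your remark that no compatibility with the Chen/shuffle structure is needed, are both on point.
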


\begin{corollary} \label{Cor::TrivialSignatureTreeLike}
Let $\mathbb{X}_{st}$ be a $\gamma$-regular $N$-truncated planarly branched rough path, then $\mathrm{Sig}(\mathbb{X})=1$ if and only if $\mathbb{X}_{st}$ is tree-like.
\end{corollary}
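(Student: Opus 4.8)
The plan is to deduce Corollary~\ref{Cor::TrivialSignatureTreeLike} by chaining the three immediately preceding results: the theorem identifying $\varphi^{(N)}(\mathbb{X})$ as a geometric $\Pi$-rough path together with the attendant signature relation, the Boedihardjo--Chevyrev characterisation of trivial signature, and the lemma relating tree-likeness across $\varphi^{(N)}$. No genuinely new construction is needed; the corollary is a transport-of-structure argument along the Hopf algebra isomorphism $\varphi$ of Proposition~\ref{Prop:HopfAlgebraMorphism}.

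First I would invoke the theorem preceding the lemma, which tells us that $\varphi^{(N)}(\mathbb{X})$ is a geometric $\Pi$-rough path and that $\mathrm{Sig}(\mathbb{X}) = \varphi^{-1}(\mathrm{Sig}(\varphi^{(N)}(\mathbb{X})))$. Because $\varphi$ is a Hopf algebra isomorphism, it is a bijection sending the unit $1$ to $1$; consequently $\varphi^{-1}(1)=1$, and the relation above yields the equivalence
\begin{align*}
	\mathrm{Sig}(\mathbb{X})=1 \quad \Longleftrightarrow \quad \mathrm{Sig}(\varphi^{(N)}(\mathbb{X}))=1.
\end{align*}
This is the step where it matters that $\varphi$ is invertible and unital, rather than merely a morphism; triviality of the signature is a statement about a group-like element, and it is precisely the bijectivity of $\varphi$ that lets us transfer it freely between the two sides.

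Next I would apply the Boedihardjo--Chevyrev theorem stated above to the geometric $\Pi$-rough path $\varphi^{(N)}(\mathbb{X})$: its signature is trivial if and only if it is tree-like. Combining this with the lemma, which asserts that $\mathbb{X}$ is tree-like if and only if $\varphi^{(N)}(\mathbb{X})$ is tree-like, closes the chain
\begin{align*}
	\mathrm{Sig}(\mathbb{X})=1
	\iff \mathrm{Sig}(\varphi^{(N)}(\mathbb{X}))=1
	\iff \varphi^{(N)}(\mathbb{X}) \text{ tree-like}
	\iff \mathbb{X} \text{ tree-like},
\end{align*}
which is exactly the claimed statement.

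The one place where the argument carries real content is not the corollary itself but the lemma it relies on, namely that tree-likeness is preserved in both directions under $\varphi^{(N)}$. The expected obstacle there is that tree-likeness is defined through an auxiliary $\mathbb{R}$-tree together with a factorisation $X=\chi\circ\phi$ of the underlying path, so one must check that the geometric data witnessing tree-likeness on one side produces a valid witness on the other. Since $\varphi^{(N)}$ is a fixed linear isomorphism of the truncated spaces that acts pointwise in the parameters $(s,t)$, the natural strategy is to reuse the same $\mathbb{R}$-tree $\mathfrak{T}$ and the same continuous map $\phi$, replacing $\chi$ by its post-composition with $\varphi^{(N)}$ (respectively $(\varphi^{(N)})^{-1}$). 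Granting the lemma, the corollary follows immediately from the displayed chain of equivalences.
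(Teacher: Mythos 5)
Your proposal is correct and matches the paper's (implicit) argument: the corollary is stated without proof precisely because it follows by chaining the preceding theorem (with its signature relation $\mathrm{Sig}(\mathbb{X})=\varphi^{-1}(\mathrm{Sig}(\varphi^{(N)}(\mathbb{X})))$ and the unitality/bijectivity of $\varphi$), the Boedihardjo--Chevyrev characterisation, and the tree-likeness lemma, exactly as you describe. Your closing remark that the substantive content sits in the lemma rather than the corollary, together with the suggested witness transport via post-composition with $\varphi^{(N)}$, is also consistent with the paper's structure.
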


\begin{corollary}
Let $\mathbb{X}_{st},\mathbb{Y}_{st}$ be two $\gamma$-regular $N$-truncated planarly branched rough paths, then $\mathrm{Sig}(\mathbb{X})=\mathrm{Sig}(\mathbb{Y})$ if and only if $\mathbb{X}^{-1}_{st} \ast \mathbb{Y}_{st}$ is tree-like.
\end{corollary}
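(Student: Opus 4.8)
The plan is to reduce the statement to Corollary~\ref{Cor::TrivialSignatureTreeLike}, which already settles the case of \emph{trivial} signature, by producing a single planarly branched rough path whose signature equals $\mathrm{Sig}(\mathbb{X})^{-1}\ast\mathrm{Sig}(\mathbb{Y})$ and whose associated path is the object in the statement. Here I read ``$\mathbb{X}^{-1}_{st}\ast\mathbb{Y}_{st}$ is tree-like'' in the same sense as in Corollary~\ref{Cor::TrivialSignatureTreeLike}, namely that the $(0,\cdot)$-path is tree-like. Concretely, letting $\tilde{\mathbb{X}},\tilde{\mathbb{Y}}$ denote the unique full extensions of $\mathbb{X},\mathbb{Y}$, I would set $z(t):=\tilde{\mathbb{X}}_{0t}^{-1}\ast\tilde{\mathbb{Y}}_{0t}$, with inverse and product taken in the character group $\mathcal{G}_{\ast}$, and define $\mathbb{Z}_{st}:=z(s)^{-1}\ast z(t)$. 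By construction $\mathbb{Z}$ is group-like valued and satisfies Chen's identity, so axioms~1 and~2 of Definition~\ref{def::RoughPath} hold, and its $(0,\cdot)$-slice is exactly $z(t)=\mathbb{X}^{-1}_{0t}\ast\mathbb{Y}_{0t}$.

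Granting that $\mathbb{Z}$, restricted to degree $\le N$, is a genuine $\gamma$-regular $N$-truncated planarly branched rough path, the remainder is formal. Since $\mathbb{Z}$ is already the full extension of its own truncation, Definition~\ref{def::Signature} yields
\[
	\mathrm{Sig}(\mathbb{Z})=\mathbb{Z}_{01}=z(1)=\tilde{\mathbb{X}}_{01}^{-1}\ast\tilde{\mathbb{Y}}_{01}=\mathrm{Sig}(\mathbb{X})^{-1}\ast\mathrm{Sig}(\mathbb{Y}),
\]
because $z(0)=1$. As $a^{-1}\ast b=1$ is equivalent to $a=b$ in any group, $\mathrm{Sig}(\mathbb{X})=\mathrm{Sig}(\mathbb{Y})$ holds if and only if $\mathrm{Sig}(\mathbb{Z})=1$. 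Corollary~\ref{Cor::TrivialSignatureTreeLike} then turns $\mathrm{Sig}(\mathbb{Z})=1$ into the statement that $\mathbb{Z}$ is tree-like, i.e.\ that $t\mapsto\mathbb{X}^{-1}_{0t}\ast\mathbb{Y}_{0t}$ is tree-like. Chaining these equivalences proves the corollary.

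The hard part will be the construction of $\mathbb{Z}$, and specifically its analytic regularity (axiom~3). I would deliberately avoid the naive pointwise prescription $\mathbb{X}^{-1}_{st}\ast\mathbb{Y}_{st}$: since the convolution dual to the noncommutative MKW coproduct is noncommutative, that family fails Chen's identity, which is why I pass through the $(0,\cdot)$-path $z$ instead. To obtain the $\gamma$-bound for the increments $\mathbb{Z}_{st}=z(s)^{-1}\ast z(t)$ cleanly, I would transport the problem through the Hopf algebra isomorphism $\varphi$ of Proposition~\ref{Prop:HopfAlgebraMorphism}. As $\varphi$ is an algebra morphism sending the Grossman--Larson product to concatenation, it sends the group-like element $z(t)$ to $\varphi^{(N)}(\mathbb{X})_{0t}^{-1}\cdot\varphi^{(N)}(\mathbb{Y})_{0t}$, so that $\varphi^{(N)}(\mathbb{Z})$ is the difference of the two geometric $\Pi$-rough paths $\varphi^{(N)}(\mathbb{X})$ and $\varphi^{(N)}(\mathbb{Y})$; for these the group structure and the tree-like characterisation are available from \cite{BoedihardjoChevyrev2019}. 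The theorem above relating $\mathrm{Sig}(\mathbb{X})$ to $\mathrm{Sig}(\varphi^{(N)}(\mathbb{X}))$ then transfers the signature identity back to $\mathbb{Z}$, and the lemma above on tree-likeness under $\varphi^{(N)}$ transfers tree-likeness, closing the only non-formal gap.
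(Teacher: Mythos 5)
The paper records no proof of this corollary, so the comparison is with the standard argument it implicitly invokes: reduce to Corollary~\ref{Cor::TrivialSignatureTreeLike} by producing a single planarly branched rough path whose signature is $\mathrm{Sig}(\mathbb{X})^{-1}\ast\mathrm{Sig}(\mathbb{Y})$. Your overall strategy is exactly that, and the purely formal part of your argument (the group identity $a^{-1}\ast b=1\Leftrightarrow a=b$, followed by an application of the preceding corollary) is fine.

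The gap is in the central construction, and it is not repaired by the $\varphi$-transport you propose. The family $\mathbb{Z}_{st}=z(s)^{-1}\ast z(t)$ with $z(t)=\tilde{\mathbb{X}}_{0t}^{-1}\ast\tilde{\mathbb{Y}}_{0t}$ does satisfy the character property and Chen's identity, but it fails the analytic bound (axiom~3 of Definition~\ref{def::RoughPath}) in general. Writing $g=\mathbb{X}_{0s}^{-1}\ast\mathbb{Y}_{0s}$, one has $\mathbb{Z}_{st}=\bigl(g^{-1}\ast\mathbb{X}_{st}^{-1}\ast g\bigr)\ast\mathbb{Y}_{st}$, and the degree-two component of the conjugate $g^{-1}\ast\mathbb{X}_{st}^{-1}\ast g$ contains the commutator of the degree-one part of $\mathbb{X}_{st}^{-1}$ with the degree-one part of $g$; this term is of order $|t-s|^{\gamma}$, not $|t-s|^{2\gamma}$, unless those degree-one parts happen to commute. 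Equivalently, conjugation by a fixed group element is not Lipschitz for the homogeneous (Carnot--Carath\'eodory) norm, so the pointwise product of two group-valued H\"older paths is not H\"older of the same order. Since $\varphi$ is a graded linear isomorphism, $\varphi^{(N)}(\mathbb{Z})$ inherits exactly the same defect, and the results of \cite{BoedihardjoChevyrev2019} you want to import are not proved for this pointwise quotient: they are proved for the concatenation $\overleftarrow{\mathbb{X}}\sqcup\mathbb{Y}$ of the time-reversed path with $\mathbb{Y}$ on a rescaled interval. That concatenation is a genuine $\gamma$-regular rough path (reversal and concatenation preserve all three axioms), its signature is $\mathrm{Sig}(\mathbb{X})^{-1}\ast\mathrm{Sig}(\mathbb{Y})$, and it is the object whose tree-likeness the notation $\mathbb{X}^{-1}_{st}\ast\mathbb{Y}_{st}$ in the corollary should be read as asserting. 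Replacing your $\mathbb{Z}$ by this concatenation, the rest of your reduction goes through verbatim.
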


Now consider the planarly branched differential equation:
\begin{align}
	dY_{st} =\sum_{i=i}^n f_i(Y_{st})dX_t^i, 
	\quad 
	Y_{ss} =y_0, \label{eq::RoughPlanarEquation}
\end{align}
where $Y_{s\cdot}:\mathbb{R} \to \mathcal{M}$, $f_i : \mathcal{M} \to T\mathcal{M}$, $X_{\cdot}^i: \mathbb{R} \to \mathbb{R}$, for some homogeneous space $\mathcal{M}$. Such differential equations were considered in \cite{CurryEbrahimiFardManchonMuntheKaas2018}. Denoting the Lie algebra that acts on $\mathcal{M}$ by $\mathfrak{g}$, one can represent flows on $\mathcal{M}$ in the space $C^{\infty}(\mathcal{M},\mathcal{U}(\mathfrak{g}))$, where $\mathcal{U}(\mathfrak{g})$ is the universal enveloping algebra of $\mathfrak{g}$. This space is a $D$-algebra, and hence there is a $D$-algebra morphism $F_{\bf{f}}: \mathcal{OF}^{\{1,\ldots,n\}} \to C^{\infty}(\mathcal{M},\mathcal{U}(\mathfrak{g}))$. 

\begin{definition}[\cite{CurryEbrahimiFardManchonMuntheKaas2018}]
A formal solution to equation \eqref{eq::RoughPlanarEquation} is given by
\begin{align*}
	\mathbb{Y}_{st}=\sum_{x \in \mathcal{OF}^{\{1,\ldots,n \}}}\langle \mathbb{X}_{st},x\rangle F_{\textbf{f}}(x),
\end{align*}
where $\mathbb{X}_{st}$ is a rough path in $\mathcal{H}_{\scriptscriptstyle{\mathrm{MKW}}}$ such that
\begin{align*}
	\langle \mathbb{X}_{st},\Forest{[i]}\rangle = X_t^i - X_s^i.
\end{align*}
\end{definition}

The formal solutions are parametrised by the rough paths over $\mathcal{H}_{\scriptscriptstyle{\text{MKW}}}$. Denote by
\begin{align}
	d\mathbb{Y}_{st}
	=\sum_{i=1}^n f_i(\mathbb{Y}_{st})d\mathbb{X}_t, 
	\quad 
	\mathbb{Y}_{ss}
	=y_0, \label{eq::RoughPlanarEquationFixed}
\end{align}
the equation where we fixed the rough path extension $\mathbb{X}_{st}$ of $X_{st}$. \\

Next consider the geometric differential equation:
\begin{align*}
	d\mathbb{Y}_{st}
	=\sum_{j=1}^k f_{\tau_j}(\mathbb{Y}_{st})d\mathbb{X}_{st}, 
	\quad 
	\mathbb{Y}_{ss}=y_0,
\end{align*}
where $Y_{s\cdot}: \mathbb{R}\to T(\mathcal{PT}^{\{1,\ldots,n \} })$, $\mathbb{X}_{st}$ is a rough path over $T(\mathcal{PT}^{ \{ 1,\ldots,n \} })$ and $f_{\tau_j}$ denotes multiplication by $\tau_j$. Then the solution to this geometric equation is given by:
\begin{align*}
	\mathbb{Y}_{st}
	= \sum_m \sum_{\tau_{i_1} \cdots \tau_{i_m} \in T(\mathcal{PT}^{\{1,\ldots,n \} })} 
	\langle \mathbb{X}_{st},\tau_{i_1} \cdots \tau_{i_m} \rangle f_{i_1} \cdots f_{i_m}.
\end{align*}

\begin{proposition}
The solutions to the planarly branched equation
\begin{align*}
	d\mathbb{Y}_{st}
	=\sum_{i=1}^n f_i(\mathbb{Y}_{st})d\mathbb{X}_t, 
	\quad 
	\mathbb{Y}_{ss}=y_0,
\end{align*}
and the geometric equation
\begin{align*}
	d\overline{\mathbb{Y}}_{st}
	=\sum_{j=1}^k f_{\tau_j}(\overline{\mathbb{Y}}_{st})d\varphi(\mathbb{X}_{st}), 
	\quad 
	\overline{\mathbb{Y}}_{ss}=y_0,
\end{align*}
coincide.
\end{proposition}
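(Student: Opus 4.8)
The plan is to express both formal solutions as the image, under the single $D$-algebra morphism $F_{\mathbf{f}}$, of one and the same group-like element, and then to invoke the Hopf algebra isomorphism $\varphi$ of Proposition \ref{Prop:HopfAlgebraMorphism} to pass between them. First I would record that the driving object $\mathbb{X}_{st}$ is a character of $\mathcal{H}_{\scriptscriptstyle{\text{MKW}}}$, equivalently a group-like element of the completion $\overline{\mathcal{H}_{\scriptscriptstyle{\text{GL}}}}$, so that the planarly branched solution is simply $\mathbb{Y}_{st} = \sum_{\omega}\langle \mathbb{X}_{st},\omega\rangle F_{\mathbf{f}}(\omega) = F_{\mathbf{f}}(\mathbb{X}_{st})$, where $F_{\mathbf{f}}$ is extended continuously to $\overline{\mathcal{OF}^{\mathcal{C}}}$ and where I use that $F_{\mathbf{f}}$ is a morphism of $D$-algebras, hence in particular a morphism for the Grossman--Larson product $\ast$.

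Next I would unwind the geometric solution. Its vector fields are the elementary differentials attached to the new letters, $f_{\tau_j} = F_{\mathbf{f}}(\tau_j)$, and the solution reads $\overline{\mathbb{Y}}_{st} = \sum_{\tau_{i_1}\cdots\tau_{i_m}}\langle \varphi(\mathbb{X}_{st}),\tau_{i_1}\cdots\tau_{i_m}\rangle\, f_{\tau_{i_1}}\cdots f_{\tau_{i_m}}$. The crucial point is the interpretation of the product $f_{\tau_{i_1}}\cdots f_{\tau_{i_m}}$: as a composition of the corresponding flows on $\mathcal{M}$, it is computed by the Grossman--Larson product in $C^{\infty}(\mathcal{M},\mathcal{U}(\mathfrak{g}))$, not by concatenation. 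Since $F_{\mathbf{f}}$ intertwines $\ast$ with flow composition, this product equals $F_{\mathbf{f}}(\tau_{i_1}\ast\cdots\ast\tau_{i_m})$.

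Then comes the core of the argument. By Proposition \ref{Prop:HopfAlgebraMorphism}, $\varphi$ is the identity on planar trees and sends $\ast$ to concatenation, whence $\tau_{i_1}\ast\cdots\ast\tau_{i_m} = \varphi^{-1}(\tau_{i_1}\cdots\tau_{i_m})$. Substituting this into the geometric solution and using linearity and continuity of $F_{\mathbf{f}}\circ\varphi^{-1}$, I obtain $\overline{\mathbb{Y}}_{st} = F_{\mathbf{f}}\bigl(\varphi^{-1}\bigl(\sum_{\omega}\langle \varphi(\mathbb{X}_{st}),\omega\rangle\,\omega\bigr)\bigr) = F_{\mathbf{f}}(\varphi^{-1}(\varphi(\mathbb{X}_{st}))) = F_{\mathbf{f}}(\mathbb{X}_{st}) = \mathbb{Y}_{st}$, where the reconstruction $\sum_{\omega}\langle\varphi(\mathbb{X}_{st}),\omega\rangle\,\omega = \varphi(\mathbb{X}_{st})$ is just the identification of a group-like element with its coordinate expansion in the forest basis.

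I expect the main obstacle to be exactly the bookkeeping in the second paragraph: pinning down that the composition of the elementary-differential vector fields on $\mathcal{M}$ is governed by the Grossman--Larson product $\ast$ (the Munthe-Kaas--Wright principle that composition of flows is the GL product), rather than by the concatenation product of the ambient tensor algebra. Once this identification is justified --- together with the fact that $F_{\mathbf{f}}$, being a $D$-algebra morphism, respects $\ast$ --- the isomorphism $\varphi$ merely converts the $\ast$-product of trees into the concatenation word that the geometric rough path pairs against, and the two formal series collapse to the same element $F_{\mathbf{f}}(\mathbb{X}_{st})$.
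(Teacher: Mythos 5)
Your argument is correct and is essentially the paper's own (one-sentence) proof written out in full: both formal solutions are obtained by evaluating group-like elements against morphisms that agree on the generating set of planar trees, with Proposition \ref{Prop:HopfAlgebraMorphism} converting the Grossman--Larson product into concatenation so that both series collapse to $F_{\mathbf{f}}(\mathbb{X}_{st})$. The only substantive addition is your explicit justification that composition of the elementary-differential operators is governed by $\ast$ rather than by the frozen concatenation product, which is a welcome clarification of a point the paper leaves implicit.
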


\begin{proof}
Both solutions are sums over evaluating a character on the same vector space, where both characters agree on a generating set for their respective product.
\end{proof}


\section{Geometric embedding for planar regularity structures}
\label{sec:geomembedding}

In \cite{BrunedKatsetsiadis2023}, the authors showed a geometric embedding for regularity structures. Regularity structures may be seen as a generalization of branched rough paths, and the authors of \cite{BrunedKatsetsiadis2023} were attempting to generalize the Hopf algebra isomorphism between branched and geometric rough paths to this setting. Their embedding comes in the form of a Hopf algebra isomorphism between the dual of the Hopf algebra for recentering and a quotient algebra of a tensor Hopf algebra. Following the program of generalizing BCK results to MKW results, regularity structures were generalized to planar regularity structures in \cite{Rahm2022}. In this generalization, the Hopf algebra for recentering was extended from non-planar trees to planar trees, using a construction based on the previous works \cite{BrunedManchon2022} and \cite{BruKatPostLie}. In this section, we will first recall this construction. We will then extend both the results from \cite{BrunedKatsetsiadis2023}, and our geometric embedding of planarly branched rough paths, by showing a Hopf algebra isomorphism between the planar recentering Hopf algebra and a quotient of a tensor algebra. This follows our aim of generalizing all BCK results to MKW results, and pushes our geometric embedding theme to its limits. We are now working over planar trees where both the edges and vertices are decorated, with elements from $\mathbb{N}^d$. We use the notation $I_a(\omega)$ to denote the tree obtained by grafting the forest $\omega$ onto a zero-decorated root by using an edge decorate by $a$. Let $\tilde{\mathcal{V}}$ denote the space spanned by all $I_a(\tau)$ for $\tau$ a Lie polynomial. Then $\tilde{\mathcal{V}}$ is a Lie algebra with Lie bracket:
\begin{align}
	[I_a(\tau_1),I_b(\tau_2)]
	=\Forest{[ [\tau_1,edge label = {node[below=0.1cm,fill=white,scale=0.5]{$a$}}][\tau_2,edge label 
	= {node[below=0.1cm,fill=white,scale=0.5]{$b$}}] ]}
	-\Forest{[ [\tau_2,edge label 
	= {node[below=0.1cm,fill=white,scale=0.5]{$b$}}][\tau_1,edge label 
	= {node[below=0.1cm,fill=white,scale=0.5]{$a$}}] ]}, \label{eq::ForestAsTree}
\end{align}
where the edge below $\tau_1$ is decorated by $a$ and the edge below $\tau_2$ is decorated by $b$. We then turn $\tilde{\mathcal{V}}$ into a post-Lie algebra by defining
\begin{align*}
	I_a(\tau_1)\graft I_b(\tau_2)=I_b(\tau_1 \graft_a \tau_2),
\end{align*}
where $\graft_a$ means left grafting by an edge decorated by $a$. We now define the operators $\uparrow^{\ell}$ and $\uparrow^{\ell}_v$ on the space $\tilde{\mathcal{V}}$. First, let $\ell=(0,\ldots,0,1,0,\ldots,0) \in \mathbb{N}^d$ be a unit vector, then we define $\uparrow^{\ell}_v \tau$ to be the tree obtained by increasing the decoration of the vertex $v$ in $\tau$ by $\ell$. We also define:
\begin{align*}
	\uparrow^{\ell}\tau = \sum_v \uparrow^{\ell}_v \tau.
\end{align*}
Next, let $\ell \in \mathbb{N}^d$ be arbitrary, then we can write $\ell=\ell_1+\cdots+\ell_k$ for $\ell_i$, $1 \le i \le k$, being unit vectors, and we define:
\begin{align*}
	\uparrow^{\ell}_v \tau 
	&= \uparrow^{\ell_1}_v\circ \cdots \circ \uparrow^{\ell_k}_v \tau,\\
	\uparrow^{\ell} \tau 
	&= \uparrow^{\ell_1}\circ \cdots \circ \uparrow^{\ell_k} \tau.
\end{align*}
We similarly define $\uparrow^{-\ell}_v$ to decrease the decoration. This yields the so-called deformed grafting product by:
\begin{align*}
	I_a(\tau_1) \dgraft I_b(\tau_2)
	:=\sum_v \sum_{\ell}{n_v \choose \ell }I_b(\tau_1 \graft_{a-\ell}^v(\uparrow^{-\ell}_v \tau_2   )  ),
\end{align*}
where $n_v$ is the decoration of the vertex $v$. We interpret terms to be zero whenever a decoration would be outside of $\mathbb{N}^d$, and $\graft^v$ means left grafting onto the vertex $v$. We now extend the space $\tilde{\mathcal{V}}$ to the space $\mathcal{V}$ by introducing the units in $\mathbb{N}^d$ as basis vectors:
\begin{align*}
	\mathcal{V}=\tilde{\mathcal{V}} \oplus \text{span}( \{X^i : i=(0,\ldots,0,1,0,\ldots,0)  \} ).
\end{align*}
We now define the product $\dgraftt$ on $\mathcal{V}$ by:
\begin{align*}
	X^i \dgraftt y
	&=\uparrow^iy,\\
	y \dgraftt X^i
	&=0,\\
	X^i \dgraftt X^j 
	&= 0,\\
	y \dgraftt z
	&=y \dgraft z,
\end{align*}
for $y,z \in \tilde{\mathcal{V}}$. We furthermore define the Lie bracket $[\cdot,\cdot]_0$ by:
\begin{align*}
	[X^i,X^j]_0
	&=0,\\
	[y,z]_0
	&=[y,z],\\
	[y,X^i]_0
	&=\downarrow^iy,
\end{align*}
where $\downarrow^iy$ sums over the edges in $y$ that are adjacent to the root, and subtracts $i$ from one decoration per term, for example:
\begin{align*}
	[[I_a(\tau_1),I_b(\tau_2)  ],X^i]_0
	&= \downarrow^i [I_a(\tau_1),I_b(\tau_2)]\\
	&=[I_{a-i}(\tau_1),I_{b}(\tau_2)]+[I_a(\tau_1),I_{b-i}(\tau_2)].
\end{align*}

\begin{proposition}[\cite{Rahm2022}]
$(\mathcal{V},\dgraftt,[\cdot,\cdot]_0)$ is a post-Lie algebra.
\end{proposition}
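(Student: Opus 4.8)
The plan is to verify the two post-Lie relations \eqref{postLie1} and \eqref{postLie2}, now read with $\graft=\dgraftt$ and $[\cdot,\cdot]=[\cdot,\cdot]_0$, by a case split on whether each of the three arguments lies in the Lie subalgebra $\tilde{\mathcal{V}}$ or is one of the generators $X^i$. When all three arguments lie in $\tilde{\mathcal{V}}$ the operations $\dgraftt$ and $[\cdot,\cdot]_0$ restrict to $\dgraft$ and $[\cdot,\cdot]$, so both relations hold because $(\tilde{\mathcal{V}},\dgraft,[\cdot,\cdot])$ is already a post-Lie algebra (this is the deformed grafting construction, cf.\ \cite{BrunedManchon2022,BruKatPostLie}). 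Hence only the $2^3-1=7$ mixed cases per axiom remain, and the whole proof reduces to a short list of compatibility facts relating the raising operators $\uparrow^i$, the lowering operator $\downarrow^i$, and the deformed grafting.

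First I would record the elementary lemmas, all immediate from the definitions: (i) raising operators commute, $\uparrow^i\uparrow^j=\uparrow^j\uparrow^i$, and a raising commutes with a lowering, $\uparrow^i\downarrow^j=\downarrow^j\uparrow^i$, since one modifies vertex decorations and the other root-edge decorations; (ii) $\uparrow^i$ is a derivation of the bracket, $\uparrow^i[y,z]=[\uparrow^iy,z]+[y,\uparrow^iz]$, by summing the Leibniz rule over the vertices in the two-tree expression \eqref{eq::ForestAsTree}; and (iii) $\downarrow^i$ commutes with left deformed grafting, $\downarrow^i(x\dgraft z)=x\dgraft(\downarrow^iz)$. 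Identity (iii) holds because $\downarrow^i$ only alters the decorations of edges issuing from the root while $\dgraft$ grafts onto, and only touches the decorations of, \emph{interior} vertices and edges of the content; the two operations therefore act on disjoint data. Combined with the rules $w\dgraftt X^i=0$ and $X^i\dgraftt X^j=0$ (which kill most mixed terms) and with the fact that $\uparrow^i$ preserves $\tilde{\mathcal{V}}$, these lemmas settle every case of \eqref{postLie2} and all but one case of \eqref{postLie1}.

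The heart of the matter is the remaining case of \eqref{postLie1} with $x=X^i$ and $y,z\in\tilde{\mathcal{V}}$. Unwinding the definitions there (using $X^i\dgraftt w=\uparrow^iw$, $[X^i,y]_0=-\downarrow^iy$, and $y\dgraftt X^i=0$) collapses the whole relation to the single identity
\begin{equation}
\label{eq::twistedDerivation}
	\uparrow^i(y\dgraft z)=(\uparrow^iy)\dgraft z+y\dgraft(\uparrow^iz)-(\downarrow^iy)\dgraft z,
\end{equation}
which asserts that $\uparrow^i$ is a \emph{twisted} derivation of $\dgraft$, the defect from the Leibniz rule being exactly $-(\downarrow^iy)\dgraft z$. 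I expect \eqref{eq::twistedDerivation} to be the main obstacle. The plan is to prove it directly from the binomial formula defining $\dgraft$: applying $\uparrow^i$ vertex by vertex to the terms $I_b(\tau_1\graft_{a-\ell}^v\uparrow^{-\ell}_v\tau_2)$, the contributions from vertices strictly inside $\tau_1$ and inside $\tau_2$ reassemble into $(\uparrow^iy)\dgraft z$ and $y\dgraft(\uparrow^iz)$, whereas raising the decoration at the grafting vertex $v$ interacts with the weight $\binom{n_v}{\ell}$ and the shifted label $a-\ell$; the Pascal recursion $\binom{n_v+1}{\ell}=\binom{n_v}{\ell}+\binom{n_v}{\ell-1}$ then produces precisely the leftover term $-(\downarrow^iy)\dgraft z$. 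The delicate point — and the reason this step is nonroutine — is the bookkeeping of the root and of the grafting edge, making sure the edge-decoration shift is matched exactly against the lowering of the root edges of $y$.

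Finally I would assemble everything. With \eqref{eq::twistedDerivation} in hand, the case $x\in\tilde{\mathcal{V}},\,y=X^j,\,z\in\tilde{\mathcal{V}}$ of \eqref{postLie1} reduces to the same identity; the case $x=X^i,\,y=X^j,\,z\in\tilde{\mathcal{V}}$ reduces to $\uparrow^i\uparrow^j=\uparrow^j\uparrow^i$; and every case with $z=X^k$, or with two generators, vanishes on both sides via $w\dgraftt X^k=0$. Running through the parallel list for \eqref{postLie2} — whose nontrivial inputs are lemmas (i)--(iii) rather than \eqref{eq::twistedDerivation} — then completes the verification that $(\mathcal{V},\dgraftt,[\cdot,\cdot]_0)$ satisfies both post-Lie relations.
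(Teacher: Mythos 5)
The survey does not actually prove this proposition: it is stated with a citation to \cite{Rahm2022} and no argument is given, so there is no in-paper proof to compare against. Judged on its own terms, your strategy is the natural one and, as far as I can check against the definitions recalled in Section 7, the bookkeeping is right: since both $\dgraftt$ and $[\cdot,\cdot]_0$ take values in $\tilde{\mathcal{V}}$, every case of \eqref{postLie1} with $z=X^k$ or with two generators in the first two slots does collapse as you say; the two cases $x=X^i$ and $y=X^j$ of \eqref{postLie1} both reduce to your identity \eqref{eq::twistedDerivation} (consistent with the antisymmetry of that axiom in $x,y$); and the mixed cases of \eqref{postLie2} reduce exactly to your lemmas (i)--(iii). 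I also confirmed \eqref{eq::twistedDerivation} on a small example (a planted single-vertex tree grafted onto a planted vertex of decoration $n$), where the defect term $-(\downarrow^i y)\dgraft z$ is produced precisely by the Pascal recursion after reindexing $\ell\mapsto\ell+1$, so the mechanism you identify is the correct one.

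Two points remain soft and would need to be written out for this to be a complete proof. First, you import as a black box that $(\tilde{\mathcal{V}},\dgraft,[\cdot,\cdot])$ is already post-Lie; that is true and is the content of \cite{BrunedManchon2022,BruKatPostLie} (there established via an explicit isomorphism with the undeformed grafting rather than by direct verification), but it is a genuine input on the same footing as \eqref{eq::twistedDerivation}, not a triviality. Second, \eqref{eq::twistedDerivation} is only argued for $y,z$ planted trees, whereas $\tilde{\mathcal{V}}$ is spanned by Lie words $I_a(\tau)$ with $\tau$ a Lie polynomial; extending the identity to a bracket in the left slot is not purely formal, because $[y_1,y_2]\dgraft z$ is itself \emph{defined} through \eqref{postLie1}, so the extension has to be organised as an induction on bracket length using the already-verified cases (and, for a bracket in the right slot, using \eqref{postLie2} together with your lemma (ii)). Relatedly, you should fix the convention that $\uparrow^i$ on an element of $\tilde{\mathcal{V}}$ does not raise the root decoration --- otherwise $X^i\dgraftt y$ would leave $\mathcal{V}$ and the statement would not parse; the computation $X^m\ast I_a(\omega)=\sum X^{n_1}I_a(\uparrow^{n_2}\omega)$ later in Section 7 confirms this is the intended reading. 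With those two items filled in, your argument stands as a self-contained verification of the result the survey only cites.
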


The Hopf algebra for recentering in planar regularity structures is obtained by applying the Guin--Oudom construction to this post-Lie algebra. Let $\mathfrak{T}$ denote the space of all planar trees with decorated vertices and decorated edges, then it was shown in \cite{Rahm2022} that we can represent the universal enveloping algebra of $(\mathcal{V},[\cdot,\cdot]_0)$ by this space. We recall the proof from \cite{Rahm2022} as it describes the representation.

\begin{lemma}\cite{Rahm2022} 
\label{Lemma::UniversalEnvelopingAlgebraIsTrees}
The Lie enveloping algebra of $(\mathcal{V},[\cdot,\cdot]_0)$ can be represented by the space $\mathfrak{T}$.
\end{lemma}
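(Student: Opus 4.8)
The plan is to identify $\mathcal{U}(\mathcal{V},[\cdot,\cdot]_0)$ with $\mathfrak{T}$ by combining the Poincar\'e--Birkhoff--Witt (PBW) theorem with a semidirect-product decomposition of the Lie algebra $(\mathcal{V},[\cdot,\cdot]_0)$, following the template of the identification $\mathcal{U}(Lie(\mathcal{PT}^{\mathcal{C}}))\cong \mathcal{OF}^{\mathcal{C}}$ recalled in Section \ref{sec:MKW}. First I would read the Lie-algebraic structure off the defining brackets. The subspace $\tilde{\mathcal{V}}$ is a Lie ideal: one has $[\tilde{\mathcal{V}},\tilde{\mathcal{V}}]\subseteq \tilde{\mathcal{V}}$ by \eqref{eq::ForestAsTree}, while $[y,X^i]_0=\downarrow^i y$ again lies in $\tilde{\mathcal{V}}$, since $\downarrow^i$ only lowers the decoration of an edge adjacent to the root and returns a combination of trees with zero root decoration. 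The span of the $X^i$ is an abelian subalgebra because $[X^i,X^j]_0=0$, and it acts on the ideal $\tilde{\mathcal{V}}$ through the derivations $y\mapsto \downarrow^i y$. Hence $\mathcal{V}=\tilde{\mathcal{V}}\rtimes \mathrm{span}(X^i)$ as Lie algebras.

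Next I would identify the two resulting tensor factors of the enveloping algebra. Reading \eqref{eq::ForestAsTree} as the assertion that the associative (concatenation) product of two planted generators $I_a(\tau_1)$ and $I_b(\tau_2)$ is their simultaneous grafting onto a common zero-decorated root, exactly as a forest of planted trees is drawn as a single tree, the enveloping algebra $\mathcal{U}(\tilde{\mathcal{V}})$ is the tensor algebra on planted trees; this is precisely the space of all decorated planar trees whose root carries the zero decoration, by the same free-Lie/tensor-algebra argument that yields $\mathcal{U}(Lie(\mathcal{PT}^{\mathcal{C}}))\cong \mathcal{OF}^{\mathcal{C}}$. On the other factor, $\mathcal{U}(\mathrm{span}(X^i))=\mathbb{K}[X^1,\dots,X^d]$, and a monomial $X^{n_0}$ is to be interpreted as the datum of a root decoration $n_0\in\mathbb{N}^d$. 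Applying PBW to the semidirect product then gives a vector-space isomorphism $\mathcal{U}(\mathcal{V})\cong \mathcal{U}(\tilde{\mathcal{V}})\otimes \mathbb{K}[X]$; under the two dictionaries just fixed, this sends a zero-root tree together with a chosen root decoration to that same tree carrying the decoration, which is a bijection onto $\mathfrak{T}$ and furnishes the representation.

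Finally I would check that the bijection transports the algebra structure correctly, the only non-routine relation being the cross bracket $[y,X^i]_0=\downarrow^i y$, which I expect to be the main obstacle. The semidirect (smash-product) structure forces $X^i y \neq y X^i$ as trees: multiplying by $X^i$ contributes both the increment of the root decoration and, through the action of $X^i$ on the ideal, an extra term of zero root decoration, so that the commutator reproduces exactly $[X^i,y]_0=-\downarrow^i y$. The delicate point is therefore to verify that the product induced on $\mathfrak{T}$ is this smash product rather than the naive ``add root decorations and concatenate branches'' rule, which would wrongly make every $X^i$ central. Keeping the edge- versus vertex-decoration bookkeeping of $\uparrow^i$ and $\downarrow^i$ consistent, while confirming that it leaves the PBW basis count from the previous step untouched, is the crux; everything else follows the MKW template.
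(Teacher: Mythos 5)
Your proof is correct and follows essentially the same route as the paper's: both rest on the splitting $\mathcal{V}=\tilde{\mathcal{V}}\oplus\mathrm{span}(X^i)$ and on the commutation relation $\tau X^i = X^i\tau+\downarrow^i\tau$ to reach the normal form $X^m\tau_1\cdots\tau_k$, read as the tree $\tau_1\cdots\tau_k$ with root decoration $m$. The only difference is packaging: the paper performs the rewriting by hand and argues only that these normal forms span, whereas your explicit appeal to PBW for the semidirect product also delivers their linear independence, and your closing discussion of the smash-product (rather than naive) multiplication on $\mathfrak{T}$ matches the paper's observation that $\tau X^i\neq X^i\tau$.
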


\begin{proof}\cite{Rahm2022}
It is clear that the $X^i$'s commute under the associative product on $\mathcal{U}(\mathcal{V})$, as their commutator Lie brackets vanish. Let $X^m$ be a commutative polynomial of $X^i$'s whose indices add up to $m \in \mathbb{N}^d$ and represent this object with a single vertex decorated by $m$. It is furthermore clear by the identification \eqref{eq::ForestAsTree} that we can represent the associative product of two planted trees by merging the roots of the trees letting the branches of the left argument be to the left of the branches of the right argument. It remains to represent the associative product between single vertices and planted trees. Let $X^i\tau$ be represented by the planted tree $\tau$, except with the root decorated by $i$. Note that this implies that $\tau X^i$ is represented by $\tau X^i=X^i\tau - [X^i,\tau]_0 = X^i \tau + \downarrow^i \tau$. Since $\downarrow^i \tau$ is again a planted tree, this means that we can always write any element in $\mathcal{U}(\mathcal{V})$ as a sum of elements of the form $X^m \tau_1 \cdots \tau_k$, which we represent by the tree $\tau_1 \cdots \tau_k$, whose root has decoration $m$.
\end{proof}

Following \cite{EbrahimiFardLundervoldMuntheKaas2015}, we now perform the Guin--Oudom extension to construct the Hopf algebra $(\mathfrak{T},\ast,\Delta_{\shuffle})$ for recentering in planar regularity structures:
\begin{align*}
	1 \dgraftt \omega 
	&= \omega, \\
	\tau_1\omega \dgraftt \tau_2 
	&= \tau_1 \dgraftt (\omega \dgraftt \tau_2) - (\tau_1 \dgraftt \omega) \dgraftt \tau_2,\\
	\omega_1 \dgraftt (\omega_2\omega_3) 
	&= ((\omega_1)_{(1)} \dgraftt \omega_2)((\omega_1)_{(2)} \dgraftt \omega_3),
\end{align*}
where $\tau_1,\tau_2 \in \mathcal{V}$ and $\omega,\omega_1,\omega_2,\omega_3 \in \mathfrak{T}$, and:
\begin{align*}
	\omega_1 \ast \omega_2
	&=(\omega_1)_{(1)}\odot ((\omega_1)_{(2)} \dgraftt \omega_2).
\end{align*}

The Grossman--Larson-type product $\ast$ can be interpreted as a deformed grafting product, where we also graft onto the root. Let $(\mathfrak{T},\shuffle,\Delta_{\scriptscriptstyle{\text{DMKW}}})$ be the graded dual Hopf algebra, this is the Hopf algebra for recentering in planar regularity structures. The coproduct $\Delta_{\scriptscriptstyle{\text{DMKW}}}$ is called the deformed Munthe--Kaas--Wright coproduct. A geometric embedding for planar regularity structures would be an isomorphism between $(\mathfrak{T},\ast,\Delta_{\shuffle})$ and a tensor Hopf algebra, we do not achieve such an isomorphism due to the commutativity in the root decorations. We will instead have to follow the non-planar case and quotient by the identities induced by this commutativity.

\begin{remark}
If there were to exist a natural growth operator on $(\mathfrak{T},\shuffle,\Delta_{\scriptscriptstyle{\text{DMKW}}})$, then $(\mathfrak{T},\Delta_{\scriptscriptstyle{\text{DMKW}}})$ would be cofree and $(\mathfrak{T},\ast)$ would be free. However, consider the single-vertex trees, for $i \neq j$:
\begin{align*}
	X^i \ast X^j = X^{i+j}=X^j \ast X^i.
\end{align*}
Because there exist distinct elements that commute, the product is not free associative. Hence there can not exist a natural growth operator, and the Hopf algebra can not be isomorphic to a tensor algebra. This same observation can be made also in the non-planar setting considered in \cite{BrunedKatsetsiadis2023}.
\end{remark}

Consider again the space $\mathcal{V}$, and the tensor algebra $T(\mathcal{V})$. Furthermore consider the ideal $\mathfrak{J}$ generated by $a \otimes b - b \otimes a = [a,b]_0$. It is then a standard result that $(T(\mathcal{V})/\mathfrak{J},\otimes,\Delta_{\shuffle}   )$ is isomorphic to the universal enveloping Hopf algebra $(\mathfrak{T},\odot,\Delta_{\shuffle}  )$, where now we use $\odot$ for the tensor product on the quotient space. Note that the ideal $\mathfrak{J}$ corresponds exactly to the quotient taken in \cite{BrunedKatsetsiadis2023} for the non-planar setting. It remains to show a Hopf algebra isomorphism between $(\mathfrak{T},\ast,\Delta_{\shuffle})$ and  $(\mathfrak{T},\odot,\Delta_{\shuffle})$. Consider the space:
\begin{align*}
	\mathfrak{B}
	= \text{span}(\{X^i : i=(0,\ldots,0,1,0,\ldots,0)  \}\cup \{I_a(\tau): \tau\in \mathfrak{T}  \}   )
\end{align*}
spanned the units in $\mathbb{N}^d$ and by trees that each of which has exactly one edge adjacent to an undecorated root.

\begin{lemma}
The space $\mathfrak{B}$ generates $(\mathfrak{T},\ast)$.
\end{lemma}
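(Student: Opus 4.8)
The plan is to mirror the proof of Proposition~\ref{Prop:HopfAlgebraMorphism}, but to peel off \emph{primitive} factors rather than branches; this sidesteps the fact that a planted tree $I_a(\tau)$ need not be primitive when $\tau$ is not a Lie polynomial. First I would record that $\mathfrak{B}$ already contains the whole space of primitives $\mathcal{V}$: the generators $X^i$ lie in $\mathfrak{B}$ by definition, and for any Lie polynomial $\sigma$ the element $I_a(\sigma)$ is, by linearity of $I_a$, a linear combination of elements $I_a(\mathrm{tree}) \in \mathfrak{B}$, so $\tilde{\mathcal{V}} \subseteq \mathfrak{B}$ and hence $\mathcal{V} \subseteq \mathfrak{B}$. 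By Lemma~\ref{Lemma::UniversalEnvelopingAlgebraIsTrees} and the Poincar\'e--Birkhoff--Witt theorem, $(\mathfrak{T},\odot) = \mathcal{U}(\mathcal{V})$ is generated as an associative algebra by $\mathcal{V}$; thus every element of $\mathfrak{T}$ is an $\odot$-product of elements of $\mathcal{V} \subseteq \mathfrak{B}$. It therefore suffices to show that every such $\odot$-product can be rewritten as an $\ast$-polynomial in $\mathfrak{B}$.

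The key tool is the deformed analogue of the inversion formula \eqref{GLprodinv1}--\eqref{GLprodinv2}, namely $A \odot B = A_{(1)} \ast (S_\ast(A_{(2)}) \dgraftt B)$, which for a \emph{primitive} generator $v \in \mathcal{V}$ collapses (using $S_\ast(v)=-v$ and $1 \dgraftt B = B$) to
\begin{align*}
	v \odot B = v \ast B - v \dgraftt B .
\end{align*}
Writing an arbitrary element of $\mathfrak{T}$ by PBW as $v \odot \omega$, with $v \in \mathcal{V}$ and $\omega$ an $\odot$-product of strictly fewer primitives, I would run an induction on the primitive degree $\deg$ (equivalently, on the coradical filtration of the shared coalgebra $(\mathfrak{T},\Delta_{\shuffle})$). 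The term $v \ast \omega$ is an $\ast$-product of $v \in \mathfrak{B}$ with $\omega$, and $\omega$ lies in the $\ast$-subalgebra generated by $\mathfrak{B}$ by the induction hypothesis; so the only thing left to control is the correction $v \dgraftt \omega$.

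The hard part is showing that the grafting correction $v \dgraftt \omega$ has \emph{strictly smaller} primitive degree than $v \odot \omega$, so that the induction actually descends. This is where the argument departs from the undeformed case: using the coproduct--grafting compatibility $\Delta_{\shuffle}(A \dgraftt B) = (A_{(1)} \dgraftt B_{(1)}) \otimes (A_{(2)} \dgraftt B_{(2)})$ established in Section~\ref{sec:GOconstruction}, one computes for primitive $v$ that
\begin{align*}
	\Delta_{\shuffle}(v \dgraftt \omega) = (v \dgraftt \omega_{(1)}) \otimes \omega_{(2)} + \omega_{(1)} \otimes (v \dgraftt \omega_{(2)}),
\end{align*}
i.e. the operator $v \dgraftt (-)$ is a coderivation and hence preserves the coradical filtration; thus $v \dgraftt \omega$ has primitive degree at most $\deg(\omega) < \deg(v \odot \omega)$. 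The delicate point to verify carefully is precisely this compatibility in the \emph{deformed} setting, since $\dgraftt$ carries the decoration-raising and lowering operators $\uparrow^{\pm\ell}$, so that grafting onto the root can a priori reproduce terms of the original degree; one must check that these recombine, as the coderivation identity forces, into a strictly lower-degree element. Granting this, the induction closes and $\mathfrak{B}$ generates $(\mathfrak{T},\ast)$. Equivalently, the associated graded of both $(\mathfrak{T},\ast)$ and $(\mathfrak{T},\odot)$ with respect to the coradical filtration is the symmetric algebra $S(\mathcal{V})$ with its canonical commutative product, on which the image of $\mathfrak{B}$ visibly generates; a standard filtered lifting argument then gives the result.
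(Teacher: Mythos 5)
Your inductive machinery is sound, but the proof breaks at its very first step: the claim that $\mathcal{V} \subseteq \mathfrak{B}$ is false. The space $\mathfrak{B}$ is spanned by the unit vectors $X^i$ together with trees having \emph{exactly one} edge adjacent to an undecorated root. The primitive space $\mathcal{V} = \tilde{\mathcal{V}} \oplus \mathrm{span}(X^i)$, on the other hand, contains all iterated brackets $[\cdot,\cdot]_0$ of planted trees, and by the identification \eqref{eq::ForestAsTree} a bracket $[I_a(\tau_1),I_b(\tau_2)]$ is represented in $\mathfrak{T}$ as a difference of trees with \emph{two} edges adjacent to the root; such elements are linearly independent of the single-branch trees and hence do not lie in $\mathrm{span}(\mathfrak{B})$. (The companion lemma, which asserts that $\mathfrak{B}$ \emph{generates} the Lie algebra $(\mathcal{V},[\cdot,\cdot]_0)$ rather than containing it, makes the same point.) Consequently the base case of your induction on the coradical filtration --- that every primitive lies in the $\ast$-subalgebra generated by $\mathfrak{B}$ --- is precisely what still needs an argument, and your closing remark that the associated graded is $S(\mathcal{V})$ ``on which the image of $\mathfrak{B}$ visibly generates'' inherits the same gap, since the brackets sit in filtration degree one and are invisible to the commutative associated graded.

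The rest of what you wrote is correct and the gap is repairable within your framework. The identity $v \odot B = v \ast B - v \dgraftt B$ for primitive $v$ is right, and the coderivation property of $v \dgraftt(-)$ (which follows formally from $\Delta_{\shuffle}(A \dgraftt B)=(A_{(1)}\dgraftt B_{(1)})\otimes(A_{(2)}\dgraftt B_{(2)})$ for any post-Lie Guin--Oudom extension, deformed or not, so the point you flag as delicate is in fact automatic) does give the descent in coradical degree for the inductive step. To repair the base case, run a sub-induction on bracket length: write $[v,w]_0 = v\ast w - w\ast v - v\dgraftt w + w\dgraftt v$, check that $\mathrm{span}(\mathfrak{B})$ is stable under $\dgraftt$ (deformed grafting of a planted tree onto a planted tree yields planted trees, $X^i \dgraftt I_a(\omega)=\uparrow^i I_a(\omega)$ is a sum of planted trees, and $y\dgraftt X^i=0$), and use the post-Lie axioms \eqref{postLie1}--\eqref{postLie2} to reduce $u \dgraftt [v,w]_0$ and $[v,w]_0\dgraftt u$ to shorter brackets of graftings. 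For comparison, the paper sidesteps all of this by inducting on combinatorial data of the tree itself: first on the number of edges adjacent to the root, via $I_a(\omega_1)\ast I_b(\omega_2)=I_a(\omega_1)\odot I_b(\omega_2)+(\text{terms with fewer root-adjacent edges})$, and then on the root decoration via $X^m \ast I_a(\omega)$.
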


\begin{proof}
We first note that $\mathfrak{B}$ generates all trees with undecorated root, which can be seen by an induction over the number of edge-adjacent roots. Note that
\begin{align*}
	I_a(\omega_1) \ast I_b(\omega_2)
	=I_a( (\omega_1)_{(1)} )\odot I_b( (\omega_1)_{(2)} \dgraftt \omega_2 ),
\end{align*}
which is $I_a(\omega_1)\odot I_b(\omega_2)$ plus terms with a low number of edge-adjacent roots. Next we note that we can generate any root decoration on these trees, again by induction:
\begin{align*}
	X^m \ast I_a(\omega)
	=\sum_{m=n_1+n_2} X^{n_1}I_a(\uparrow^{n_2}\omega  ),
\end{align*}
which is the tree with $m$ as root decoration plus trees with lower root decorations.
\end{proof}

\begin{lemma}
The space $\mathfrak{B}$ generates $(\mathfrak{T},\odot)$.
\end{lemma}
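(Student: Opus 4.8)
The plan is to mirror the proof of the preceding lemma for $\ast$, while exploiting that $\odot$ is the undeformed universal enveloping algebra product, which makes the argument strictly simpler. The backbone is Lemma \ref{Lemma::UniversalEnvelopingAlgebraIsTrees}: every element of $\mathfrak{T}$ is represented in the normal form $X^m \tau_1 \cdots \tau_k$, that is, a tree whose root carries a decoration $m \in \mathbb{N}^d$ and whose planted branches are $\tau_1, \dots, \tau_k$ read in planar order.

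First I would handle trees with undecorated root. Each branch $\tau_i$ is a planted tree, hence of the form $I_{a_i}(\omega_i)$ with $\omega_i \in \mathfrak{T}$, so $\tau_i \in \mathfrak{B}$. By Lemma \ref{Lemma::UniversalEnvelopingAlgebraIsTrees} the product $\odot$ of planted trees is realised by merging them at a common root, with the branches of the left factor placed to the left. Because $\odot$ carries no deformation, the iterated product $I_{a_1}(\omega_1) \odot \cdots \odot I_{a_k}(\omega_k)$ equals $\tau_1 \cdots \tau_k$ on the nose; this is precisely the point at which the $\ast$-argument had to absorb lower-order correction terms and run an induction, whereas here no correction appears. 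Thus every undecorated-root tree is a single $\odot$-product of elements of $\mathfrak{B}$.

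Next I would install arbitrary root decorations. Writing $m = i_1 + \cdots + i_l$ as a sum of unit vectors, each $X^{i_j} \in \mathfrak{B}$, and these single vertices commute under $\odot$. Invoking the representation once more, left multiplication by a single vertex $X^i$ merely raises the root decoration by $i$, with no further terms; the $\downarrow^i$ corrections recorded in Lemma \ref{Lemma::UniversalEnvelopingAlgebraIsTrees} only appear when an $X$ is carried to the right, which we never need to do. Hence $X^{i_1} \odot \cdots \odot X^{i_l} \odot I_{a_1}(\omega_1) \odot \cdots \odot I_{a_k}(\omega_k)$ reproduces $X^m \tau_1 \cdots \tau_k$ exactly, and $\mathfrak{B}$ generates $(\mathfrak{T}, \odot)$.

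The only point demanding care --- and the nearest thing to an obstacle --- is the order bookkeeping: all $X^i$ factors must be multiplied in on the left to avoid the $\downarrow^i = [\,\cdot\,, X^i]_0$ corrections, and the planted branches must be multiplied in their planar order. Both are immediate from the explicit description of $\odot$ in Lemma \ref{Lemma::UniversalEnvelopingAlgebraIsTrees}, so no genuine difficulty remains; the statement is in fact more direct than its $\ast$-counterpart, since the undeformed product never generates lower-order terms.
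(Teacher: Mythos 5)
Your proof is correct, but it takes a more hands-on route than the paper. The paper disposes of this lemma in one line: it observes that $\mathfrak{B}$ generates the Lie algebra $(\mathcal{V},[\cdot,\cdot]_0)$ (indeed $\mathcal{V}=\tilde{\mathcal{V}}\oplus\mathrm{span}(X^i)$ is already contained in $\mathfrak{B}$, since $\tilde{\mathcal{V}}$ is spanned by $I_a(\tau)$ for $\tau$ a Lie polynomial), and any set generating a Lie algebra generates its universal enveloping algebra $(\mathfrak{T},\odot)$ as an associative algebra. You instead bypass this abstract fact and verify generation directly on the normal forms $X^m\tau_1\cdots\tau_k$ supplied by Lemma \ref{Lemma::UniversalEnvelopingAlgebraIsTrees}: each branch $\tau_i=I_{a_i}(\omega_i)$ lies in $\mathfrak{B}$, merging at the root realises the undeformed product of planted trees with no correction terms, and multiplying the unit vertices $X^{i_j}$ in from the left installs the root decoration without triggering the $\downarrow^i$ commutator corrections. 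Both arguments are sound; yours buys an explicit factorisation of every basis tree as a single ordered $\odot$-product of elements of $\mathfrak{B}$ (and makes transparent why this case, unlike the $\ast$-case, needs no induction on lower-order terms), at the cost of redoing bookkeeping that the enveloping-algebra formalism gives for free. Your care about multiplying the $X^{i}$ factors only on the left is exactly the right point to flag, and is consistent with the representation $\tau X^i = X^i\tau + \downarrow^i\tau$ recorded in the paper.
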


\begin{proof}
It is clear that $\mathfrak{B}$ generates $(\mathcal{V},[\cdot,\cdot]_0)$, hence it must also generate $(\mathfrak{T},\odot)$.
\end{proof}

We can now define a map $\phi: \mathfrak{T} \to \mathfrak{T}$ to be the identity on $\mathfrak{B}$ and extending as an algebra morphism $\phi(\omega_1 \ast \omega_2)=\phi(\omega_1) \odot \phi(\omega_2)$.

\begin{proposition}
The map $\phi$ is a Hopf algebra isomorphism.
\end{proposition}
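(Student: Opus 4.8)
The plan is to reproduce, with the single-vertex trees as the new complication, the four-step argument used for $\varphi$ in Proposition \ref{Prop:HopfAlgebraMorphism}: namely that $\phi$ is well-defined, bijective, an algebra morphism by construction, and a coalgebra morphism by induction. The two preceding lemmas, asserting that $\mathfrak{B}$ generates both $(\mathfrak{T},\ast)$ and $(\mathfrak{T},\odot)$, are exactly what makes such an argument possible, since they let me reduce every claim about $\phi$ to a claim about its prescribed behaviour on $\mathfrak{B}$.

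For bijectivity I would exploit the triangular relationship between the two products recorded in the proofs of those lemmas. The identities $I_a(\omega_1)\ast I_b(\omega_2)=I_a(\omega_1)\odot I_b(\omega_2)+(\text{lower order})$ and $X^m\ast I_a(\omega)=X^m\odot I_a(\omega)+(\text{lower order})$ say that $\ast$ produces the corresponding $\odot$-product as its leading term, the corrections having either fewer edges adjacent to the root or a strictly smaller root decoration. Consequently, expanding any tree $\omega$ as a $\ast$-polynomial in $\mathfrak{B}$ and replacing $\ast$ by $\odot$ yields $\phi(\omega)=\omega+(\text{strictly lower order})$ within each homogeneous component for the total degree. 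This makes $\phi$ unitriangular, hence a linear bijection; surjectivity can alternatively be obtained by the same induction as in Proposition \ref{Prop:HopfAlgebraMorphism}.

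The genuinely new point — and the main obstacle — is well-definedness, because, in contrast to the Grossman--Larson case of Proposition \ref{Prop:HopfAlgebraMorphism} where freeness of $(\mathcal{OF},\ast)$ (Proposition \ref{prop::FreeAssociative}) made any assignment on generators extend automatically, $(\mathfrak{T},\ast)$ is not free: the single-vertex trees satisfy $X^i\ast X^j=X^{i+j}=X^j\ast X^i$. I would resolve this by presenting both products as enveloping algebras of their primitive spaces. Since $\ast$ and $\odot$ share the cocommutative coproduct $\Delta_{\shuffle}$, the space of primitives is the same vector space $\mathcal{V}$ for both, carrying the derived bracket $\llbracket\cdot,\cdot\rrbracket$ for $\ast$ (Theorem \ref{Theorem::GuinOudomIsomorphism}) and the bracket $[\cdot,\cdot]_0$ for $\odot$. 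All relations among the generators $\mathfrak{B}$ in $(\mathfrak{T},\ast)$ are therefore consequences of these brackets, and one checks that they are carried to the corresponding relations for $\odot$; in particular the only commutativity relation, $X^i\ast X^j=X^j\ast X^i$, maps to $X^i\odot X^j=X^j\odot X^i$, which holds because the $X^i$ commute in $\mathcal{U}(\mathcal{V})$ (Lemma \ref{Lemma::UniversalEnvelopingAlgebraIsTrees}). This guarantees that declaring $\phi$ to be the identity on $\mathfrak{B}$ extends consistently to an algebra morphism.

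Finally, to upgrade $\phi$ to a Hopf algebra isomorphism I would check that it intertwines the common coproduct. As in step (4) of Proposition \ref{Prop:HopfAlgebraMorphism}, both $\Delta_{\shuffle}\circ\phi$ and $(\phi\otimes\phi)\circ\Delta_{\shuffle}$ are algebra morphisms from $(\mathfrak{T},\ast)$ to $(\mathfrak{T}\otimes\mathfrak{T},\odot\otimes\odot)$, so it suffices to see that they agree on $\mathfrak{B}$; this is immediate on the primitive generators $X^i$ and follows for the $I_a(\tau)$ by induction on total degree, writing each such generator through the triangular expansion above and invoking the induction hypothesis on the lower-order terms. Compatibility with unit, counit and antipode is then automatic for a bijective bialgebra morphism of connected graded Hopf algebras, so $\phi$ is the desired Hopf algebra isomorphism.
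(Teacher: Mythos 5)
Your overall strategy is the paper's: its proof consists of the single remark that the argument of Proposition \ref{Prop:HopfAlgebraMorphism} (triangularity for injectivity, induction over the generators for surjectivity, multiplicativity by construction, and an induction for the coalgebra property) carries over, and your second and fourth paragraphs reproduce exactly those steps, using the same leading-term identities $I_a(\omega_1)\ast I_b(\omega_2)=I_a(\omega_1)\odot I_b(\omega_2)+(\text{lower order})$ and $X^m\ast I_a(\omega)=X^m\odot I_a(\omega)+(\text{lower order})$ from the two preceding lemmas. You are also right to isolate well-definedness as the genuinely new difficulty: in Proposition \ref{Prop:HopfAlgebraMorphism} the source $(\mathcal{OF}^{\mathcal{C}},\ast)$ is free associative (Proposition \ref{prop::FreeAssociative}), so prescribing the map on generators costs nothing, whereas $(\mathfrak{T},\ast)$ is explicitly not free. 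The paper passes over this point in silence.

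However, the argument you offer for well-definedness does not hold up as stated. You claim that since both products have the same primitives $\mathcal{V}$, all relations among the generators $\mathfrak{B}$ are bracket relations which "the identity" carries to the corresponding relations for $\odot$, the only one being $X^i\ast X^j=X^j\ast X^i$. But the two brackets on $\mathcal{V}$ are genuinely different, $\llbracket x,y\rrbracket=x\dgraftt y-y\dgraftt x+[x,y]_0\neq[x,y]_0$, so the relation $x\ast y-y\ast x=\llbracket x,y\rrbracket$ is not the relation $x\odot y-y\odot x=[x,y]_0$; correspondingly, if $\phi$ exists it satisfies $\phi(\llbracket I_a(\tau_1),I_b(\tau_2)\rrbracket)=[I_a(\tau_1),I_b(\tau_2)]_0$, i.e.\ $\phi$ is \emph{not} the identity on $\mathcal{V}$, only on the proper subspace $\mathfrak{B}$ (which omits the iterated brackets). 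Consequently the kernel of the free algebra on $\mathfrak{B}$ surjecting onto $(\mathfrak{T},\ast)$ is not visibly "the bracket relations plus commutation of the $X^i$", and the assertion that the commutation of the $X^i$'s is the only relation to check is precisely what would have to be proved. A sound version of your idea would be to define $\phi$ first on $\mathcal{V}$ (identity on $\mathfrak{B}$, extended through the brackets), verify that this is a Lie algebra morphism from $(\mathcal{V},\llbracket\cdot,\cdot\rrbracket)$ to the commutator Lie algebra of $(\mathfrak{T},\odot)$, and then invoke the universal property of $\mathcal{U}(\mathcal{V},\llbracket\cdot,\cdot\rrbracket)\cong(\mathfrak{T},\ast)$ from Theorem \ref{Theorem::GuinOudomIsomorphism}; as it stands this verification is missing. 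To be fair, this is a gap you inherit from the paper rather than introduce, and the remaining steps of your proposal are fine.
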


\begin{proof}
We need to check that $\phi$ is injective and that $\phi$ is a coalgebra morphism. The proof for Proposition \ref{Prop:HopfAlgebraMorphism} applies for these properties.
\end{proof}


\section{Conclusions}
\label{sec:concl}

The Munthe-Kaas--Wright Hopf algebra plays an important role in understanding differential equations on homogeneous spaces, via the notions of LB-series, planarly branched rough paths and regularity structures. These are, in explicit ways, generalizations of the Euclidean space notions of the Butcher--Connes--Kreimer Hopf algebra, B-series, branched rough paths and planar regularity structures. Via the explicit generalizations, many known results from the Euclidean space case can be transfered to their homogeneous space counterparts. The goal of this paper was to review the MKW Hopf algebra, and the transfer of results on the BCK Hopf algebra onto results on the MKW Hopf algebra. In doing so we recalled the Guin--Oudom construction, and how it induces both of the mentioned Hopf algebras. In surveying the Guin--Oudom construction, we discovered a cointeraction of the MKW Hopf algebra onto the shuffle Hopf algebra over the alphabet of planar rooted trees. We then recalled Foissy's results on the BCK Hopf algebra and its natural growth operator, seen from the perspective of any Hopf algebra with a natural growth operator. By then showing that the MKW Hopf algebra has a natural growth operator, we got to apply Foissy's results also in this case. This implies the important result that the MKW Hopf algebra is cofree, and is isomorphic to a shuffle Hopf algebra over a larger alphabet. The Hopf algebra isomorphism has consequences for planarly branched rough paths, which parallels known results for branched rough paths. We showed that any planarly branched rough path can equivalently be expressed as a geometric rough path. We furthermore showed that signatures of planarly branched rough paths are unique up to tree-like equivalence. Finally we took the theme of geometric embeddings to its limits by generalizing Bruned and Katsetsiadis' result on regularity structures. Unlike rough paths, the Hopf algebra for regularity structures is not isomorphic to a shuffle Hopf algebra. Rather, it is isomorphic to a quotient Hopf algebra of a shuffle Hopf algebra. We showed that the Hopf algebra for planar regularity structures is not isomorphic to a shuffle Hopf algebra, and is isomorphic to a quotient Hopf algebra of a shuffle Hopf algebra.


\bibliographystyle{acm}
\bibliography{PrimitiveReferences}
\end{document}